\documentclass[12pt]{amsart}
\usepackage{geometry}
\usepackage{microtype}
\usepackage{latexsym,amsthm,amssymb,amsmath,mathrsfs, dsfont}
\usepackage{enumerate}
\usepackage{xcolor}
\usepackage{bm}
\usepackage[T1]{fontenc}
\usepackage[anchorcolor=blue,%
 bookmarks=true,%
 bookmarksnumbered=true,%
]{hyperref}
\usepackage{graphicx}

\newtheorem{thm}{Theorem}[section]
\newtheorem{prop}[thm]{Proposition}
\newtheorem{cor}[thm]{Corollary}
\newtheorem{lem}[thm]{Lemma}

 \newtheorem*{ack}{Acknowledgment}
 
\newcommand{\thistheoremname}{}
\newtheorem*{genericthm*}{\thistheoremname}
\makeatletter
\newenvironment{namedthm*}[2]
  {\renewcommand{\thistheoremname}{#1}%
   \begin{genericthm*}%
   \def\@currentlabelname{#1}
   \textrm{$($}#2\textrm{$)$}}%
  {\end{genericthm*}}
\makeatother

\theoremstyle{definition}
\newtheorem{defn}[thm]{Definition}

\newtheorem{assumption}{Assumption}

\theoremstyle{remark}
\newtheorem{remark}[thm]{Remark}
\newenvironment{pf}{\par\begin{trivlist}%
\item[]{\bf Proof.}\ }{\hfill $\square$ \end{trivlist}\par}
\newenvironment{apf}[1]{\par\begin{trivlist}%
\item[]{\bf Proof of #1.}\ }{\hfill $\square$ \end{trivlist}\par}

\makeatletter \@addtoreset{equation}{section} \makeatother

\newcommand{\R}{\mathbb{R}}

\newcommand{\N}{\mathbb{N}}

\DeclareMathOperator{\Ric}{Ric}
\DeclareMathOperator{\Riem}{Riem}
\DeclareMathOperator{\diam}{diam}
\DeclareMathOperator{\Hess}{Hess}
\DeclareMathOperator{\Vol}{Vol}
\DeclareMathOperator{\sgn}{sgn}
\newcommand{\supp}{\mathop{\rm supp}\nolimits}
\newcommand{\lip}{\mathrm{Lip}}

\newcommand{\Div}{\operatorname{div}}

\newcommand{\e}{\mathrm{e}}

\renewcommand{\d}{\mathrm{d}}
\newcommand{\m}{\mathfrak{m} }

\newcommand{\s}{\mathfrak{s}}

\newcommand{\loc}{{\rm loc}}

\newcommand{\<}{\langle}
\renewcommand{\>}{\rangle}

\newcommand{\sobolevpower}{\widehat{p}}
\newcommand{\uppergradpower}{{p_0}}

\title[Morrey's inequalities on RCD]{On higher order Morrey's inequalities over RCD{\boldmath$(K,N)$}-spaces}
\author[]{
Jun Kitagawa}
\address{Department of Mathematics, Michigan State University, 619 Red Cedar Road
East Lansing, MI 48824, USA}
\email{kitagawa@math.msu.edu}

\author[]{Kazuhiro Kuwae}
\address{Department of Applied Mathematics, Fukuoka University,
Fukuoka 814-0180, Japan}
\email{kuwae@fukuoka-u.ac.jp}
\date{}

\keywords{optimal transport, RCD$(K,N)$-space, Morrey's inequalities}
\subjclass[2020]{
46E35, 
46E36,  
49Q22} 
\begin{document}

\begin{abstract}
In this paper, we establish a higher order Morrey's inequality in the framework of 
$\mathsf{RCD}(K,N)$-spaces
for $K\in\mathbb{R}$ and $N\in\mathbb{N}$. We do so by first introducing an alternate version of the second order Sobolev space $W^{2, p}(X)$, which contains amply many functions even when $p>N$.
\end{abstract}
\maketitle
\section{Introduction}\label{sec:Introduction}
Recall the classical Morrey's inequality on $\R^N$, which is the following: 
\begin{namedthm*}{Morrey's inequality}{\cite[pp.~266--269]{Evans}}
Let $U$ be a bounded open subset of $\R^N$ with $C^1$-boundary or $U=\R^N$.
Assume $u\in W^{1,p}(U)$ with $p>N$. Then there exists
$u^*\in C^{0,\gamma}(\overline{U})$, where $\gamma:=1-N/p$ with $u=u^*$ Lebesgue a.e., 
and $C=C(n,p,U)>0$ such that
\begin{align*}
\lVert u\rVert_{C^{0,\gamma}(\overline{U})}\leq C\lVert u\rVert_{W^{1,p}(U)}.
\end{align*}    
\end{namedthm*}
Moreover, there is the following higher order version:
\begin{namedthm*}{Higher order Morrey's inequality}{\cite[p.~270]{Evans}}
Let $U$ be a bounded domain in $\R^N$ with $C^1$-boundary.
Assume $u\in W^{k,p}(U)$ with $p>\frac{N}{k}$. Then 
there exists $u^*\in C^{k-\left[\frac{N}{p} \right]-1,\gamma}(\overline{U})$ where 
\begin{align*}
\gamma:=\left\{\begin{array}{lc}\left[ \frac{N}{p}\right]+1-\frac{N}{p}, & \frac{N}{p}\in\mathbb{Z}, \\ \text{any positive number}<1, & \frac{N}{p}\notin\mathbb{Z},\end{array}\right.
\end{align*}
with $u=u^*$ Lebesgue a.e., 
and there exists $C=C(k,N,p,U)>0$ such that
\begin{align*}
\lVert u\rVert_{C^{k-\left[\frac{N}{p}\right]-1,\gamma}(\overline{U})}\leq C\lVert u\rVert_{W^{k,p}(U)}.
\end{align*}
As a special case, if $u\in W^{2,p}(U)$ and $p>\frac{N}{2}$, then $u^*\in C^{1-\left[\frac{N}{p} \right],\gamma}(\overline{U})$ with $u=u^*$ Lebesgue a.e.,
and there exists $C=C(2,N,p,U)>0$ such that
\begin{align*}
\lVert u\rVert_{C^{1-\left[\frac{N}{p}\right],\gamma}(\overline{U})}\leq C\lVert u\rVert_{W^{2,p}(U)}.
\end{align*}
\end{namedthm*}
The above theorem has many applications in the theory of partial differential equations. For example, the classical Morrey's inequality ensures that 
any function in $W^{1,p}_{\rm loc}(U)$ is 
differentiable a.e.~in $U$, and its gradient coincides with the weak gradient a.e.~in $U$ (see \cite[p.~295, Theorem~5]{Evans}.
Moreover, the higher order version is used in a critical way in the theory of $L^p$-viscosity solutions (see \cite{CaffarelliCrandallKokanSwidgi}): in that setting test functions are taken in $W_{\loc}^{2,p}(U)$ where $p>N/2$, which are continuous by the higher order Morrey inequality. 

\medskip

In this paper, we establish a higher order Morrey's inequality for functions in second order Sobolev spaces in the framework of ${\sf RCD}(K,N)$-spaces. Here, 
${\sf RCD}(K,N)$-spaces are certain metric measure spaces  
having a notion of Ricci curvature bound from below by $K$ and dimension bounded from above by $N$ with an added Riemannian structure. The theory of 
$W^{1,p}(X)$-Sobolev spaces for $p\in\,]\,1,\,+\infty\,[$ over a metric measure space $(X,{\sf d},\m)$ was formulated by Ambrosio, Gigli, and Savar\'e (\cite{AGS_Sobolev}) in terms of $p$-weak upper gradients by using mass transport theory without assuming the ${\sf RCD}(K,N)$-structure.  On the other hand, Gigli~\cite{Gigli:NonSmoothDifferentialStr} established a theory of  
(non-smooth) second order calculus on ${\sf RCD}(K,N)$-spaces and defined the $W^{2,2}(X)$-Sobolev space over such a space $(X,{\sf d},\m)$ in terms of an object called the $L^2$-Hessian. 
For an ${\sf RCD}(K,N)$-space $(X,{\sf d},\m)$, there are a number of candidates for the
$W^{2,p}(X)$-Sobolev space for any $p\in\,]\,1,\,+\infty\,[$, but there may be a possibility 
that $W^{2,p}(X)$ consists of only constant functions if the underlying space $(X,{\sf d},\m)$ itself is too singular, which may be the case for certain Aleksandrov spaces. To avoid this trivial case, we introduce a 
new modified Sobolev space $W^{2,p}_*(X)$ containing $W^{2,p}(X)$ and prove a higher order version of Morrey's inequality for functions in $W^{2,p}_*(X)$ (see Theorem~\ref{thm: W2,p morrey}). 

\medskip

 The constitution of this paper is as follows: In 
 Section~\ref{sec:StatementMain}, we explain the basic framework and state our higher order Morrey estimate Theorem~\ref{thm: W2,p morrey}; we also include a heuristic discussion of various notions of ``second order Sobolev space.'' 
In Section~\ref{sec:1pSobolev}, we recall the framework of $(1,p)$-Sobolev spaces and $\mathsf{RCD}$ spaces in terms of the theory of optimal mass transport. 
In Section~\ref{sec: test functions}, we give the definition of the ``traditional'' $(2,p)$-Sobolev space $W^{\hspace{0.03cm}2,p}(X)$ in terms of the notion of $L^p$-normed $L^{\infty}$-module developed by Gigli~\cite{Gigli:NonSmoothDifferentialStr}, and introduce the ``alternative'' $(2,p)$-Sobolev space $W^{\hspace{0.03cm}2,p}_*(X)$ for $p\in\,[1,\,+\infty\,[$ over an $\mathsf{RCD}(K,N)$-space $(X,\mathsf{d},\m)$ and its subspace $H^{\hspace{0.03cm}2,p}(X)$. We endow $W^{\hspace{0.03cm}2,p}_{\ast}(X)$ with a norm and prove it yields a Banach space, and that $H^{2, p}(X)$ can be $1$-Lipschitz embedded into $W^{2, p}_*(X)$. In Section~\ref{sec: proof of main theorem} we first give the proof of Theorem~\ref{thm: W1p sobolev} which contains local first order Sobolev and Morrey inequalities based on the theory of Haj{\l}asz and Koskela developed in \cite{HK}. Based on these, we prove our second order Morrey inequality Theorem~\ref{thm: W2,p morrey}. 
Finally in Section~\ref{sec: W2,p functions}, we clarify that the modified Sobolev spaces $W^{\hspace{0.03cm}2,p}_{\ast}(X)$ and
$H^{\hspace{0.03cm}2,p}_{\ast}(X)$ are sufficiently ample, the former in general and the latter when 
$(X,\mathsf{d},\m)$ is a limit under the measured Gromov--Hausdorff convergence of a family of $N$-dimensional Riemannian manifolds satisfying certain uniform bounds on volume, Riemannian curvature tensors, and a few other geometric conditions.

\section{Statement of Main Theorem}\label{sec:StatementMain}

\subsection{Framework}\label{subsec:Frame}
Various terms and spaces will be defined precisely in Section~\ref{sec:1pSobolev}, but in particular, $W^{\hspace{0.03cm}2, p}_\ast(X)$ are the modified Sobolev spaces of higher order introduced in Definition~\ref{df:modifiedSobolev} below, defined for general $p\in \,[1,\,+\infty\,[$, and $\mathcal{UG}^p(f)$ denotes the set of $p$-weak upper gradients of a function $f$, given in Definition~\ref{def:Sobolev}. Now we can state our main theorem: 
\medskip
\begin{thm}[Higher order Morrey's inequality]\label{thm: W2,p morrey}
   Let $(X, {\sf d}, \m)$ be an $\mathsf{RCD}(K, N)$-space with $K\in \R$ and $N\in [\,1,\, +\infty\,[$. 
    Also, suppose that $f\in W^{\hspace{0.03cm}2, p}_\ast(X)$ with $p\in [\,1,\,+\infty\,[\,\cap\, ]\,N/2, \,+\infty\,[$. Then there exists a locally H{\"o}lder continuous function $f^*$ such that $f=f^*$ $\m$-a.e. 
    More precisely, we have the following:
    \begin{enumerate}[{\rm (1)}]
     \item\label{eq:Holder1} If $p>N$, then for any $G\in \mathcal{UG}^{\,p}(f)$, $x\in X$, $r\in ]0, R]$, 
     we have $G\in L^p(B_R(x))$
     and for any $y$, $z\in B_r(x)$,  
    \begin{align*}
        \lvert f^*(y)-f^*(z)\rvert \leq C_{N, K, x, R}\, r^{1-\frac{N}{p}}\left(\int_{B_R(x)}G^{\,p} \,\d\m\right)^{\frac{1}{p}}.
    \end{align*}
     \item\label{eq:Holder2} If $p=N$, then  for any   $q>N=p$, $G\in \mathcal{UG}^p(f)$, $x\in X$, $r\in ]0, R]$, we have $G\in L^q(B_R(x))$ and for any $y$, $z\in B_r(x)$, 
    \begin{align*}
        \lvert f^*(y)-f^*(z)\rvert \leq C_{N, K, x, k, R}\, r^{1-\frac{N}{q}}\left(\int_{B_R(x)}G^{\,q} \,\d\m\right)^{\frac{1}{q}}.
    \end{align*}
     \item\label{eq:Holder3} If $N/2<p<N$, then 
     for any $G\in \mathcal{UG}^p(f)$, $x\in X$, $r\in ]0, R]$, we have $G\in L^{\frac{Np}{N-p}}(B_R(x))$ and for any $y$, $z\in B_r(x)$, we have
    \begin{align*}
        \lvert f^*(y)-f^*(z)\rvert \leq C_{N, K, x, R}\, r^{2-\frac{N}{p}}\left(\int_{B_R(x)}G^{\frac{Np}{N-p}} \,\d\m\right)^{\frac{N-p}{Np}}.
    \end{align*}
    \end{enumerate}

\end{thm}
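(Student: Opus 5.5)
We will reduce everything to the local first order Sobolev and Morrey inequalities of Theorem~\ref{thm: W1p sobolev}, which come from the Haj{\l}asz--Koskela theory. On an $\mathsf{RCD}(K,N)$-space, the Bishop--Gromov inequality gives a local doubling condition. Its constants depend on $N$, $K$ and the ball $B_R(x)$, and it yields the lower mass bound $\m(B_r(y))\geq c_{N,K,x,R}\,(r/R)^N\m(B_R(x))$ for $y\in B_R(x)$ and $r\le R$. The $\mathsf{RCD}$ condition also gives a local $(1,1)$-Poincar\'e inequality. Together these are the structural hypotheses of \cite{HK}.

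The key input from the definition of $W^{2,p}_\ast(X)$ (Definition~\ref{df:modifiedSobolev}) is the following. For $f\in W^{2,p}_\ast(X)$ we have $f\in W^{1,p}(X)$. Moreover, every $G\in\mathcal{UG}^p(f)$ is controlled by a function that is itself Sobolev: $|\nabla f|\in W^{1,p}_{\loc}$, with upper gradient bounded by the $L^p$-Hessian-type term. In the smooth case this is Kato's inequality $|\nabla|\nabla f||\le|\Hess f|$. I expect this to be built into the definition of $W^{2,p}_\ast$, via some $G$ in $W^{1,p}$ dominating the minimal weak upper gradient. To pass to an arbitrary $G$, I will use the minimal $p$-weak upper gradient $|Df|\le G$ a.e. Any Morrey estimate proved for $|Df|$ then transfers to $G$, once we know $G$ has the stated integrability; this integrability is what each case asserts.

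Case (3), $N/2<p<N$. First apply the local Sobolev inequality of Theorem~\ref{thm: W1p sobolev} to the Sobolev function $|Df|$, or to the dominating $G$ from the definition. This gives $|Df|\in L^{p^\ast}(B_R(x))$ with $p^\ast=Np/(N-p)$. Since $p>N/2$, we get $p^\ast>N$. Now apply the local Morrey inequality to $f$ with exponent $p^\ast$. This yields a H\"older representative $f^\ast$ with
\[
|f^\ast(y)-f^\ast(z)|\le C\, r^{1-N/p^\ast}\Bigl(\int_{B_R(x)}G^{p^\ast}\,\d\m\Bigr)^{1/p^\ast}.
\]
The exponent is $1-N/p^\ast=1-(N-p)/p=2-N/p$, as stated.

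Case (2), $p=N$. The local Sobolev--Trudinger embedding (the $p=N$ part of Theorem~\ref{thm: W1p sobolev}, or Sobolev applied with a slightly smaller exponent $p'<N$ on the bounded ball) gives $|Df|\in L^q(B_R(x))$ for every finite $q$. We then take $q>N$ and apply Morrey as before.

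Case (1), $p>N$. Here we apply Morrey directly to $f\in W^{1,p}$ with $G\in L^p$. Local integrability of $G$ on balls holds because $G\in L^p(X)$.

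The representative $f^\ast$ is defined via Lebesgue points, i.e.\ limits of ball averages. The Morrey estimates show these limits exist everywhere, and $f^\ast$ is locally H\"older. Different balls give the same $f^\ast$ because it equals $f$ $\m$-a.e.

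The main obstacle is the first step: showing rigorously that an upper gradient of $f$ lies in $W^{1,p}_{\loc}$ with a suitable $L^p$ gradient bound. This is a non-smooth Kato inequality. I would first try to obtain it directly from the definition of $W^{2,p}_\ast$. Failing that, I would derive it from Gigli's Hessian bound $|D|\nabla f||\le|\Hess f|$ for $W^{2,2}$ functions, together with an approximation and lower semicontinuity argument. A secondary concern is keeping the constants' dependence on $x$ and $R$ under control through the local doubling constants.
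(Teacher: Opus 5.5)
Your route is the paper's: use the Sobolev part of Theorem~\ref{thm: W1p sobolev} (or its exponential part when $p=N$) to raise the integrability of a Sobolev upper gradient, then apply its Morrey part to $f$ with exponent $\frac{Np}{N-p}>N$ (resp.\ any $q>N$). Your bookkeeping $1-\frac{N}{p^\ast}=2-\frac{N}{p}$ is right. However, the argument is only correct in the form you list as an alternative, i.e.\ run with the $G\in\mathcal{UG}^{\,p}(f)\cap W^{1,p}(X)$ that Definition~\ref{df:modifiedSobolev} provides. Two of your claims need correcting.

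\emph{First}, your ``key input'' $\lvert Df\rvert\in W^{1,p}_{\loc}$ is false in general for $f\in W^{2,p}_\ast(X)$. The definition only requires \emph{some} upper gradient to be Sobolev. On a bounded interval, $f(x)=\lvert x-x_1\rvert+\lvert x-x_2\rvert$ has $G\equiv 2$ as a $W^{1,p}$ upper gradient, while $\lvert Df\rvert$ jumps between $0$ and $2$. This is exactly the example the paper uses to explain why $W^{2,p}_\ast$ is not defined through $\lvert D\lvert Df\rvert\rvert$. So the Sobolev step must be applied to the pair $(G,\lvert DG\rvert)$. The Morrey step is then applied to $(f,G)$, as the paper does, or to $(f,\lvert Df\rvert)$ via $\lvert Df\rvert\le G$.

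\emph{Second}, your ``main obstacle'' is not an obstacle. The existence of such a $G$ is the definition, so no non-smooth Kato inequality is needed. Kato enters the paper only in Proposition~\ref{prop: continuous embedding}, to embed $H^{2,p}(X)$ into $W^{2,p}_\ast(X)$. Your Hessian-based fallback could not prove the theorem anyway: elements of $W^{2,p}_\ast(X)$ need not have a Hessian, e.g.\ $\lvert x-x_0\rvert$ on a bounded Euclidean domain.

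Two smaller points. For $p=N$, your variant is valid and slightly simpler than the paper's exponential-integrability argument with $\Gamma$-function constants. It applies the Sobolev part to $(G,\lvert DG\rvert)$ on the ball with exponents $p'\in[1,N[$, lets $p'\uparrow N$, and uses H\"older for intermediate $q$. It does, however, need $N>1$; for $p=N=1$ you must use the exponential part, as the paper does.

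Also, the integrability claims can only be proved for this particular $G$, or for $\lvert Df\rvert$ by domination. They cannot hold for every element of $\mathcal{UG}^{\,p}(f)$, a set closed under adding arbitrary nonnegative Borel functions. So your case (1) justification ``$G\in L^p(X)$'' is wrong for a general $G$. For an arbitrary $G$, the inequality is trivial if the right-hand side is infinite. Otherwise it follows by applying the Morrey part of Theorem~\ref{thm: W1p sobolev} to $(f,G)$; the resulting continuous representative coincides with $f^\ast$ because $\m$ charges every ball. The paper's proof likewise only establishes the integrability for the $G$ given by the definition.
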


A few comments on the notion of ``$W^{2, p}$ Sobolev space'' when $p\neq 2$ are in order. When $p=2$, the theory established by Gigli via the language of $L^2$-normed $L^\infty$-modules yields an object that can rightly be called ``the Hessian'' of a function for some class of functions $W^{2, 2}(X)$ on an $\mathsf{RCD}(K, N)$ space (see~\cite[Chapters 4 and 6]{GPLecture} for further details). There are at least two natural candidates to extend this definition to spaces $W^{2, p}(X)$ for $p\neq 2$. The first is to define $\Hess f$ for a function $f\in W^{1, p}(X)$ as an element of the module $L^2((T^*)^{\otimes2}X)$ (as in \cite[Definition 6.2.6]{GPLecture}), then declare $f\in W^{2, p}(X)$ if the real valued function $\lvert \Hess f\rvert_{\rm HS}$ belongs to $L^p(X)$. However, under this approach it may not be possible to show (with the current technology) that $W^{2, p}(X)$ contains any elements besides constant functions, even in relatively nice cases, for example when $X$ is a non-collapsed $\mathsf{RCD}(K, N)$ space (as was communicated to the authors by Shouhei Honda). Indeed, in such a case, if $f\in W^{2, p}(X)$ belongs to the domain of the Laplacian (see~\eqref{eqn: Laplacian})
one finds that $\Delta f$, as the trace of $\Hess f$ $\m$-a.e. in a non-collapsed $\mathsf{RCD}(K, N)$ space $(X, {\sf d}, \m)$ by \cite[Theorem 1.5]{BrenaGigliHondaZhu23}, satisfies $\Delta f\in L^p(X)$. If $p>N$, by the local Morrey's inequality there exists a $\m$-version of $\lvert Df\rvert_p$ which is continuous, hence \cite[Theorem 1]{DePhilippisZimbron23} yields that $\lvert Df\rvert_p$ must be zero at any singular point of $(X, {\sf d}, \m)$. Since it is possible to construct non-collapsed $\mathsf{RCD}(K, N)$ spaces where the set of such points is dense (for example, \cite[p.~ 632, Examples (2)]{OtsuShioya94} yields an example with $K=0$ for any $N$; this example is even an Aleksandrov space), for such a space $W^{2, p}(X)\cap D(\Delta)$ will consist only of functions with $\lvert Df\rvert_p=0$ everywhere. By the so-called Sobolev-to-Lipschitz property (\cite[Theorem 6.2]{AGS_Riem}) valid in ${\mathsf{RCD}}(K, N)$ spaces, this implies such a function must be constant. In particular, the set ${\rm Test}^\infty(X)$ of \lq\lq test functions\rq\rq\ (see Definition~\ref{def:TestFunc}), is a subset of $D(\Delta)$ hence nonconstant elements of ${\rm Test}^\infty(X)$ cannot belong to $W^{2, p}(X)$ as above if $p>N$. This is in sharp contrast to the case $p=2$, where it is shown that ${\rm Test}^\infty(X)\subset W^{2, 2}(X)$ in \cite[Section 6.2.3]{GPLecture}.

The other natural candidate is to declare $\Hess f$ for a function $f$ to be an element of the module $L^p((T^*)^{\otimes2}X)$ satisfying \cite[Definition 6.2.6]{GPLecture} for a different, suitable class of test functions depending on the value of $p$. However, the tools used to obtain the inclusion ${\rm Test}^\infty(X)\subset W^{2, 2}(X)$ in \cite[Section 6.2.3]{GPLecture} are specific to $p=2$. Additionally, it is not at all clear if elements of $W^{2, p}(X)$, should they be defined this way, can be found even for spaces that are limits of extremely nice sequences of Riemannian manifolds.

Another, third approach would be to declare that a function $f$ belongs to the alternate space $W^{2, p}_\ast(X)$ if $f\in W^{1, p}(X)$ and $\lvert D\lvert Df\rvert_p\rvert_p\in L^p(X)$. It can be seen that such a characterization is strong enough to obtain the higher order Morrey's inequality Theorem~\ref{thm: W2,p morrey}, the resulting space contains the $W^{2, p}(X)$ defined according to the first proposed way above, and on spaces that are limits of certain sequences of Riemannian manifolds such a definition of $W^{2, p}_\ast(X)$ contains many functions. However, this approach has a major flaw in that the sum of two such functions may not satisfy the same condition: suppose $X\subset \R$ is a bounded interval containing two points $x_1$ and $x_2$, equipped with the usual Euclidean metric and Lebesgue measure, then $f_1(x)=\lvert x-x_1\rvert$ and $f_2(x)=\lvert x-x_2\rvert$ belong to $W^{1, p}(X)$ with $\lvert D\lvert Df_i\rvert_p\rvert_p\in L^p(X)$, but $\lvert D(f_1+f_2)\rvert$ contains jumps and is not weakly differentiable.

Thus, we have opted to take as an alternate space $W^{2, p}_*(X)$, the set of functions in $W^{1, p}(X)$ for which \emph{some} weak upper gradient belongs to $W^{1, p}(X)$. We show that it is possible to make this into a Banach space, and (a dense subspace of) the space $W^{2, p}(X)$ defined according to the first philosophy above can be realized as a $1$-Lipschitz embedding into our $W^{2, p}_\ast(X)$ (see Propositions~\ref{prop: Banach space} and~\ref{prop: continuous embedding}). Under this definition, $W^{2, p}_*(X)$ will contain many functions as we show in Section~\ref{sec: W2,p functions}; note that even in the simplest case of $X\subset\R^N$ with the usual Euclidean metric, the space $W^{2, p}_\ast(X)$ may \emph{not} be the same as $W^{2, p}(X)$, even for $p=2$: if $X$ is bounded and $x_0\in X$, the function $u(x)=\lvert x-x_0\rvert$ belongs to $W^{2, p}_\ast(X)$ as $u\in W^{1, p}(X)$ and $\lvert \nabla u(x)\rvert_p=\lvert \frac{x}{\lvert x\rvert}\rvert\equiv 1$, but $u$ is not weakly twice differentiable on $X$ hence it does not belong to $W^{2, p}(X)$. Nonetheless, the space will be strictly smaller than $W^{1, p}(X)$ in this case, see Section~\ref{sec: W2,p functions}.

We also mention some history of first order Morrey type inequalities. In the framework of complete smooth Riemannian manifolds with positive injectivity radius and bounded sectional curvature, 
Morrey's inequality was implicitly proved by Aubin~\cite[Subsection~2.23, Proof of the Sobolev embedding theorem~2.21]{Aubin}.  In the framework of $N$-dimensional Aleksandrov space with $N\in\mathbb{N}$ and the lower bound curvature  $K\in\mathbb{R}$, Morrey's inequality was shown by Kuwae--Machigashira--Shioya~\cite[Theorem~7.3]{KMS:lap}. 
In Haj{\l}asz--Koskela~\cite[Theorem~5.1(3)]{HK}, they proved a general Morrey's inequality in the framework of 
metric measure space admitting global volume doubling condition,  \cite[(21)]{HK} and $(1,p)$-Poincar\'e inequality in terms of upper gradients. 
Also in Alonso-Luiz--Baudoin~\cite[Theorem~5.9]{LuizBaudoin:Morrey}, 
a generalization of Morrey's inequality is proved in the framework of metric measure space satisfying a weak Bakry--\'Emery-type condition, but
\cite[Theorem~5.9]{LuizBaudoin:Morrey}, or \cite[Theorem~5.1(3)]{HK} requires a global volume
doubling condition as in Coulhon~\cite[(D)]{Coulhon}. 
\\
\quad
\\
\section{\texorpdfstring{$(1,p)$}{}-Sobolev spaces}\label{sec:1pSobolev}
\subsection{Notation and setup}\label{subsec:notation}

A \emph{metric measure space} is a triple $(X,{\sf d},\m)$ such that 
\begin{align*}
(X,{\sf d}):\quad &\text{ is a complete separable metric space},\\
\m\ne0:\quad&\text{ is a non-negative and boundedly finite Borel measure}.
\end{align*}
Any metric open ball will be denoted by $B_r(x):=\{y\in X\mid {\sf d}(x,y)<r\}$ for $r>0$ and $x\in X$; recall we assume $\m(B_r(x))>0$ for any open ball. 
For any $f\in L^p_{\loc}(X;\m)$, its support ${\rm supp}[f]$ is defined to be the support of the measure $\lvert f\rvert\m$, i.e. 
\begin{align*}
{\rm supp}[f]:=\left\{x\in X\;\left|\; \int_{B_r(x)}\lvert f\rvert\,\d\m>0\quad\text{ for any }\quad r>0\right.\right\}.
\end{align*} 
Furthermore, since it will be generally  fixed throughout, we will omit the measure $\m$ from the notation of $L^p$ 
spaces, writing simply $L^p(X)$. We will also have use for various Sobolev spaces whose definitions also depend on the reference measure $\m$, but we will also suppress this notation. However, we will make an exception in Section~\ref{sec: W2,p functions} where we will need to keep track of the relevant reference measures.

\subsection{{\boldmath$q$}-test plans and {\boldmath$(1,p)$}-Sobolev spaces}\label{subsec:$q$-test plan}
Here we recall the definition of $(1,p)$-Sobolev spaces proposed in \cite{AGS_Sobolev} (see also the original works \cite{Shanmugalingam:Newton} and \cite{Ch:metmeas} for earlier approaches). 

Denote by $C([0,1], X)$ the space of continuous curves in $X$ defined on the unit interval $[0,1]$, with the distance ${\sf d}_{\infty}(\gamma,\eta):=\sup_{t\in[0,1]}{\sf d}(\gamma_t,\eta_t)$ for every $\gamma,\eta\in C([0,1], X)$; this turns $C([0,1], X)$ into a complete separable metric space.   
For $q\in \,]1,\, +\infty\,]$ satisfying $1/p+1/q=1$, we consider the set of $q$-absolutely continuous curves, 
\begin{align*}
    &AC^{q}([0,1],X)\\
    :&=\left\{\gamma\in C([0,1], X)\;\left|\; \exists g\in L^{q}([0,1])\ s.t.\ {\sf d}(\gamma_t,\gamma_s)\leq\int_s^tg(r)\,\d r,\quad s<t\text{ in }[0,1]\right.\right\}.
\end{align*}
Recall that for any $\gamma\in AC^{q}([0,1],X)$, there exists a minimal (pointwise a.e.)~function $g\in L^{q}([0,1])$ satisfying the above, called the {\it metric speed}, denoted by $\lvert \dot\gamma\rvert$, which is defined as 
$\lvert \dot\gamma\rvert:=\lim_{h\downarrow0}{\sf d}(\gamma_{t+h}, \gamma_t)/h$ when this limit exists, 
$\lvert \dot\gamma\rvert:=+\infty$ otherwise. If $\gamma\in AC^{q}([0,1],X)$, the above limit exists for Lebesgue a.e. $t\in[0, 1]$, and the function $\gamma\mapsto \lvert \dot \gamma_t\rvert$ is Borel on $C([0, 1], X)$ (see \cite[Theorem 1.2.5 and Remark 1.2.6]{GPLecture}).
We then define the modified energy functional ${\sf Ke}_q^*$
by
\begin{align*}
    C([0,1], X)\ni\gamma\mapsto {{\sf Ke}_q^*(\gamma)}:=
    \begin{cases}
        {\lVert\lvert\dot{\gamma} \rvert\rVert_{L^{q}([0,1])
        }}
        , &\gamma\in AC^{q}([0,1],X),\\
        +\infty,&\text{else}.
    \end{cases}
\end{align*}
The kinetic energy functional ${\sf Ke}_q$ is given by 
${\sf Ke}_q(\gamma):={\sf Ke}_q^*(\gamma)^{q}$ when  $q\in\,]1,\,+\infty\,[$.  Also for every $t\in[0,1]$, the {\it evaluation map} at time $t$ is  
${\sf e}_t: C([0,1], X)\to X$, ${\sf e}_t(\gamma):=\gamma_t$ for $\gamma\in C([0,1], X)$. 
We easily see that ${\sf e}_t$ is a $1$-Lipschitz map.

\begin{defn}[{{\bf \boldmath$q$-test plan}}]\label{def:$q$-test}
Let $(X,{\sf d},\m)$ be a metric measure space and $q\in\,]1,+\infty]$.
A measure {\boldmath$\pi$}$\in\mathscr{P}(C([0,1], X))$ is said to be a $q$-test plan, provided 
\begin{enumerate}[(i)]
\item\label{item:qtest1} there exists $C>0$ so that $({\sf e}_t)_{\sharp}${\boldmath$\pi$}$\leq C\m$ for every $t\in[0,1]$;
\item\label{item:qtest2} 
\begin{align*}
\begin{cases}
        \int_{C([0,1], X)}{\sf Ke}_q(\gamma)\bm{\pi}(\d\gamma)<\infty,
        &q\in \,]1,+\infty\,[,\\
        {\rm Lip}(\cdot)\in L^{\infty}(C([0,1];X), 
\bm{\pi}),
&q=\infty,
    \end{cases}
\end{align*} 
\end{enumerate} 
 where 
\begin{align*}
{\rm Lip}(\gamma):=\sup_{t_1, t_2\in [0, 1], t_1\neq t_2}\frac{{\sf d}(\gamma_{t_1}, \gamma_{t_2})}{\lvert t_1-t_2\rvert}.
\end{align*}
\end{defn}
In the case $q=\infty$ above, it can be seen that~\eqref{item:qtest2} holds for every $\gamma$ in the support of {\boldmath$\pi$} by the lower semi continuity  of the global Lipschitz constant with respect to uniform convergence.
\begin{defn}[{{\bf \boldmath$p$-weak upper gradients and the Sobolev class $S^{\hspace{0.03cm}p}(X)$}}]\label{def:Sobolev}
Let $(X,{\sf d},\m)$ be a metric measure space and $p\in [\,1,\,+\infty\,[$.
For a Borel function $f$ on $X$, we say a nonnegative Borel $G$ on $X$ is a \emph{$p$-weak upper gradient of $f$} (written $G\in \mathcal{UG}^p(f)$) if 
\begin{align}
\int_{C([0,1], X)}\lvert f(\gamma_1)-f(\gamma_0)\rvert\text{\boldmath$\pi$}(\d\gamma)\leq \int_{C([0,1], X)}\int_0^1
G(\gamma_t)\lvert\dot{\gamma}_t\rvert\,\d t\,\text{\boldmath$\pi$}(\d\gamma),\quad \text{$\forall${\boldmath$\pi$}, 
$q$-test plan}. \label{eq:SpSobolev}
\end{align} 
If there exists some $G\in \mathcal{UG}^p(f)\cap L^p(X)$, we write $f\in S^{\hspace{0.03cm}p}(X)$.
We will also write  
\begin{align*}
 {\rm UG}^{\hspace{0.03cm}p}(f):=\inf\left\{\lVert G \rVert_{L^p(X)}\mid G\in \mathcal{UG}^{\hspace{0.03cm}p}(f){\cap L^p(X)} \right\}.
 \end{align*}
\end{defn}
 We note that, in contrast to the usual definition, we do not require a weak upper gradient $p$ to belong to $L^p(X)$. Also, regarding the well-definedness of Definition~\ref{def:Sobolev}: the assignment $(t,\gamma)\mapsto G(\gamma_t)\lvert\dot{\gamma}_t\rvert$ is Borel measurable (see \cite[Remark~2.1.9]{GPLecture}) when $G$ is Borel, and the right hand side of 
\eqref{eq:SpSobolev} is finite for $G\in L^p(X)$ (see \cite[(2.5)]{GigliNobili}). Thus, for $G\in \mathcal{UG}^p(f)\cap L^p(X)$, the right hand side of \eqref{eq:SpSobolev} is finite, 
and the assignment 
$L^p(X)\ni G\mapsto  \int_{C([0,1], X)}\int_0^1 G(\gamma_t)\lvert\dot{\gamma}_t\rvert\d t\text{\boldmath$\pi$}(\d\gamma)$ is continuous. Clearly convex combinations of $p$-weak upper gradients are also $p$-weak upper gradients, which shows that the set $\mathcal{UG}^p(f)\cap L^p(X)$ for a given Borel function $f$ is a closed, convex subset of $L^p(X)$. Thus if $f\in S^p(X)$, the \emph{minimal $p$-weak upper gradient of $f$}, denoted by $\lvert Df\rvert_p$ is the element of 
minimal $L^p$-norm in the set $\mathcal{UG}^p(f)\cap L^p(X)$. Various properties and existence of minimal $p$-weak upper gradients are detailed in \cite[Chapter 2.1]{GPLecture} when $p=2$ and when $p>1$ in 
\cite[Definition~2.4]{GigliNobili}.
However, an explicit treatment when $p=1$ seems to be missing from the literature, so for the sake of completeness we state here the following.
\begin{prop}\label{prop:minimalWeakUpper}
For $f\in S^{\hspace{0.03cm}1}(X)$, there exists a unique minimizer $G_f$ 
of ${\rm UG}^{\hspace{0.03cm}1}(f)$,
such that $G_f\leq G$ $\m$-a.e.~on $X$ for any $G\in \mathcal{UG}^{\hspace{0.03cm}1}(f)\cap L^1(X)$.  We define $\lvert Df\rvert_1:=G_f$ and call it the minimal $1$-weak upper gradient of $f\in S^{\hspace{0.03cm}1}(X)$.  
\end{prop}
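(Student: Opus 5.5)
The plan is to exploit the lattice structure of $\mathcal{UG}^{1}(f)$ instead of any reflexivity of $L^1(X)$, which is unavailable. The key intermediate claim is: if $G_1,G_2\in \mathcal{UG}^{1}(f)\cap L^1(X)$, then $\min\{G_1,G_2\}\in\mathcal{UG}^{1}(f)\cap L^1(X)$. Granting this claim, I would argue as follows.
\begin{enumerate}[(a)]
\item Choose a minimizing sequence $(G_n)\subset \mathcal{UG}^{1}(f)\cap L^1(X)$ with $\lVert G_n\rVert_{L^1(X)}\downarrow {\rm UG}^1(f)$.
\item Replace it by $H_n:=\min\{G_1,\dots,G_n\}$. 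Each $H_n$ is a $1$-weak upper gradient by the lattice claim, the sequence is pointwise nonincreasing, and $\lVert H_n\rVert_{L^1}\le\lVert G_n\rVert_{L^1}$.
\item By monotone (or dominated) convergence, $H_n\to H:=\inf_n H_n$ in $L^1(X)$.
\item Since $\mathcal{UG}^1(f)\cap L^1(X)$ is closed in $L^1(X)$ (noted before the proposition, via continuity of $G\mapsto\int\!\int_0^1 G(\gamma_t)\lvert\dot\gamma_t\rvert\,\d t\,\bm{\pi}(\d\gamma)$), we get $H\in\mathcal{UG}^1(f)$ with $\lVert H\rVert_{L^1}={\rm UG}^1(f)$.
\item For minimality, take any $G\in\mathcal{UG}^1(f)\cap L^1(X)$. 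Then $\min\{H,G\}$ is again admissible, so $\lVert\min\{H,G\}\rVert_{L^1}\ge \lVert H\rVert_{L^1}$. Since $\min\{H,G\}\le H$ pointwise, this forces $\min\{H,G\}=H$ $\m$-a.e., i.e.\ $H\le G$ $\m$-a.e.
\item Uniqueness is immediate: two minimizers are each a.e.\ below the other.
\end{enumerate}

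The main work, and the step I expect to be the obstacle, is the lattice claim for $p=1$, i.e.\ for $\infty$-test plans. I would first prove a curvewise characterization: $G\in\mathcal{UG}^1(f)$ (with $G\in L^1$) implies that, for every $\infty$-test plan $\bm{\pi}$ and $\bm{\pi}$-a.e.\ $\gamma$,
\[
\lvert f(\gamma_t)-f(\gamma_s)\rvert\le\int_s^t G(\gamma_r)\lvert\dot\gamma_r\rvert\,\d r\qquad\text{for all rational } s<t.
\]
To obtain this, I use two operations that preserve $\infty$-test plans:
\begin{enumerate}[(i)]
\item Restricting $\bm{\pi}$ to a Borel set $\Gamma$ of curves with $\bm{\pi}(\Gamma)>0$ and normalizing. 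The compression constant gets multiplied by $\bm{\pi}(\Gamma)^{-1}$, and the Lipschitz bound is unchanged.
\item Pushing forward by the restriction-reparametrization map ${\rm restr}_s^t(\gamma)_\tau:=\gamma_{(1-\tau)s+\tau t}$. The Lipschitz constants scale by $(t-s)$, and the marginals are still those of $\bm{\pi}$.
\end{enumerate}
Applying \eqref{eq:SpSobolev} to $({\rm restr}_s^t)_\sharp(\bm{\pi}|_\Gamma)$ for arbitrary Borel $\Gamma$, I get that the integrand inequality holds $\bm{\pi}$-a.e. for each fixed pair $s<t$, and hence simultaneously for all rational pairs by countability. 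Since $G\in L^1$ and $\lvert\dot\gamma\rvert$ is bounded, $r\mapsto G(\gamma_r)\lvert\dot\gamma_r\rvert$ is integrable for $\bm{\pi}$-a.e.\ $\gamma$ (using the compression bound and Fubini). Thus $f\circ\gamma$ agrees on rationals, and a.e.\ on $[0,1]$, with an absolutely continuous function $f_\gamma$ satisfying $\lvert f_\gamma'(r)\rvert\le G(\gamma_r)\lvert\dot\gamma_r\rvert$ for a.e.\ $r$. Some care is needed here: one should check that $f\circ\gamma=f_\gamma$ at $r=0,1$ for $\bm{\pi}$-a.e.\ $\gamma$. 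This follows since $0$ and $1$ are among the rationals used above.

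Applying this to both $G_1$ and $G_2$ along the same $\bm{\pi}$-a.e.\ curve gives $\lvert f_\gamma'(r)\rvert\le\min\{G_1,G_2\}(\gamma_r)\lvert\dot\gamma_r\rvert$ for a.e.\ $r$. Integrating over $[0,1]$ and then against $\bm{\pi}$ yields \eqref{eq:SpSobolev} for $\min\{G_1,G_2\}$, which proves the lattice claim and hence the proposition.
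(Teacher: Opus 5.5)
Your proposal is correct and follows essentially the same route as the paper: the lattice property for $\infty$-test plans (the paper only asserts that the curvewise characterization and lattice argument of \cite[Theorem~2.1.21 and Proposition~2.1.13]{GPLecture} carry over to $p=1$, which your restriction/reparametrization sketch spells out), a monotone minimizing sequence converging in $L^1(X)$, closedness (the paper invokes Lemma~\ref{lem:stability} with $f_n\equiv f$), and the $\min\{H,G\}$ argument for a.e.\ minimality and uniqueness. Two small points: make explicit that $f_\gamma$ is the continuous extension of $f\circ\gamma|_{\mathbb{Q}\cap[0,1]}$ and hence does not depend on $G$, which is what lets you combine the $G_1$ and $G_2$ bounds on the same curve; and the claim that $f\circ\gamma=f_\gamma$ a.e.\ on $[0,1]$ is never used, since only the endpoints $0,1$ matter.
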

The tools needed to prove Proposition~\ref{prop:minimalWeakUpper} are actually mostly already developed in \cite[Chapter 2.1]{GPLecture}. The key tool is the following lattice property.
\begin{lem}\label{lem:lattice}
    If $f\in S^{\hspace{0.03cm}1}(X)$ and $G_1$, $G_2\in \mathcal{UG}^{\hspace{0.03cm}1}(f)\cap L^1(X)$, then $G_1\land G_2\in \mathcal{UG}^{\hspace{0.03cm}1}(f)\cap L^1(X)$. 
\end{lem}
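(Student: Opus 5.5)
The claim is that the set of $1$-weak upper gradients of $f$ lying in $L^1(X)$ is closed under taking pointwise minima. My plan is to prove this by localizing each test plan to the region where $G_1\le G_2$ and to its complement, and then using that upper gradients behave well under restriction of curves. Set $A:=\{G_1\le G_2\}$, a Borel set, and $G:=G_1\land G_2=G_1\chi_A+G_2\chi_{X\setminus A}$. Clearly $G$ is nonnegative, Borel and in $L^1(X)$, so the only task is to verify \eqref{eq:SpSobolev} for $G$ against an arbitrary $\infty$-test plan $\bm{\pi}$. Here $q=\infty$ because $p=1$.

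The first step is to upgrade \eqref{eq:SpSobolev} from an integrated inequality to a curvewise one. I will show that for $G_i$, for $\bm{\pi}$-a.e.\ $\gamma$ and every $s<t$ in $[0,1]$,
\begin{align*}
\lvert f(\gamma_t)-f(\gamma_s)\rvert\le\int_s^t G_i(\gamma_r)\lvert\dot\gamma_r\rvert\,\d r .
\end{align*}
For this I apply \eqref{eq:SpSobolev} to the restricted and rescaled plans $({\rm Restr}_s^t)_\sharp(\chi_\Gamma\bm{\pi})/\bm{\pi}(\Gamma)$ for Borel $\Gamma\subset C([0,1],X)$ with $\bm{\pi}(\Gamma)>0$; these are still $\infty$-test plans. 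Since $\Gamma$ is arbitrary, this gives the inequality for each fixed pair $(s,t)$ for $\bm{\pi}$-a.e.\ $\gamma$. I then pass to all rational pairs by countability. This is exactly the argument of \cite[Chapter 2.1]{GPLecture} for $p=2$, and it does not use the exponent.

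To get all real $s<t$, I will use a modification trick. The function $r\mapsto \int_0^r G_i(\gamma)\lvert\dot\gamma\rvert$ is absolutely continuous for $\bm{\pi}$-a.e.\ $\gamma$, since $\int\!\int G_i(\gamma_r)\lvert\dot\gamma_r\rvert\,\d r\,\d\bm{\pi}\le C\,\lVert G_i\rVert_{L^1}\lVert{\rm Lip}\rVert_{L^\infty}$. Consequently $f\circ\gamma$ agrees a.e.\ in $r$ with an absolutely continuous function $F_\gamma$ satisfying $\lvert F_\gamma'\rvert\le G_i(\gamma_r)\lvert\dot\gamma_r\rvert$ for a.e.\ $r$. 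A Fubini argument with the marginal bound $({\sf e}_r)_\sharp\bm{\pi}\le C\m$ shows that $F_\gamma$ can be taken independent of $i$. Moreover, $F_\gamma(0)=f(\gamma_0)$ and $F_\gamma(1)=f(\gamma_1)$ $\bm{\pi}$-a.e., because the curvewise inequality also holds with $s=0$ and $t=1$ and these endpoints can be included among the countably many pairs.

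Combining the two bounds, for a.e.\ $r$ we get $\lvert F_\gamma'(r)\rvert\le \min_i G_i(\gamma_r)\lvert\dot\gamma_r\rvert=G(\gamma_r)\lvert\dot\gamma_r\rvert$. Therefore
\begin{align*}
\lvert f(\gamma_1)-f(\gamma_0)\rvert=\lvert F_\gamma(1)-F_\gamma(0)\rvert\le\int_0^1G(\gamma_r)\lvert\dot\gamma_r\rvert\,\d r
\end{align*}
for $\bm{\pi}$-a.e.\ $\gamma$. Integrating in $\bm{\pi}$ yields \eqref{eq:SpSobolev} for $G$.

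The main obstacle is the measure-theoretic step of choosing a single absolutely continuous representative $F_\gamma$ that works simultaneously for both $G_1$ and $G_2$ and matches $f$ at the endpoints. I expect to resolve it by noting that $F_\gamma$ is determined by the values $f(\gamma_r)$ on a dense countable set of times where $f\circ\gamma$ coincides with its continuous version. Such a set of times is common to both inequalities, since each holds off a $\bm{\pi}$-null set of curves for all rational pairs.
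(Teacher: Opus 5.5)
Your proposal is correct and follows essentially the paper's route. The paper asserts that the curvewise characterization of weak upper gradients in \cite[Theorem~2.1.21]{GPLecture} carries over to $p=1$, and then reuses the lattice argument of \cite[Proposition~2.1.13]{GPLecture}; you carry this out by hand via restricted test plans, the continuous extension of $f\circ\gamma$ from rational times (with $0$ and $1$ included so the endpoints match), and taking the minimum of the two derivative bounds. The common representative $F_\gamma$ is automatic from that construction, so the Fubini step and your opening remark about localizing plans to $\{G_1\le G_2\}$ are not needed.
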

\begin{proof}[{\bf Proof}]
It can be seen that the characterization in \cite[Theorem~2.1.21]{GPLecture} for $p=2$ carries over in the same manner for the case $p=1$ (in particular, it does not rely on existence of a minimal $p$-weak upper gradient). Then the proof of \cite[Proposition~2.1.13]{GPLecture} similarly carries over for $p=1$.
\end{proof}
\begin{lem}\label{lem:stability}
    Suppose $(f_n)\subset S^{\hspace{0.03cm}1}(X)$ converges pointwise $\m$-a.e.~to some Borel function $f: X\to\mathbb{R}$ as $n\to\infty$. Also let $G_n\in \mathcal{UG}^{\hspace{0.03cm}1}(f_n)\cap L^1(X)$ for each $n\in\mathbb{N}$ be such that $G_n\to G$ in $L^1(X)$ for some $G\in L^1(X)$. Then 
    $f\in S^{\hspace{0.03cm}1}(X)$ and $G\in\mathcal{UG}^{\hspace{0.03cm}1}(f)\cap L^1(X)$. 
\end{lem}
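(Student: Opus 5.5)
The plan is to pass to the limit directly in the defining inequality \eqref{eq:SpSobolev}, test plan by test plan. Fix an arbitrary $q$-test plan $\bm{\pi}$; since $p=1$, this means $q=\infty$. For each $n$ we have
\begin{align*}
\int_{C([0,1], X)}\lvert f_n(\gamma_1)-f_n(\gamma_0)\rvert\,\bm{\pi}(\d\gamma)\leq \int_{C([0,1], X)}\int_0^1 G_n(\gamma_t)\lvert\dot{\gamma}_t\rvert\,\d t\,\bm{\pi}(\d\gamma).
\end{align*}
It then suffices to show that the right-hand side converges to the same expression with $G$ in place of $G_n$, and that the left-hand side is asymptotically bounded below by the corresponding expression for $f$. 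Once this is done, $G\in\mathcal{UG}^1(f)\cap L^1(X)$, and hence $f\in S^1(X)$ by definition, since $G\in L^1(X)$ and $f$ is Borel.

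For the right-hand side, I would use the continuity noted after Definition~\ref{def:Sobolev}. Concretely, by Fubini, condition \eqref{item:qtest1}, and $\lvert\dot\gamma_t\rvert\le {\rm Lip}(\gamma)\le L$ for $\bm{\pi}$-a.e.\ $\gamma$,
\begin{align*}
\int\!\!\int_0^1 \lvert G_n-G\rvert(\gamma_t)\lvert\dot\gamma_t\rvert\,\d t\,\bm{\pi}(\d\gamma)\le L\int_0^1\int_X\lvert G_n-G\rvert\,\d({\sf e}_t)_\sharp\bm{\pi}\,\d t\le LC\lVert G_n-G\rVert_{L^1(X)}\to0.
\end{align*}
This is exactly where $q=\infty$ matters: the speed is bounded, so no H\"older pairing is needed.

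For the left-hand side, I would use Fatou's lemma. Let $N\subset X$ be the $\m$-null set where $f_n\not\to f$. Then $({\sf e}_i)_\sharp\bm{\pi}(N)\le C\m(N)=0$ for $i=0,1$, so for $\bm{\pi}$-a.e.\ $\gamma$ both $\gamma_0,\gamma_1\notin N$ and $\lvert f_n(\gamma_1)-f_n(\gamma_0)\rvert\to\lvert f(\gamma_1)-f(\gamma_0)\rvert$. Fatou's lemma gives
\begin{align*}
\int\lvert f(\gamma_1)-f(\gamma_0)\rvert\,\d\bm{\pi}\le\liminf_n\int\lvert f_n(\gamma_1)-f_n(\gamma_0)\rvert\,\d\bm{\pi}.
\end{align*}
Combining the two limits yields \eqref{eq:SpSobolev} for $G$.

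The only delicate point is the null-set issue: a pointwise $\m$-a.e.\ limit must be evaluated at the endpoints of curves. This is resolved by the compression bound $({\sf e}_t)_\sharp\bm{\pi}\le C\m$. A minor measurability point also needs checking, namely that $\gamma\mapsto f(\gamma_i)$ is Borel; this holds because $f$ is Borel and ${\sf e}_i$ is continuous.
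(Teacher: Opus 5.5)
Your proof is correct and follows essentially the same route as the paper, which defers to the argument of \cite[Proposition~2.1.13]{GPLecture}: fix a test plan, use the compression bound $({\sf e}_i)_\sharp\bm{\pi}\le C\m$ with Fatou's lemma on the left-hand side, and use $L^1$-continuity of $G\mapsto\iint G(\gamma_t)\lvert\dot\gamma_t\rvert\,\d t\,\d\bm{\pi}$ on the right, which for $p=1$ is exactly your bound via ${\rm Lip}(\gamma)\le L$; since strong $L^1$ convergence is assumed, no Mazur-type reduction is needed. One small detail to add is that $G$ should be replaced by a nonnegative Borel representative. This is harmless because $G\ge 0$ $\m$-a.e.\ as an $L^1$-limit of the $G_n$, and $({\sf e}_t)_\sharp\bm{\pi}\ll\m$.
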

\begin{proof}[{\bf Proof}]
As above, the proof is quite similar to the proof of \cite[Proposition~2.1.13]{GPLecture} hence we omit it. 
\end{proof}
 \begin{proof}[\bf Proof of Proposition~\ref{prop:minimalWeakUpper}]
 Let $\{G_n\}\subset \mathcal{UG}^{\hspace{0.03cm}1}(f)\cap L^1(X)$ be a minimizing sequence for ${\rm UG}^{\hspace{0.03cm}1}(f)$, then by Lemma~\ref{lem:lattice}, we may assume $G_n\geq G_{n+1}$ $\m$-a.e. for any $n\in\mathbb{N}$. Let $G_f:=\inf_{n\in\mathbb{N}}G_n$, then $G_n\to G_f$ as $n\to\infty$ 
 $\m$-a.e.~and $G_n\to G_f$ in $L^1(X)$ by Lebesgue's dominated convergence theorem. By applying Lemma~\ref{lem:stability}, we have $G_f\in \mathcal{UG}^{\hspace{0.03cm}1}(f)\cap L^1(X)$. From this, 
 $G_f$ is a minimizer of ${\rm UG}^{\hspace{0.03cm}1}(f)$, i.e., $\lVert G_f \rVert_{L^1}={\rm UG}^{\hspace{0.03cm}1}(f)$. 
 Next we prove that 
 $G_f\leq G$ for any $G\in\mathcal{UG}^{\hspace{0.03cm}1}(f)\cap L^1(X)$. 
 By contradiction, suppose  
 there exists some $G\in \mathcal{UG}^{\hspace{0.03cm}1}(f)\cap L^1(X)$ such that $\m(\{G<G_f\})>0$. In particular, the function $G\land G_f$, which has an $L^1(X)$-norm that is strictly smaller than $\lVert G_f \rVert_{L^1}$, is a $1$-weak upper gradient of $f$ by Lemma~\ref{lem:lattice}, a contradiction. Finally, if both $G_1$ and $G_2$ are 
 minimizers of ${\rm UG}^{\hspace{0.03cm}1}(f)$, then $G_1\leq G_2$ and $G_2\leq G_1$ hold $\m$-a.e. by the previous argument, hence $G_1=G_2$ $\m$-a.e. 
 \end{proof}

\begin{defn}[Sobolev space $W^{\hspace{0.03cm}1,p}(X)$]\label{def:W1pSobolev}
Let $(X,{\sf d},\m)$ be a metric measure space and $p\in [\,1,\,+\infty\,[$. The Sobolev space,  
denoted by $W^{\hspace{0.03cm}1,p}(X)$, is $L^p(X)\cap S^{\hspace{0.03cm}p}(X)$ as a set, equipped with the norm 
\begin{align*}
\lVert f\lVert_{W^{\hspace{0.03cm}1,p}(X)}:=\left(\lVert f\lVert^p_{L^p(X)}+\| \lvert Df\rvert_p\|^p_{L^p(X)} \right)^{\frac{1}{p}} ,\quad f\in  W^{\hspace{0.03cm}1,p}(X). 
\end{align*}
We will also write $W^{\hspace{0.03cm}1,p}(X)_{\rm bs}:=\{f\in W^{\hspace{0.03cm}1,p}(X)\mid {\rm supp}[f]\text{ is bounded}\}$.
\end{defn}
It is a standard fact that $(W^{\hspace{0.03cm}1,p}(X),\lVert \cdot \lVert_{W^{\hspace{0.03cm}1,p}(X)})$ is a Banach space for any $p\in [\,1,\, +\infty\,[$ in view of the 
same proof as in 
 \cite[Theorem~2.1.17]{GPLecture} for $p=2$ by applying Lemma~\ref{lem:stability}. 
It should be noted that $W^{\hspace{0.03cm}1,p}(X)\cap L^{\infty}(X)$ is an algebra. 
It is in general false 
that $(W^{\hspace{0.03cm}1,p}(X),\lVert \cdot \lVert_{W^{\hspace{0.03cm}1,p}(X)})$ is reflexive and 
$p=2$ does not imply that $(W^{\hspace{0.03cm}1,2}(X),\lVert \cdot \lVert_{W^{\hspace{0.03cm}1,2}(X)})$ is a Hilbert space. 
In the case it is a Hilbert space, we say that $(X,{\sf d},\m)$ is {\it infinitesimally Hilbertian} (see 
\cite{Gigli:OntheDifferentialStr}). Equivalently, we call $(X,{\sf d},\m)$  infinitesimally Hilbertian 
provided the following {\it parallelogram identity} holds:
\begin{align}
2\lvert Df\rvert_2^2+2\lvert Dg\rvert_2^2=\lvert D(f+g)\rvert_2^2+\lvert D(f-g)\rvert_2^2, \quad \m\text{-a.e.} \quad \forall f,g\in W^{\hspace{0.03cm}1,2}(X);\label{eq:parallelogram}
\end{align}  
 this allows for a bilinear form $\<D\cdot ,D\cdot\>:W^{\hspace{0.03cm}1,2}(X)\times W^{\hspace{0.03cm}1,2}(X)\to L^1(X)$ defined by 
\begin{align*}
\<Df,Dg\>:=\frac14\lvert D(f+g)\rvert_2^2-\frac14\lvert D(f-g)\rvert_2^2,\quad f,g\in W^{\hspace{0.03cm}1,2}(X).
\end{align*}
Moreover, under~\eqref{eq:parallelogram}
the bilinear form $(\mathscr{E},D(\mathscr{E}))$ defined by 
\begin{align*}
D(\mathscr{E}):=W^{\hspace{0.03cm}1,2}(X),\quad \mathscr{E}(f,g):=\int_X\<Df,Dg\>\d \m
\end{align*}
is a strongly local Dirichlet form on $L^2(X)$ by \cite[Theorem~2.1.28]{GPLecture}.

Let $\Delta$ be the 
$L^2$-generator associated with $(\mathscr{E}, D(\mathscr{E}))$ defined by 
\begin{align}
\begin{split}
    D(\Delta):&=\{u\in D(\mathscr{E})\mid \text{ there exists }w\in L^2(X)\text{ such that }\\
&\hspace{3cm}\mathscr{E}(u,v)=-\int_X wv\,\d\m\quad\text{ for any }v\in D(\mathscr{E})\},\\
\Delta u:&=w \text{ for }u\in D(\Delta)\text{ and }w\in L^2(X)\text{ satisfying the relation above, }
\end{split}\label{eqn: Laplacian}
\end{align}
which is called the \emph{Laplacian} or \emph{$L^2$-Laplacian} associated with $W^{\hspace{0.03cm}1,2}(X)$. We also denote the \emph{measure valued Laplacian} by $\bm{\Delta}$ where
\begin{align*}
\begin{split}
    D(\bm{\Delta}):&=\{u\in S^p(X)\text{ some }p>1\mid \text{ there exists a Radon measure }\\
&\hspace{2cm}\mu\text{ such that }\mathscr{E}(u,v)=-\int_X v\,\d\mu\,\quad\text{ for any }v\in \lip_{\rm bs}(X)\},\\
\bm{\Delta} u:&=\mu \text{ for }u\in D(\bm{\Delta})\text{ and a Radon measure }\mu\in L^2(X)\text{ satisfying the relation above.}
\end{split}
\end{align*}

\begin{remark}
The above definition for $W^{\hspace{0.03cm}1,p}(X)$ is based on optimal transport theory. 
There are various way to define $(1,p)$-Sobolev spaces by, for example, Cheeger or Shanmugalingam; for general $p\in\,]1,\,+\infty\,[$, they are equivalent to each other (see \cite[Remark~2.2.27 and Theorem~2.2.28]{GPLecture} for the case $p=2$ and 
\cite[Theorem~7.4]{AGS_Sobolev} for general $p\in\,]1,\,+\infty\,[$).   
\end{remark}

We also recall the following $p$-Clarkson inequality, and some consequences regarding uniqueness of minimizers of certain functionals.
\begin{prop}[{cf.~\cite[Theorem~1.7]{KwSobolev}}]
Let $(X,{\sf d},\m)$ satisfy 
the infinitesimally Hilbertian condition.
Fix $p\in\,]\,1,\,+\infty\,[$.
Then $(W^{1,p}(X),\lVert \cdot \lVert_{W^{1,p}(X)})$ satisfies the $p$-Clarkson inequality in the following sense: for any $f$, $g\in W^{1, p}(X)$,
\begin{align*}
&p\in[\,2,\,+\infty\,[\Rightarrow 
\left \lVert\frac{f+g}{2} \right \lVert_{W^{1,p}(X)}^p+
\left \lVert\frac{f-g}{2} \right \lVert_{W^{1,p}(X)}^p\leq
\frac12
\left \lVert f \right \lVert_{W^{1,p}(X)}^p
+
\frac12
\left \lVert g \right \lVert_{W^{1,p}(X)}^p,
\\
&p\in\,]\,1,\,2\,[\Rightarrow
\left \lVert\frac{f+g}{2} \right \lVert_{W^{1,p}(X)}^q+
\left \lVert\frac{f-g}{2} \right \lVert_{W^{1,p}(X)}^q\leq
\left(\frac12
\left \lVert f \right \lVert_{W^{1,p}(X)}^p
+
\frac12
\left \lVert g \right \lVert_{W^{1,p}(X)}^p\right)^{\frac{q}{p}},
\end{align*}
where $q=p/(p-1)$ is the conjugate exponent of $p$.
\end{prop}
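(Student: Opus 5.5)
The plan is to realize $W^{1,p}(X)$ isometrically as a closed subspace of an $L^p$-space of functions with values in (measurable fields of) Hilbert spaces. Once this is done, both Clarkson inequalities follow from the classical pointwise Clarkson inequalities in Hilbert spaces, integrated over the base. Concretely, I would use Gigli's $L^p$-normed cotangent module $L^p(T^*X)$ and its differential $\d\colon S^p(X)\to L^p(T^*X)$. The module satisfies $\lvert \d f\rvert=\lvert Df\rvert_p$ $\m$-a.e. and is linear in $f$, so that $\lvert D(f\pm g)\rvert_p=\lvert \d f\pm \d g\rvert$ $\m$-a.e.

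\textbf{Step 1 (the main obstacle).} Show that, under infinitesimal Hilbertianity, the pointwise norm of $L^p(T^*X)$ satisfies the parallelogram identity
\begin{align*}
\lvert D(f+g)\rvert_p^2+\lvert D(f-g)\rvert_p^2=2\lvert Df\rvert_p^2+2\lvert Dg\rvert_p^2\quad \m\text{-a.e.}
\end{align*}
for all $f,g\in W^{1,p}(X)$. Infinitesimal Hilbertianity is phrased through $\lvert\cdot\rvert_2$, so this is the delicate point. I would first reduce to $f,g\in W^{1,p}\cap W^{1,2}$ with bounded support (Lipschitz truncations and cut-offs are dense, and both sides are stable under the convergence given by Lemma~\ref{lem:stability} and its $p>1$ analogue). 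On that class I would invoke the identification $\lvert Df\rvert_p=\lvert Df\rvert_2$ $\m$-a.e., which is available on the spaces in question, e.g.\ $\mathsf{RCD}$ spaces, or more generally when minimal weak upper gradients are independent of $p$. The parallelogram identity~\eqref{eq:parallelogram} then transfers directly, and it passes to the $L^p$-closure. As a consequence, the fibers of $L^p(T^*X)$ carry $\m$-a.e.\ an inner product inducing $\lvert\cdot\rvert$, i.e.\ the module is a Hilbert module.

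\textbf{Step 2.} Let $\widetilde X:=X\sqcup X$ with measure $\m\sqcup\m$. Map $f\mapsto \Phi f$, where $\Phi f:=f$ on the first copy (scalar valued) and $\Phi f:=\d f$ on the second copy (valued in the Hilbert fibers). This map is linear, and since
\begin{align*}
\int_{\widetilde X}\lvert\Phi f\rvert^p=\lVert f\rVert_{L^p}^p+\lVert \lvert Df\rvert_p\rVert_{L^p}^p,
\end{align*}
it is an isometry onto its image. Hence it suffices to prove the two Clarkson inequalities for $L^p$ of a measurable field of Hilbert spaces, applied to $F=\Phi f$ and $G=\Phi g$.

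\textbf{Step 3.} For $p\ge 2$, I would use the Hilbert-space inequality
\begin{align*}
\left\lvert\tfrac{a+b}{2}\right\rvert^p+\left\lvert\tfrac{a-b}{2}\right\rvert^p\le \tfrac12\lvert a\rvert^p+\tfrac12\lvert b\rvert^p .
\end{align*}
This follows from the parallelogram law together with $s^{p/2}+t^{p/2}\le (s+t)^{p/2}$ for $p/2\ge 1$, and convexity of $s\mapsto s^{p/2}$. Applying it pointwise and integrating over $\widetilde X$ gives the first inequality.

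For $1<p<2$, I would follow Clarkson's original argument. The pointwise inequality
\begin{align*}
\left\lvert\tfrac{a+b}{2}\right\rvert^q+\left\lvert\tfrac{a-b}{2}\right\rvert^q\le\left(\tfrac12\lvert a\rvert^p+\tfrac12\lvert b\rvert^p\right)^{q-1}
\end{align*}
holds in Hilbert spaces, by reducing to the two-dimensional span of $a,b$ and then to the scalar/complex case. I would then apply the reverse Minkowski inequality in $L^{p-1}$, valid since $0<p-1<1$, to the functions $\lvert (F\pm G)/2\rvert^q$. Integrating yields
\begin{align*}
\left\lVert \tfrac{F+G}{2}\right\rVert^q+\left\lVert \tfrac{F-G}{2}\right\rVert^q\le \left(\tfrac12\lVert F\rVert^p+\tfrac12\lVert G\rVert^p\right)^{q/p},
\end{align*}
which is the second inequality.

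Everything beyond Step 1 is classical. I expect the real work to lie in justifying $\lvert Df\rvert_p=\lvert Df\rvert_2$, or directly the Hilbertianity of $L^p(T^*X)$, for $p\neq 2$.
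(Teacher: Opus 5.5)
\textbf{Gap in Step 1.} The gap is where you put it, and it cannot be closed under the stated hypothesis. Infinitesimal Hilbertianity concerns $\lvert D\cdot\rvert_2$ only. On a general metric measure space minimal weak upper gradients depend on $p$, so nothing transfers the parallelogram law to $\lvert D\cdot\rvert_p$.

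In fact the proposition appears to fail without more; here is a sketch. Take $\R^n$, $n\ge 2$, with the $\ell^\infty$-distance and $\m=w\mathcal{L}^n$, where $w$ is a Di Marino--Speight weight: non-constant curves have zero $p$-modulus for $p\le 2$, while $w\in A_p$ for $p>2$. Then every $2$-test plan is concentrated on constant curves. Hence $\lvert Df\rvert_2\equiv0$, $W^{1,2}(X)=L^2(X)$ isometrically, and the space is infinitesimally Hilbertian.

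For $p=3$ the space is doubling with a $(1,3)$-Poincar\'e inequality. Cheeger's theorem then gives $\lvert Df\rvert_3=\lVert\nabla f\rVert_{\ell^1}$ for Lipschitz $f$. Let $s$ be a $1$-periodic sawtooth with $\lvert s'\rvert=1$ and $\chi$ a cut-off, and set $f_\varepsilon=\varepsilon s(x_1/\varepsilon)\chi$, $g_\varepsilon=\varepsilon s(x_2/\varepsilon)\chi$. As $\varepsilon\to0$, the left side of the first Clarkson inequality tends to $2\int\chi^3\d\m$ and the right side to $\int\chi^3\d\m$.

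So the correct statement needs an extra hypothesis such as independence of $\lvert Df\rvert_p$ on $p$. This holds on $\mathsf{RCD}(K,\infty)$ spaces by Gigli--Han (Remark~\ref{rem: RCD prop}). It is also the only setting in which the paper uses the result, via Corollary~\ref{cor:minimizer} on $\mathsf{RCD}(K,N)$ spaces. The paper itself gives no argument here beyond pointing to Kuwae's Theorem~1.7.

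\textbf{Under that hypothesis.} Your proposal is then correct. Steps 2--3 are Clarkson's classical argument: pointwise Hilbert-space inequalities, then reverse Minkowski in $L^{p-1}$ for $1<p<2$. You do not even need the cotangent module. The four functions $\lvert Df\rvert$, $\lvert Dg\rvert$, $\lvert D(f\pm g)\rvert$ satisfy the parallelogram law and the triangle inequality, so they are realized pointwise by vectors in $\R^2$.

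One point in your reduction needs repair. Lemma~\ref{lem:stability} only gives one-sided bounds, so it cannot transport a pointwise identity. What you need is $\lVert\lvert D(f_n-f)\rvert\rVert_{L^p}\to0$; then $\lvert\,\lvert Du\rvert-\lvert Dv\rvert\,\rvert\le\lvert D(u-v)\rvert$ gives $L^p$-convergence of all four terms.
\begin{itemize}
\item For $p\ge2$, cut-offs give this as in Lemma~\ref{lem:denseness}, since $W^{1,p}(X)_{\rm bs}\subset W^{1,2}(X)$.
\item For $1<p<2$, density in energy is not enough, and norm density of $W^{1,p}\cap W^{1,2}$ is not available off the shelf. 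On an $\mathsf{RCD}(K,N)$ space, the clean fix is to avoid limits altogether. Replace $f,g$ by Lipschitz functions agreeing with them off a set of small measure (Lusin--Lipschitz approximation in the locally doubling--Poincar\'e setting), and use locality of $\lvert D\cdot\rvert_p$.
\end{itemize}
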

\begin{cor}\label{cor:minimizer}
    Let $(X,{\sf d},\m)$ satisfy 
the infinitesimally Hilbertian condition and suppose $p\in\,]\,1,\,+\infty\,[$. 
For any closed, convex subset $\mathscr{C}$ of 
$(W^{1,p}(X),\lVert \cdot \lVert_{W^{1,p}(X)})$, there exists a unique $f_{\mathscr{C}}\in \mathscr{C}$ such that 
\begin{align*}
\lVert f_{\mathscr{C}}\lVert_{W^{1,p}(X)}^p=\inf\{\
\lVert f\lVert_{W^{1,p}(X)}^p\mid f\in\mathscr{C}
 \}.
\end{align*}
\end{cor}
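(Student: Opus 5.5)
The plan is the standard direct method in a uniformly convex Banach space. By the $p$-Clarkson inequality in the preceding proposition, $(W^{1,p}(X),\lVert\cdot\rVert_{W^{1,p}(X)})$ is uniformly convex. This holds in both regimes $p\ge 2$ and $1<p<2$, via the usual Clarkson-type argument.

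First, existence. Set $d:=\inf\{\lVert f\rVert_{W^{1,p}(X)}\mid f\in\mathscr{C}\}$ and take a minimizing sequence $(f_n)\subset\mathscr{C}$. Since $\mathscr{C}$ is convex, $(f_n+f_m)/2\in\mathscr{C}$, so $\lVert (f_n+f_m)/2\rVert\ge d$.

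For $p\ge2$, the Clarkson inequality gives
\begin{align*}
\left\lVert\frac{f_n-f_m}{2}\right\rVert^p_{W^{1,p}(X)}\le \frac12\lVert f_n\rVert^p_{W^{1,p}(X)}+\frac12\lVert f_m\rVert^p_{W^{1,p}(X)}-d^p\longrightarrow 0 .
\end{align*}

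For $1<p<2$, it gives
\begin{align*}
\left\lVert\frac{f_n-f_m}{2}\right\rVert^q_{W^{1,p}(X)}\le\left(\frac12\lVert f_n\rVert^p_{W^{1,p}(X)}+\frac12\lVert f_m\rVert^p_{W^{1,p}(X)}\right)^{q/p}-d^q\longrightarrow 0 .
\end{align*}
The right-hand side tends to $(d^p)^{q/p}-d^q=0$.

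In either case $(f_n)$ is Cauchy. Since $W^{1,p}(X)$ is a Banach space (noted after Definition~\ref{def:W1pSobolev}), $f_n\to f_{\mathscr{C}}$ in norm. Because $\mathscr{C}$ is closed, $f_{\mathscr{C}}\in\mathscr{C}$, and continuity of the norm gives $\lVert f_{\mathscr{C}}\rVert=d$.

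Second, uniqueness. If $f,g\in\mathscr{C}$ are two minimizers, apply the same inequality to the pair. Since $(f+g)/2\in\mathscr{C}$ has norm at least $d$, we get $\lVert (f-g)/2\rVert\le 0$, so $f=g$. (The case $d=0$ is trivial, since then the minimizer must be $0$.)

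The only delicate point is the case $1<p<2$, where the exponents $p$ and $q$ are mixed. There I would simply pass to the limit in the displayed inequality as above, and nothing further is needed.
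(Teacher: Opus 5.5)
Your proposal is correct and follows the paper's proof essentially step for step: a minimizing sequence, the midpoint $(f_n+f_m)/2\in\mathscr{C}$, the two regimes of the $p$-Clarkson inequality to get a Cauchy sequence, and closedness of $\mathscr{C}$ in the Banach space $W^{1,p}(X)$. Your explicit uniqueness argument, applying the same inequality to two minimizers, is exactly what the paper means by ``the proof of uniqueness can be done similarly.''
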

\begin{proof}[\bf Proof]
 Let $\{f_n\}\subset\mathscr{C}$ be a minimizing sequence for 
 \begin{align*}
 \alpha:= \inf\{\
\lVert f\lVert_{W^{1,p}(X)}^p\mid f\in\mathscr{C}
\}.   
 \end{align*} 
 Note that $\frac{f_n+f_m}{2}\in\mathscr{C}$.
 When $p\in [\,2,\,+\infty\,[$, 
 \begin{align*}
 \left \lVert\frac{f_n-f_m}{2}\right \lVert_{W^{1,p}(X)}^p&\leq\frac12(\lVert f_n \lVert_{W^{1,p}(X)}^p+
\lVert f_m \lVert_{W^{1,p}(X)}^p)-\alpha\\
 &\to\frac12(\alpha+\alpha)-\alpha=0\quad\text{ as }\quad n,m\to\infty.
 \end{align*}
 When $p\in \,]\,1,\,2\,[$, 
 \begin{align*}
 \left \lVert\frac{f_n-f_m}{2}\right \lVert_{W^{1,p}(X)}^q&\leq\left(\frac12\lVert f_n \lVert_{W^{1,p}(X)}^p+\frac12
\lVert f_m \lVert_{W^{1,p}(X)}^p\right)^{\frac{q}{p}}-\alpha^{\frac{q}{p}}\\
&\to\left(\frac12(\alpha+\alpha)\right)^{\frac{q}{p}}-\alpha^{\frac{q}{p}}=0\quad\text{ as }\quad n,m\to\infty.
 \end{align*}
 In any case, $\{f_n\}$ forms a $W^{1,p}(X)$-Cauchy sequence in $\mathscr{C}$. 
 Since $\mathscr{C}$ is closed in $W^{1,p}(X)$, there exists an $f_{\mathscr{C}}\in\mathscr{C}$ such that 
 $f_n\to f_{\mathscr{C}}$ in $W^{1,p}(X)$ as $n\to\infty$. This implies the existence of minimizer. The proof of uniqueness can be done similarly. 
\end{proof}

\subsection{RCD-spaces}\label{sec:RCD}
In this subsection, we recall the notion of  ${\sf RCD}$-spaces. 

\begin{defn}[{{\bf {\sf RCD}-spaces}}]
A metric measure space $(X,{\sf d},\m)$ is said to be an {\it $\mathsf{RCD}(K,\infty)$-space} if 
it satisfies 
the following conditions: 
\begin{enumerate}[(1)]
\item
$(X, {\sf d}, \m)$ is infinitesimally Hilbertian. 

\item
There exist $x_0 \in X$ and constants $c, C > 0$ such that 
$\m(B_r(x_0)) \le C \e^{c r^2}$. 

\item
If $f \in W^{\hspace{0.03cm}1,2}(X)$ satisfies 
$\lvert Df\rvert_2 \le 1$ $\m$-a.e., then $f$ has a $1$-Lipschitz representative. 

\item
For any $f \in D ( \Delta )$ 
with $\Delta f \in W^{\hspace{0.03cm}1,2}(X)$ 
and $g \in D ( \Delta ) \cap L^\infty (X)$ 
with $g \ge 0$ and $\Delta g \in L^\infty (X)$, 
\begin{align*}
\frac12 \int_X \lvert Df\rvert_2^2 \Delta g \, \d \m 
- \int_X \langle D f, D \Delta f \rangle g \, \d \m 
\ge 
K \int_X \lvert Df\rvert_2^2 g \, \d \m 
\end{align*}
\end{enumerate}
Let $N\in[\,1,\,+\infty\,[$. 
A metric measure space $(X,{\sf d},\m)$ is said to be an {\it $\mathsf{RCD}{}^*(K,N)$-space} if 
it is an $\mathsf{RCD}(K,\infty)$-space and 
for any $f \in D ( \Delta )$ 
with $\Delta f \in W^{\hspace{0.03cm}1,2}(X)$ 
and $g \in D( \Delta ) \cap L^\infty (X)$ 
with $g \ge 0$ and $\Delta g \in L^\infty (X)$, 
\begin{align*}
\frac12 \int_X \lvert Df\rvert_2^2 \Delta g \, \d \m 
- \int_X \langle D f, D \Delta f \rangle g \, \d \m 
\ge 
K \int_X \lvert Df\rvert_2^2 g \, \d \m 
+ \frac{1}{N} \int_X ( \Delta f )^2 g \, \d \m. 
\end{align*}
\end{defn}
\begin{remark}\label{rem: RCD prop}
\begin{enumerate}
\item The last condition above is a weak form 
of the Bochner inequality and it is well known to be equivalent to ``$\Ric \ge K$ and $\dim \le N$''
on Riemannian manifolds. 
There is also a corresponding characterization 
via the Bakry-\'Emery Ricci tensor instead of $\Ric$
on weighted Riemannian manifolds; see \cite{AMS,EKS} and references therein for more details. For $N<\infty$, it is shown in \cite{Cav-Mil, ZLi} that the above notion of $\mathsf{RCD}{}^*(K,N)$-space is equivalent to that of $\mathsf{RCD}(K,N)$-space introduced in \cite{Gigli:OntheDifferentialStr}, which is defined to be a $\mathsf{CD}(K,N)$-space satisfying the infinitesimal Hilbertian condition. 
Here, a $\mathsf{CD}(K,N)$-space is a metric measure space defined in terms of optimal mass transport theory, introduced in \cite{StII, LV2} (see also \cite{AGS_Riem,EKS} for details). 
In particular, we will write
$\mathsf{RCD}(K,N)$-space instead of $\mathsf{RCD}{}^*(K,N)$-space for the remainder of the paper. Moreover, any $\mathsf{RCD}(K,N)$-space $(X, {\sf d}, \m)$ is a locally compact separable metric space, consequently, $\m$ is a Radon measure.  
\item\label{item: bishop gromov} If $(X,{\sf d},\m)$ is an $\mathsf{RCD}(K,N)$-space with $N\in [\,1,\,+\infty\,[$, it enjoys the Bishop--Gromov inequality: 
Let $\kappa:=K/(N-1)$ if $N>1$ and $\kappa:=0$ if $N=1$. We set $\omega_N:=\frac{\pi^{N/2}}{\int_0^{\infty}t^{N/2}e^{-t}\d t}$ (volume of the unit ball in $\R^N$ provided $N\in\mathbb{N}$) and 
$V_{\kappa}(r):=\omega_N\int_0^r\s_{\kappa}^{N-1}(t)\d t$.  
 Then 
\begin{align*}
\frac{\m(B_R(x))}{V_{\kappa}(R)}\leq \frac{\m(B_r(x))}{V_{\kappa}(r)},\qquad x\in X, \qquad 0<r<R. 
\end{align*}
Here  
$\s_{\kappa}(s)$ is the solution to the Jacobi equation $\s_{\kappa}''(s)+\kappa\s_{\kappa}(s)=0$ with 
$\s_{\kappa}(0)=0$, $\s_{\kappa}'(0)=1$. More concretely, $\s_{\kappa}(s)$ is given by
\begin{align*}
\s_{\kappa}(s):=\left\{\begin{array}{cc}\frac{\sin \sqrt{\kappa}s}{\sqrt{\kappa}} & \kappa>0, \\ s & \kappa=0, \\ \frac{\sinh \sqrt{-\kappa}s}{\sqrt{-\kappa}} & \kappa<0.\end{array}\right.
\end{align*}
In particular, $\m$ satisfies the local uniform volume doubling property, i.e., for every $R>0$, there exists $C_R>0$ such that 
$\m(B_{2r}(x))\leq C_R\m(B_r(x))$ for all $x\in X$ and $r\in]0,R[$. 
The local uniform volume doubling property implies that every closed ball is totally bounded. 
Consequently, $(X,{\sf d})$ is a proper metric space and its Hausdorff dimension is less than $N$ (see \cite[Corollary~2.6]{Ohta}).  
\item If $(X,{\sf d},\m)$ is an $\mathsf{RCD}(K,\infty)$-space, then it is known that 
whenever $p$, $q\in \,]1,\,+\infty\,[$ and $f\in W^{1, p}(X)\cap W^{1, q}(X)$, it holds that  
$\lvert Df\rvert_q=\lvert Df\rvert_p$ $\m$-a.e. (see \cite[Proposition~3.3]{GigliHan}). This equivalence is not yet known in the case $p=1$, however we will not vary the value of $p$, thus for the remainder of the paper we will suppress this particular subscript in our notation.
\end{enumerate}
\end{remark}

\section{{$(2,p)$-Sobolev spaces}}\label{sec: test functions}
Throughout this section, we fix an $\mathsf{RCD}(K,N)$-space 
$(X,\mathsf{d},\m)$. We will define the notions of the $W^{2, p}(X)$ and $W^{2, p}_\ast(X)$ spaces and prove some relations between them. Also for later use we fix the following notation.
\begin{defn}
    For $f: X\to \R$ the \emph{Lipschitz constant of $f$} (possibly infinite) is
    \begin{align*}
{\rm Lip}(f):=\sup_{x_1, x_2\in X, x_1\neq x_2}\frac{\lvert f(x_1)-f(x_2)\rvert}{{\sf d}(x_1, x_2)}.
\end{align*}
Then the space of \emph{Lipschitz functions on $X$} is
\begin{align*}
    \lip(X):=\{f: X\to \R\mid \lip(f)<\infty\},
\end{align*}
and $\lip(X)_{\rm bs}$ will denote the set of functions in $\lip(X)$ that are identically zero outside a bounded set.
\end{defn}
This is an abuse of notation, but the difference between $\lip(\gamma)$ for a curve $\gamma$ will be apparent from context.

We now recall the algebra ${\rm Test}(X)$ of \emph{test functions} on $(X,{\sf d},\m)$. These represent the \lq\lq smoothest possible objects\rq\rq\, on $X$ and are used (instead of $C_c^{\infty}(X)$ in the smooth setting) to define several differential operators via suitable integration-by-parts formulae.

\begin{defn}\label{def:TestFunc}
Let $(X,{\sf d},\m)$ be an $\mathsf{RCD}(K,\infty)$-space. Various spaces of {\it test functions} are defined by 
\begin{align*}
{\rm Test}(X):&=\{f\in D(\Delta)\cap L^{\infty}(X)\mid |Df|\in L^{\infty}(X),\Delta f\in W^{\hspace{0.03cm}1, 2}(X)\},\\
{\rm Test}^{\infty}(X):&=\{f\in {\rm Test}(X)\mid \Delta f\in L^{\infty}(X)\},\\
{\rm Test}^{\infty}(X)_{\rm bs}:&=\{f\in {\rm Test}^{\infty}(X)\mid {\rm supp}[f] \text{ is bounded}\}.
\end{align*}
\end{defn}
By definition of $\mathsf{RCD}(K,\infty)$-space, 
${\rm Test}^{\infty}(X)_{\rm bs}\subset{\rm Test}^{\infty}(X)\subset {\rm Test}(X)\subset {\rm Lip}(X)$, 
so we can rewrite the above as 
\begin{align*}
{\rm Test}(X)&=\{f\in D(\Delta)\cap L^{\infty}(X)\cap {\rm Lip}(X)\mid \Delta f\in W^{\hspace{0.03cm}1, 2}(X)\},\\
{\rm Test}^{\infty}(X)&=\{f\in D(\Delta)\cap L^{\infty}(X)\cap {\rm Lip}(X)\mid \Delta f\in W^{\hspace{0.03cm}1, 2}(X)\cap L^{\infty}(X)\},
\end{align*}
respectively. It is shown in 
\cite[Lemma~3.2]{Sav14} 
(resp.~\cite[Theorem~6.1.11]{GPLecture}) that 
${\rm Test}(X)$ (resp.~${\rm Test}^{\infty}(X)$) is an algebra, hence so is ${\rm Test}^{\infty}(X)_{\rm bs}$. 
Since ${\rm Lip}(X)_{\rm bs}\subset W^{\hspace{0.03cm}1,p}(X)$ for $p\in [\,1,\,+\infty\,[$, we see
\begin{align*}
{\rm Test}^{\infty}(X)_{\rm bs}\subset W^{\hspace{0.03cm}1,p}(X).
\end{align*}
Further necessary properties of the above spaces are shown in Appendix~\ref{sec: test functions}.

\subsection{{Traditional \boldmath$(2,p)$}-Sobolev spaces}\label{subsec:(2,p)Sobolev}
We first give a definition of the space $W^{2, p}(X)$ based on the notion of Hessian as in the case of $p=2$, defined via the language of normed modules.

Denote by  $L^2(T^*\!X)$ (resp. $L^2(TX)$) 
the cotangent module 
(resp.~tangent module) defined in 
\cite[Definition~2.2.1]{Gigli:NonSmoothDifferentialStr} (resp.~\cite[Definition~2.3.1]{Gigli:NonSmoothDifferentialStr}). 
By way of \cite[Section~1.3.1]{Gigli:NonSmoothDifferentialStr}, we can define the $L^0$-modules 
$L^0(T^*\!X)$ and $L^0(TX)$ associated to $L^2(T^*\!X)$ and $L^2(TX)$, respectively, i.e.,
\begin{align*}
L^0(T^*\!X):=L^2(T^*\!X)^0,\quad  L^0(TX):=L^2(TX)^0.
\end{align*}
The characterization of Cauchy sequences in these spaces grants that 
the point-wise norms $\lvert \cdot\rvert:L^2(T^*\!X)\to L^2(X)$ and $\lvert \cdot\rvert:L^2(TX)\to L^2(X)$ as well as the musical isomorphisms $\flat:L^2(TX)\to L^2(T^*\!X)$ and $\sharp:L^2(T^*\!X)\to L^2(TX)$ uniquely extend to (non-relabeled) continuous map $\flat:L^0(TX)\to L^0(T^*\!X)$ and $\sharp:L^0(T^*\!X)\to L^0(TX)$. Then 
denote the two-fold tensor products of $L^2(T^*\!X)$ and $L^2(TX)$, respectively, in the sense of
\cite[Definition~1.5.1]{Gigli:NonSmoothDifferentialStr}
by 
\begin{align*}
L^2((T^*)^{\otimes2}X):=L^2(T^*\!X)\otimes L^2(T^*X),\quad 
L^2((T)^{\otimes2}X):=L^2(TX)\otimes L^2(TX).
\end{align*}
By \cite[(3.2.6)]{Gigli:NonSmoothDifferentialStr}, both are separable $L^2$-normed Hilbert modules. They are point-wise isometrically module isomorphic: the respective pairing is initially defined by 
\begin{align*}
(\omega_1\otimes\omega_2)(X_1\otimes X_2):=\omega_1(X_1)\omega_2(X_2)\quad\m\text{-a.e.}
\end{align*}
for $\omega_1,\omega_2\in L^2(T^*X)\cap L^{\infty}(T^*X)$ and $X_1,X_2\in L^2(TX)\cap L^{\infty}(TX)$, 
and is extended by linearity and continuity to $L^2((T^*)^{\otimes2}X)$ and $L^2((T)^{\otimes2}X)$, respectively. By a slight abuse of notation, this pairing, with \cite[Theorem~1.2.24]{Gigli:NonSmoothDifferentialStr}, induces the musical isomorphisms $\flat:L^2((T)^{\otimes2}X)\to L^2((T^*)^{\otimes2}X)$ and $\sharp:=\flat^{-1}$ given by 
\begin{align*}
\<A^{\sharp}\,|\, T\>_{\m}:=A(T)=: \<A\,|\,T^{\flat}\>_{\m}\quad\m\text{-a.e.},
\end{align*}
 then we can write $|A|_{\rm HS}:=\sqrt{\<A\,|\,A\>_{\m}}$ and $|T|_{\rm HS}:=\sqrt{\<T\,|\,T\>_{\m}}$ for $A\in L^2((T^*)^{\otimes2}X)$ and $T\in L^2((T)^{\otimes2}X)$.

\begin{defn}[{{\bf \boldmath$(2,p)$-Sobolev space $W^{\hspace{0.03cm}2,p}(X)$}}]\label{df:Hessian}
{\rm Fix $p\in
[\,1,\,+\infty\,[$ and set $q:=p/(p-1)\in\,]1,\,+\infty\,]$}. 
If $p\in [\,2,\, +\infty\,[$, we define the space $W^{\hspace{0.03cm}2,p}(X)$ as all functions $f\in W^{\hspace{0.03cm}1,p}(X)$ 
for which there exists some $A\in L^2((T^*)^{\otimes2}X)$ such that for every $h\in  {\rm Test}^{\infty}(X)_{\rm bs}$ and $g_1$, $g_2\in {\rm Test}^{\infty}(X)$,   
\begin{align}
2\int_X hA&(\nabla g_1,\nabla g_2)\d\m\notag\\
&=-\int_X\<D f,D g_1\>{\rm div}(h\nabla g_2)\d\m-\int_X\<
D f,D g_2\>{\rm div}(h\nabla g_1)\d\m\label{eq:pHessian}\\
&\hspace{1cm} -\int_Xh\<D f,D\<\nabla g_1,\nabla g_2\>\>\d\m,\notag
\end{align}
and $\lvert A\rvert_{\rm HS}\in L^p(X)$. 
If $p\in \,]\,1, \,2\,[$, the space $W^{\hspace{0.03cm}2,p}(X)$ is defined as above but with the additional requirement that $f\in W^{\hspace{0.03cm}1,p}(X)\cap D(\bm{\Delta})$.

If such an $A$ exists, it is unique, denoted by $\Hess\,f$ and called the 
{\it Hessian} of $f$. The space $W^{\hspace{0.03cm}2,p}(X)$ is then endowed with the norm $\lVert \cdot\rVert_{W^{\hspace{0.03cm}2,p}(X)}$ given by 
\begin{align*}
\lVert f\rVert_{W^{\hspace{0.03cm}2,p}(X)}^p:=\lVert f\rVert_{L^p(X)}^p+\lVert \lvert Df\rvert\rVert_{L^p(X)}^p+\lVert\lvert\Hess\,f\rvert_{\rm HS}\rVert_{L^p(X)}^p.
\end{align*}
If $(X, {\sf d}, \m)$ is also an $\mathsf{RCD}(K, \infty)$-space with $K\in \R$, we define
\begin{align*}
H^{\hspace{0.03cm}2,p}(X):=\overline{W^{\hspace{0.03cm}2,p}(X)\cap 
{\rm Test}^{\infty}(X)}^{ \lVert \cdot\rVert_{W^{\hspace{0.03cm}2, p}(X)}}.
\end{align*}
\end{defn}

Here 
\lq\lq $\nabla$\rq\rq\, is the gradient operator from
$W^{1,2}(X)$ to $L^2(TX)$ (see~\cite[Definition 2.3.4]{Gigli:NonSmoothDifferentialStr}) and \lq\lq ${\rm div}$\rq\rq\, means the divergence operator from $L^2(TX)$ to $W^{1,2}(X)$ (see~\cite[Definition 2.3.11]{Gigli:NonSmoothDifferentialStr}). The first and second terms in the right hand side of \eqref{eq:pHessian} are well-defined, as by the definition of ${\rm Test}^{\infty}(X)_{\rm bs}$ we have
\begin{align*}
    \<D f,D g_1\>&\in L^p_{\loc}(X),\\
    {\rm div}(h\nabla g_2)
&=h\cdot\Delta g_2+\<\nabla h, \nabla g_2\>
\in 
L^{\infty}(X)_{\rm bs}\subset L^q(X)_{\rm bs},
\end{align*}
then we can apply H\"older's inequality. The third term in  the right hand side of \eqref{eq:pHessian} is well-defined, as by polarization of \cite[Lemma 6.1.9]{GPLecture} we see that $\lvert D\<\nabla g_1,\nabla g_2\>\rvert\in L^2(X)$, hence $\<D f,D\<\nabla g_1,\nabla g_2\>\>\in L^1(X)_{\loc}$ when $p\geq 2$. When $p\in \,]\,1, \,2\,[$, by the Leibniz rule in \cite[{Theorem 4.3.3. vi)}]{GPLecture},
\begin{align*}
    &\int_Xh\<D f,D\<\nabla g_1,\nabla g_2\>\>\d\m\\
    &=\int_X\<D f,D(h\<\nabla g_1,\nabla g_2\>)\>\d\m
    -\int_X\<D h,D f\>\<\nabla g_1,\nabla g_2\>\d\m\\
    &=-\int_X h\<\nabla g_1,\nabla g_2\>\d\bm{\Delta}f
    -\int_X\<D h,D f\>\<\nabla g_1,\nabla g_2\>\d\m
\end{align*}
which is also finite.  
\begin{remark}
    When $p=2$, the definition given above is \emph{a priori} weaker than the definition of $W^{2,2}(X)$ given in \cite[Definition~3.3.1]{Gigli:NonSmoothDifferentialStr} or \cite[Definition~6.2.6]{GPLecture}: the difference being that the class of test functions is taken to be ${\rm Test}(X)$ and ${\rm Test}^\infty(X)$ respectively, in contrast to our definition which takes $h\in {\rm Test}^\infty(X)_{\rm bs}$. Since ${\rm Test}^\infty(X)$ is dense in $W^{1, 2}(X)$ by \cite[Proposition 6.1.8]{GPLecture} and $\Div(h\nabla g)=h\cdot \Delta g+\<\nabla h, \nabla g\>$ for $h$, $g\in {\rm Test}(X)$, we see \cite[Definition~3.3.1]{Gigli:NonSmoothDifferentialStr} and \cite[Definition~6.2.6]{GPLecture} are equivalent, while equivalence with our Definition~\ref{df:Hessian} then follows from Lemma~\ref{lem:denseness}.

    The additional restriction that $f\in D(\bm{\Delta})$ when $p\in ]1, 2[$ is  naturally satisfied in the case when $(X, {\sf d}, \m)$ is a Riemannian manifold equipped with the geodesic distance and canonical volume. We have not treated the case $p=1$: as mentioned above, the notion of minimal $p$-weak upper gradient is independent of $p$ when $p\in \,]\,1, \,\infty\,[$, but this is not known when $p=1$, hence it is not clear if the object $\< D f, D g \>$ is well-defined.
\end{remark}

\subsection{{Modified \boldmath$(2,p)$}-Sobolev spaces}\label{subsec: alt(2,p)Sobolev}
As mentioned in the introduction, the only elements of ${\rm Test}(X)\cap W^{2, p}(X)$ given by Definition~\ref{df:Hessian} may be constant functions, hence the usual method to show $W^{2, p}(X)$ contains many functions fails. Thus, we now introduce the modified $(2,p)$-Sobolev space $W^{\hspace{0.03cm}2, p}_\ast(X)$ defined for  $p\in\,[1,\,+\infty\,[$. 

\begin{defn}\label{df:modifiedSobolev}
If $(X, {\sf d}, \m)$ is a metric, measure space, for $p\in \,[1,\,+\infty\,[$, we define the Sobolev space $W^{\hspace{0.03cm}2, p}_\ast(X)$ by
\begin{align*}
    W^{\hspace{0.03cm}2, p}_\ast(X)
    :=\{f\in W^{\hspace{0.03cm}1, p}(X)\mid\, \exists G\in \mathcal{UG}^{\,p}(f)\cap W^{1, p}(X)\},
\end{align*}
and also
\begin{align}
    \lVert f\rVert_{W^{\hspace{0.03cm}2, p}_\ast(X)}
    :=
   \left( \lVert f\rVert^p_{L^p(X)}+\inf_{G\in \mathcal{UG}^{\,p}(f)\cap W^{1, p}(X)}\lVert  G\rVert^p_{W^{1,p}(X)}\right)^{\frac{1}{p}}.\label{eq:distanceSobolev}
\end{align}
If $(X, {\sf d}, \m)$ is also an $\mathsf{RCD}(K, \infty)$-space with $K\in \R$, we also define
\begin{align*}
H^{\hspace{0.03cm}2,p}_\ast(X):=\overline{W^{\hspace{0.03cm}2,p}_\ast(X)\cap 
{\rm Test}^{\infty}(X)}^{ \lVert \cdot\rVert_{W^{\hspace{0.03cm}2, p}_\ast(X)}}.
\end{align*}
\end{defn}
Note that $\mathcal{UG}^{\,p}(f)\cap W^{1, p}(X)$ is a closed, convex subset of $W^{1,p}(X)$ in view of Lemma~\ref{lem:stability} and \cite[Proposition~2.7]{Gigli:OntheDifferentialStr}. 
When $p\in\,]\,1,\,+\infty\,[$, by Corollary~\ref{cor:minimizer}, 
we see that there exists a unique $G_f^*\in \mathcal{UG}^{\,p}(f)\cap W^{1, p}(X)$ such that 
\begin{align*}
\lVert  G_f^*\rVert^p_{W^{1,p}(X)}=\inf_{G\in \mathcal{UG}^{\,p}(f)\cap W^{1, p}(X)}\lVert  G\rVert^p_{W^{1,p}(X)}.
\end{align*}

\begin{prop}\label{prop: Banach space}
    $(W^{\hspace{0.03cm}2, p}_\ast(X), \lVert \cdot\rVert_{W^{\hspace{0.03cm}2, p}_\ast(X)})$ is a Banach space. 
\end{prop}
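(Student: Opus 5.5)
The plan is to check directly that \eqref{eq:distanceSobolev} defines a norm on a vector space, and then to prove completeness by the usual absolutely-convergent-series argument. Along a telescoping sequence of differences we will carry $p$-weak upper gradients and pass to the limit with Lemma~\ref{lem:stability}. For $p>1$ we use the analogue of Lemma~\ref{lem:stability}, whose proof is again that of \cite[Proposition~2.1.13]{GPLecture}. Throughout we use one fact: if $G\in\mathcal{UG}^p(f)$ and $\tilde G=G$ $\m$-a.e., then $\tilde G\in\mathcal{UG}^p(f)$. This holds because the right-hand side of \eqref{eq:SpSobolev} only sees $G$ through $(\mathsf e_t)_\sharp\bm\pi\le C\m$. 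So we may freely pick nonnegative Borel representatives of elements of $W^{1,p}(X)$.

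\emph{Vector space and norm.} Let $G_1\in\mathcal{UG}^p(f)$, $G_2\in\mathcal{UG}^p(g)$ and $\lambda,\mu\in\R$. Integrating the triangle inequality $\lvert(\lambda f+\mu g)(\gamma_1)-(\lambda f+\mu g)(\gamma_0)\rvert\le\lvert\lambda\rvert\lvert f(\gamma_1)-f(\gamma_0)\rvert+\lvert\mu\rvert\lvert g(\gamma_1)-g(\gamma_0)\rvert$ against any $q$-test plan gives $\lvert\lambda\rvert G_1+\lvert\mu\rvert G_2\in\mathcal{UG}^p(\lambda f+\mu g)$. If $G_1,G_2\in W^{1,p}(X)$, this combination also lies in $W^{1,p}(X)$. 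Hence $W^{2,p}_\ast(X)$ is a linear space.

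For $\lambda\neq0$ we have $\mathcal{UG}^p(\lambda f)=\lvert\lambda\rvert\,\mathcal{UG}^p(f)$, which gives homogeneity. Definiteness is immediate, since $\lVert f\rVert_{W^{2,p}_\ast}=0$ forces $\lVert f\rVert_{L^p}=0$. For the triangle inequality, take $G_1,G_2$ almost optimal for $f$ and $g$, and use $G_1+G_2\in\mathcal{UG}^p(f+g)\cap W^{1,p}(X)$. Minkowski in $L^p$ and in $W^{1,p}$ bounds the pair $(\lVert f+g\rVert_{L^p},\lVert G_1+G_2\rVert_{W^{1,p}})$ componentwise by $(\lVert f\rVert_{L^p}+\lVert g\rVert_{L^p},\lVert G_1\rVert_{W^{1,p}}+\lVert G_2\rVert_{W^{1,p}})$. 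Minkowski in $\ell^p(\{1,2\})$ then gives $\lVert f+g\rVert_{W^{2,p}_\ast}\le\lVert f\rVert_{W^{2,p}_\ast}+\lVert g\rVert_{W^{2,p}_\ast}+\varepsilon$ for every $\varepsilon>0$.

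\emph{Completeness.} Let $(f_n)$ be Cauchy. Pass to a subsequence with $\lVert f_{n+1}-f_n\rVert_{W^{2,p}_\ast}<2^{-n}$. Choose $H_n\in\mathcal{UG}^p(f_{n+1}-f_n)\cap W^{1,p}(X)$ with $\lVert H_n\rVert_{W^{1,p}}<2^{-n}$, and choose $G_1\in\mathcal{UG}^p(f_1)\cap W^{1,p}(X)$.
\begin{enumerate}[(i)]
\item Since $\sum_n\lVert f_{n+1}-f_n\rVert_{L^p}<\infty$, $f_n\to f$ in $L^p(X)$ and $\m$-a.e. for some Borel $f$.
\item By the linearity step, $G_1+\sum_{k<n}H_k\in\mathcal{UG}^p(f_n)$. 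Since $W^{1,p}(X)$ is Banach, the series $G:=G_1+\sum_k H_k$ converges in $W^{1,p}(X)$, in particular in $L^p$.
\item The stability lemma then yields $G\in\mathcal{UG}^p(f)$, so $f\in S^p(X)\cap L^p(X)=W^{1,p}(X)$ and $f\in W^{2,p}_\ast(X)$.
\item Applying the same argument to the sequence $f_m-f_n$, $m>n$, which converges $\m$-a.e. to $f-f_n$ as $m\to\infty$ with upper gradients $\sum_{n\le k<m}H_k$, shows $\sum_{k\ge n}H_k\in\mathcal{UG}^p(f-f_n)\cap W^{1,p}(X)$.
\item This element has $W^{1,p}$-norm at most $2^{1-n}$. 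Together with $\lVert f-f_n\rVert_{L^p}\to0$, this gives $\lVert f-f_n\rVert_{W^{2,p}_\ast}\to0$, and a Cauchy sequence with a convergent subsequence converges.
\end{enumerate}

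\emph{Main obstacle.} The one point needing care is the stability lemma: convergence of $f_n$ $\m$-a.e. together with $L^p$-convergence of chosen (not minimal) upper gradients must imply the limit is an upper gradient. The plan is to follow \cite[Proposition~2.1.13]{GPLecture}. Fix a $q$-test plan $\bm\pi$. The right-hand sides of \eqref{eq:SpSobolev} converge by the continuity of $L^p(X)\ni G\mapsto\int\!\int_0^1G(\gamma_t)\lvert\dot\gamma_t\rvert\,\d t\,\d\bm\pi$ noted after Definition~\ref{def:Sobolev}. On the left-hand sides, $f_n(\gamma_i)\to f(\gamma_i)$ for $\bm\pi$-a.e. $\gamma$ because $(\mathsf e_i)_\sharp\bm\pi\ll\m$, and Fatou's lemma closes the inequality. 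This needs no minimality of the upper gradients and works for all $p\in[1,\infty[$.
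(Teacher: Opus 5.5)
Your proposal is correct and follows essentially the same route as the paper. The vector-space and norm axioms come from $G_1+G_2\in\mathcal{UG}^p(f_1+f_2)\cap W^{1,p}(X)$ together with Minkowski in $L^p$ and $\ell^p$; completeness comes from summing upper gradients of consecutive differences along a fast subsequence in $W^{1,p}(X)$ and using the stability lemma to show the limit is an upper gradient of $f-f_n$. The differences are cosmetic. You extract one fast subsequence up front, prove the stability step directly by Fatou's lemma, and treat the $\m$-a.e.\ representative issue explicitly. The paper instead re-extracts a subsequence starting at each fixed $k$ and cites Lemma~\ref{lem:stability} for $p=1$ and Gigli's stability result for $p>1$.
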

\begin{pf}
For any $\lambda\in \R$ and $f\in W^{\hspace{0.03cm}2, p}_\ast(X)$, by definition we see that $\mathcal{UG}^{\,p}(\lambda f)=\{\lvert\lambda\rvert G\mid G\in \mathcal{UG}^{\,p}(f)\}$, since $W^{1, p}(X)$ is closed under multiplication by real numbers, this implies  $W^{\hspace{0.03cm}2, p}_\ast(X)$ is as well. Additionally, since $G_i\in \mathcal{UG}^{\,p}(f_i)\cap W^{1, p}(X)$ for $i=1$, $2$ implies $G_1+G_2\in \mathcal{UG}^{\,p}(f_1+f_2)\cap W^{1, p}(X)$ for any measurable $f_1$ and $f_2$, we see $W^{\hspace{0.03cm}2, p}_\ast(X)$ is also closed under addition, meaning it is a vector space.

Next we show $\lVert \cdot\rVert_{W^{\hspace{0.03cm}2, p}_\ast(X)}$ is a norm. It is clear that $\lVert f\rVert_{W^{\hspace{0.03cm}2, p}_\ast(X)}\geq 0$ for any $f\in W^{\hspace{0.03cm}2, p}_\ast(X)$ and $\lVert 0\rVert_{W^{\hspace{0.03cm}2, p}_\ast(X)}=0$. Since $\lVert f\rVert_{L^p(X)}\leq \lVert f\rVert_{W^{\hspace{0.03cm}2, p}_\ast(X)}$, we see that $\lVert f\rVert_{W^{\hspace{0.03cm}2, p}_\ast(X)}=0$ if and only if $\m$-a.e. we have $f=0$. If $\lambda\in \R$ and $f\in W^{\hspace{0.03cm}2, p}_\ast(X)$, we have
\begin{align*}
 \lVert \lambda f\rVert_{W^{\hspace{0.03cm}2, p}_\ast(X)}^p
 =\lVert \lambda f\rVert_{L^p(X)}^p+\inf_{G\in \mathcal{UG}^{\,p}(f)\cap W^{1, p}(X)}\lVert \lambda G\rVert_{W^{1, p}(X)}^p,
\end{align*}
which yields $ \lVert \lambda f\rVert_{W^{\hspace{0.03cm}2, p}_\ast(X)}= \lvert \lambda \rvert\lVert  f\rVert_{W^{\hspace{0.03cm}2, p}_\ast(X)}$. Finally, suppose $f_1$, $f_2\in W^{\hspace{0.03cm}2, p}_\ast(X)$ and fix $\varepsilon>0$. Then for $i=1$, $2$, there exist $G^\varepsilon_i\in \mathcal{UG}^{\,p}(f_i)\cap W^{1, p}(X)$ such that
\begin{align*}
\left( \lVert f_i\rVert^p_{L^p(X)}+\lVert  G^\varepsilon_i\rVert^p_{W^{1,p}(X)}\right)^{\frac{1}{p}}
\leq  \lVert f_i\rVert_{W^{\hspace{0.03cm}2, p}_\ast(X)}+\varepsilon.
\end{align*}
Then since $G^\varepsilon_1+G^\varepsilon_2\in \mathcal{UG}^{\,p}(f_1+f_2)\cap W^{1, p}(X)$, by the triangle inequality for $L^p$ and $\ell^p$ norms we have
\begin{align*}
  \lVert f_1+f_2\rVert_{W^{\hspace{0.03cm}2, p}_\ast(X)}
  &\leq \left( \lVert f_1+f_2\rVert^p_{L^p(X)}+\lVert  G^\varepsilon_1+G^\varepsilon_2\rVert^p_{W^{1,p}(X)}\right)^{\frac{1}{p}}\\
  &\leq \left( \lVert f_1\rVert^p_{L^p(X)}+\lVert  G^\varepsilon_1\rVert^p_{W^{1,p}(X)}\right)^{\frac{1}{p}}
  +\left( \lVert f_2\rVert^p_{L^p(X)}+\lVert  G^\varepsilon_2\rVert^p_{W^{1,p}(X)}\right)^{\frac{1}{p}}\\
  &\leq  \lVert f_1\rVert_{W^{\hspace{0.03cm}2, p}_\ast(X)}+\lVert f_2\rVert_{W^{\hspace{0.03cm}2, p}_\ast(X)}+2\varepsilon,
\end{align*}
and we obtain the triangle inequality by taking $\varepsilon\to 0$. Thus $\lVert \cdot\rVert_{W^{\hspace{0.03cm}2, p}_\ast(X)}$ is a norm.
    
We now show completeness, to this end suppose that $\{f_k\}_{k\in \mathbb{N}}$ is a $\lVert \cdot\rVert_{W^{\hspace{0.03cm}2, p}_\ast(X)}$-Cauchy sequence in $W^{\hspace{0.03cm}2, p}_\ast(X)$. Since the sequence is clearly Cauchy in $\lVert \cdot\rVert_{L^p(X)}$, there exists some $f\in L^p(X)$ with $\lVert f_k-f\rVert_{L^p(X)}\to 0$ as $k\to\infty$. For any $\varepsilon>0$, there exists $K_\varepsilon\in \N$ and $G_{k, \tilde{k}}\in \mathcal{UG}^{\,p}(f_k-f_{\tilde{k}})\cap W^{1, p}(X)$ for any $k$ and $\tilde{k}\geq K_\varepsilon$ such that
\begin{align*}
 \lVert f_k-f\rVert_{L^p(X)}<\frac{\varepsilon}{2},
 \qquad \lVert G_{k, \tilde{k}}\rVert_{W^{1, p}(X)}<\frac{\varepsilon}{4}.
\end{align*}
Fix some $\varepsilon>0$ and $k\geq K_\varepsilon$, then we can take a subsequence $\{k_\ell\}$ such that $k_1:=k$ and $\lVert G_{k_\ell, k_{\ell+1}}\rVert_{W^{1, p}(X)}<\frac{\varepsilon}{2^{1+\ell}}$, and also such that $\{f_{k_\ell}\}_{\ell\in \N}$ converges pointwise $\m$-a.e. to $f$. Defining $G^k_L:=\sum_{\ell=1}^L G_{k_\ell, k_{\ell+1}}$ for each $L\in \N$, we can see that $\{G^k_L\}_{L\in \N}$ is a Cauchy sequence in $W^{1, p}(X)$, hence converges to some $G$ in $W^{1, p}(X)$. Additionally, we see $G^k_L\in \mathcal{UG}^{\,p}\left(\sum_{\ell=1}^L(f_{k_\ell}-f_{k_{\ell+1}})\right)\cap L^p(X)=\mathcal{UG}^{\,p}(f_{k}-f_{k_{L+1}})\cap L^p(X)$ for each $L\in \N$, thus by Lemma~\ref{lem:stability} when $p=1$ and \cite[Proposition 2.7]{Gigli:OntheDifferentialStr} when $p>1$, we see that $G\in \mathcal{UG}^{\,p}(f_k-f)\cap L^p(X)$. Thus we obtain
\begin{align*}
 \lVert f_k-f\rVert_{W^{\hspace{0.03cm}2, p}_\ast(X)}^p
 &\leq  \lVert f_k-f\rVert_{L^p(X)}^p+\lVert G^k_L\rVert_{W^{1, p}(X)}
 <\frac{\varepsilon}{2}+\varliminf_{L\to\infty}\sum_{\ell=1}^L\lVert G_{k_\ell, k_{\ell+1}}\rVert_{W^{1, p}(X)}\\
 &<\frac{\varepsilon}{2}+\sum_{\ell=1}^\infty\frac{\varepsilon}{2^{1+\ell}}=\varepsilon,
\end{align*}
holding for any $k\geq K_\varepsilon$, finishing the proof of completeness.
\end{pf}

The above completeness, in particular, shows that $H^{\hspace{0.03cm}2,p}_\ast(X )\subset W^{\hspace{0.03cm}2,p}_\ast(X)$. We now show a relationship between the spaces $W^{\hspace{0.03cm}2, p}(X)$ and $W^{\hspace{0.03cm}2, p}_\ast(X)$.

\begin{prop}\label{prop: continuous embedding}
Suppose that $(X, {\sf d}, \m)$ is an $\mathsf{RCD}(K, N)$-space with $K\in \R$ and $N\in [\,1,\, +\infty\,[$. Then, for $p\in ]\,1,\,+\infty\,[$
    \begin{align*}
        H^{\hspace{0.03cm}2, p}(X)\subset W^{\hspace{0.03cm}2, p}_\ast(X),
    \end{align*}
    in particular, 
    \begin{align*}
        W^{\hspace{0.03cm}2, p}(X)\cap {\rm Test}^{\infty}(X)\subset W^{\hspace{0.03cm}2, p}_\ast(X). 
    \end{align*}
    Additionally, the above inclusions are $1$-Lipschitz embeddings.
\end{prop}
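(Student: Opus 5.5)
We first treat $f\in W^{2,p}(X)\cap{\rm Test}^\infty(X)$ and show that $G:=\lvert Df\rvert$ itself lies in $W^{1,p}(X)$, with $\lvert D\lvert Df\rvert\rvert\le\lvert\Hess f\rvert_{\rm HS}$ $\m$-a.e. Since $G\in\mathcal{UG}^{\,p}(f)$, this gives $f\in W^{2,p}_\ast(X)$. We then pass to the closure.

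\textbf{Step 1: the test-function case.} For $f\in{\rm Test}^\infty(X)$ we use the known fact (\cite[Lemma 6.1.9 / Proposition 6.2.x]{GPLecture}, valid on $\mathsf{RCD}(K,\infty)$ spaces) that $\lvert Df\rvert^2\in W^{1,2}_{\loc}(X)$ with
\begin{align*}
  \lvert D\lvert Df\rvert^2\rvert\le 2\lvert\Hess f\rvert_{\rm HS}\lvert Df\rvert \quad \m\text{-a.e.}
\end{align*}
Here $\Hess f$ coincides with the $A$ of Definition~\ref{df:Hessian}, by uniqueness. Set $G_\varepsilon:=\sqrt{\lvert Df\rvert^2+\varepsilon^2}-\varepsilon$. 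The chain rule gives
\begin{align*}
  \lvert DG_\varepsilon\rvert\le \frac{\lvert Df\rvert}{\sqrt{\lvert Df\rvert^2+\varepsilon^2}}\,\lvert\Hess f\rvert_{\rm HS}\le\lvert\Hess f\rvert_{\rm HS}.
\end{align*}
We have $G_\varepsilon\le\lvert Df\rvert\in L^p(X)$, since $f\in W^{1,p}(X)$, and $\lvert \Hess f\rvert_{\rm HS}\in L^p(X)$. Letting $\varepsilon\downarrow0$, Lemma~\ref{lem:stability} (or its $p>1$ analogue, \cite[Proposition 2.7]{Gigli:OntheDifferentialStr}, using weak $L^p$ compactness and Mazur's lemma) yields $\lvert Df\rvert\in W^{1,p}(X)$ with $\lvert D\lvert Df\rvert\rvert\le\lvert\Hess f\rvert_{\rm HS}$. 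A subtlety is that $p$-weak and $2$-weak gradients must be identified. This holds by Remark~\ref{rem: RCD prop}(3), provided the functions are in both spaces; localization with cutoffs from ${\rm Test}^\infty(X)_{\rm bs}$ handles the local issues. Consequently
\begin{align*}
  \lVert f\rVert_{W^{2,p}_\ast}^p\le\lVert f\rVert_{L^p}^p+\lVert\lvert Df\rvert\rVert_{L^p}^p+\lVert\lvert\Hess f\rvert_{\rm HS}\rVert_{L^p}^p=\lVert f\rVert_{W^{2,p}}^p.
\end{align*}
Applying the same argument to $f-g$ for $f,g$ in the intersection gives $\lVert f-g\rVert_{W^{2,p}_\ast}\le\lVert f-g\rVert_{W^{2,p}}$. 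So the identity map is $1$-Lipschitz on $W^{2,p}(X)\cap{\rm Test}^\infty(X)$.

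\textbf{Step 2: closure.} Take $f\in H^{2,p}(X)$ and $f_n\in W^{2,p}(X)\cap{\rm Test}^\infty(X)$ with $f_n\to f$ in $W^{2,p}$. By Step 1, $(f_n)$ is Cauchy in $W^{2,p}_\ast(X)$. By Proposition~\ref{prop: Banach space} it converges in $W^{2,p}_\ast$ to some $\tilde f$. Both norms dominate the $L^p$ norm, so $\tilde f=f$ $\m$-a.e. The norm inequality passes to the limit, which gives the $1$-Lipschitz embedding of $H^{2,p}(X)$. The map is well defined since $W^{2,p}$ is normed by a quantity dominating $L^p$.

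\textbf{Main obstacle.} The key step is the pointwise bound $\lvert D\lvert Df\rvert^2\rvert\le2\lvert\Hess f\rvert_{\rm HS}\lvert Df\rvert$ for test functions, together with the division by $\lvert Df\rvert$ near its zero set. If the cited bound is only available in integrated or self-improved Bochner form, we would first try to rederive it from the identity
\begin{align*}
  d\lvert\nabla f\rvert^2=2\Hess f(\nabla f,\cdot)
\end{align*}
(\cite[Proposition 3.3.22]{Gigli:NonSmoothDifferentialStr}) and the pointwise Cauchy--Schwarz inequality for the module pairing. The $\varepsilon$-regularization then avoids dividing by zero.
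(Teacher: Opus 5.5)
Your proof is correct and follows the paper's route. You use the Kato inequality $\lvert D\lvert Df\rvert\rvert\le\lvert\Hess f\rvert_{\rm HS}$ for $f\in W^{2,p}(X)\cap{\rm Test}^\infty(X)$ to make $\lvert Df\rvert$ an element of $\mathcal{UG}^{\,p}(f)\cap W^{1,p}(X)$, and then pass to the $W^{2,p}$-closure; the differences from the paper are minor. You derive Kato from $\d\lvert\nabla f\rvert^2=2\Hess f(\nabla f,\cdot)$ by $\varepsilon$-regularization, where the paper cites the Gigli--Violo refined Kato inequality and lets $t\to\infty$; you close up via completeness of $W^{2,p}_\ast(X)$ rather than lower semicontinuity of the $p$-energy; and the exponent identification you flag is used silently by the paper too (for $p>2$ what is needed is that $g\in W^{1,2}(X)\cap L^p(X)$ with $\lvert Dg\rvert_2\in L^p(X)$ lies in $W^{1,p}(X)$, which the Gigli--Han heat-flow argument supplies and cutoffs alone do not).
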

\begin{pf}
    Take $f\in H^{\hspace{0.03cm}2,p}(X)$, then there exists a sequence $\{f_n\}\subset W^{\hspace{0.03cm}2,p}(X)\cap {\rm Test}^{\infty}(X)$ such that $f_n\to f$ in $W^{\hspace{0.03cm}2,p}(X)$. By the Kato-type inequality \cite[Lemma 4.6]{GigliViolo23}, 
    \begin{align*}
    \frac{t+N}{t+N-1}\lvert D\lvert Df_n\rvert\rvert^2\leq\lvert \Hess f_n\rvert_{\rm HS}^2+\frac{({\rm Tr}{\Hess} f_n)^2}{t}\quad \m\text{-a.e.~for each }t>0,
    \end{align*}
    thus letting $t\to\infty$, we obtain
    \begin{align*}
    \lvert D\lvert Df_n\rvert\rvert\leq\lvert \Hess f_n\rvert_{\rm HS}\quad \m\text{-a.e.},
    \end{align*}
    hence, 
    \begin{align*}
        \lVert \lvert D\lvert Df_n\rvert\rvert\rVert_{L^p(X)}^p
        &\leq \lVert \lvert \Hess f_n\rvert_{\rm HS}\rVert_{L^p(X)}^p
    \end{align*}
    Since $\{\lvert Df_n\rvert\}$ (resp.~$\{\lvert {\rm Hess} f_n\rvert_{\rm HS}\}$) converges to $\lvert Df\rvert$ 
    (resp.~$\lvert {\rm Hess} f\rvert_{\rm HS}$) in $L^p$ and the $p$-energy $\lvert Df\rvert\mapsto \lVert \lvert D\lvert Df\rvert\rvert\lVert_{L^p}^p$ is lower semicontinuous with respect to $L^p$, we obtain
    \begin{align*}
\lVert\lvert D\lvert Df\rvert\rvert\rVert_{L^p(X)}
&\leq \lVert \lvert \Hess f\rvert_{\rm HS}\rVert_{L^p(X)}<+\infty,
    \end{align*}
    which implies $f\in W_*^{\hspace{0.03cm}2,p}(X)$. Since $\lVert\lvert D\lvert Df\rvert\rvert\in \mathcal{UG}^p(f)$, by the definition \eqref{eq:distanceSobolev} the above yields $\lVert f \rVert_{W_*^{\hspace{0.03cm}2,p}(X)}\leq \lVert f\rVert_{W^{\hspace{0.03cm}2, p}(X)}$.
    \end{pf}

\section{Proof of Theorem~\ref{thm: W2,p morrey}}\label{sec: proof of main theorem}
 In this section, we give the proof of Theorem~\ref{thm: W2,p morrey}. The proof is based on the following version of local, first order Sobolev and Morrey inequalities, which mainly follows from \cite{HK}. Note that in the theorem below, there are two powers $\uppergradpower$ and $\sobolevpower$. The Sobolev and Morrey inequalities will be stated for pairs $(f, G)$ with $G$ being a $\uppergradpower$-weak upper gradient of $f$, but the inequality itself will involve $L^{\sobolevpower}$ norms; in particular the expressions on the right hand sides of~\eqref{item: sobolev ineq} and~\eqref{item: morrey ineq} below may be infinite, and we do not necessarily require either of $f$ or $G$ to belong to $L^p(X)$ for any value of $p$.
 
\begin{thm}\label{thm: W1p sobolev}
    Suppose $(X,{\sf d},\m)$ is an $\mathsf{RCD}(K,N)$-space with $K\in \R$ and $N\in [\,1,\,+\infty\,[$. 
    Also fix $x_0\in X$ and $r_0>0$, let $\Omega:=B_{r_0}(x_0)$, and fix $\uppergradpower$, $\sobolevpower\in [\,1,\, +\infty\,[$, a Borel function $f$, and $G\in \mathcal{UG}^{\,\uppergradpower}(f)$. Then the following hold:
    \begin{enumerate}[\rm (1)]
        \item\label{item: sobolev ineq} If $\sobolevpower\in [\,1,\, N\,[$, there exists some $C_S>0$ such that 
        \begin{align*}
            \left(\m(\Omega)^{-1}\int_{\Omega}\lvert f-f_{\Omega}\rvert^{\sobolevpower^\ast}\d\m\right)^{\frac{1}{\sobolevpower^\ast}}
            \leq C_S r_0\left(\m(\Omega)^{-1}\int_{\Omega}\lvert G\rvert^{\sobolevpower}\d\m\right)^{\frac{1}{\sobolevpower}}
        \end{align*}
        where $\sobolevpower^\ast:=\frac{N\sobolevpower}{N-\sobolevpower}$ and $f_{\Omega}:=\m(\Omega)^{-1}\int_{\Omega} f\d\m$. 
        \item\label{item: morrey equal} If
         $\sobolevpower=N$, there exist $C_1>0$ and $C_2>0$ such that 
         \begin{align*}
            \m(\Omega)^{-1}\int_{\Omega}
            \exp\left( \dfrac{C_1\m(\Omega)^{1/N}\lvert f-f_{\Omega}\rvert}{r_0
            \|G\|_{L^N(\Omega)}
            } \right)
            \d\m
            \leq C_2 
        \end{align*}
        \item\label{item: morrey ineq} If $\sobolevpower\in \,]\,N, \,+\infty\,[$, then $f$ has a continuous $\m$-version satisfying
        \begin{align*}
            \lvert f(y_1)-f(y_2)\rvert
            \leq C_Mr_0^{\frac{N}{\sobolevpower}}{\sf d}(y_1, y_2)^{1-\frac{N}{\sobolevpower}}\left(\m(\Omega)^{-1}\int_{\Omega}\lvert G\rvert^{\sobolevpower}\d\m\right)^{\frac{1}{\sobolevpower}}
        \end{align*}
        for any $y_1$, $y_2\in \Omega$ and some constant $C_M>0$.
        
    \end{enumerate}
    Here $C_S$, $C_1$, $C_2$, and $C_M$ depend on $K$, $N$, $x_0$, $r_0$, $\uppergradpower$, and $\sobolevpower$.
\end{thm}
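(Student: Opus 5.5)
The plan is to derive all three statements from the abstract machinery of Haj{\l}asz--Koskela~\cite{HK}, specifically \cite[Theorem~5.1]{HK}. That result is formulated for \emph{pairs} $(u,g)$ of functions satisfying a $(1,\sobolevpower)$-Poincar\'e inequality on balls, in a ball $\Omega$ satisfying a relative lower volume bound \cite[(21)]{HK}. So the work is to verify, for our pair $(f,G)$ with $G\in\mathcal{UG}^{\uppergradpower}(f)$, the three hypotheses of that theorem with constants depending only on $K,N,x_0,r_0,\uppergradpower,\sobolevpower$.

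\textbf{Step 1: reductions.} If $\int_\Omega G^{\sobolevpower}\,\d\m=+\infty$, the right hand sides of \eqref{item: sobolev ineq} and \eqref{item: morrey ineq} are infinite and there is nothing to prove; the case $\sobolevpower=N$ should be read with $\|G\|_{L^N(\Omega)}<\infty$ for the same reason. So I assume $G\in L^{\sobolevpower}(\Omega)$. Since $f$ need not be integrable, I will first work with the truncations $f_k:=(f\wedge k)\vee(-k)$. For these, $G$ is still a $\uppergradpower$-weak upper gradient, since $|f_k(\gamma_1)-f_k(\gamma_0)|\le |f(\gamma_1)-f(\gamma_0)|$ in~\eqref{eq:SpSobolev}. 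At the end I let $k\to\infty$ via monotone convergence and Fatou. Integrability of $f$ on $\Omega$ comes out of the Poincaré inequality itself, by the usual argument applied to the truncations.

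\textbf{Step 2: volume hypotheses.} By Bishop--Gromov (Remark~\ref{rem: RCD prop}\eqref{item: bishop gromov}), on balls of radius at most $2r_0$ centered in $2\Omega$ the measure is doubling with a constant $C_D=C_D(K,N,r_0)$. For $x\in\Omega$ and $r\le r_0$ we have $B_{r_0}(x_0)\subset B_{2r_0}(x)$, so
\begin{align*}
\frac{\m(B_r(x))}{\m(\Omega)}\ge \frac{\m(B_r(x))}{\m(B_{2r_0}(x))}\ge \frac{V_\kappa(r)}{V_\kappa(2r_0)}\ge c(K,N,r_0)\Bigl(\frac{r}{r_0}\Bigr)^N .
\end{align*}
This is exactly the lower mass bound \cite[(21)]{HK} with exponent $s=N$, and it is why the dichotomy occurs at $\sobolevpower=N$.

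\textbf{Step 3: Poincaré inequality for the pair $(f,G)$ (main obstacle).} The known local $(1,1)$-Poincaré inequality on $\mathsf{CD}(K,N)$ spaces (Rajala) is usually stated for upper gradients or minimal weak upper gradients. Here $G$ is only a $\uppergradpower$-weak upper gradient, defined through $q$-test plans with $q=\uppergradpower/(\uppergradpower-1)$. I would redo Rajala's proof directly. For a ball $B=B_r(x)$, $2B\subset 2\Omega$, his argument produces, for $\m$-a.e.\ pair of points, a measure on geodesics (built by optimal transport from $\m|_{B}/\m(B)$ to Dirac masses, averaged). The resulting plan $\bm\pi$ has bounded compression $({\sf e}_t)_\sharp\bm\pi\le C\m(B)^{-1}\m|_{2B}$, is concentrated on geodesics of length $\le 2r$, and satisfies $({\sf e}_0,{\sf e}_1)_\sharp\bm\pi=\m(B)^{-2}\,\m|_B\otimes\m|_B$. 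Such a plan is an $\infty$-test plan, hence a $q$-test plan for every $q$. Plugging it into~\eqref{eq:SpSobolev} gives
\begin{align*}
\frac{1}{\m(B)^2}\int_B\!\int_B|f(y)-f(z)|\,\d\m\,\d\m\le C\,r\,\frac{1}{\m(B)}\int_{2B}G\,\d\m,
\end{align*}
and therefore $\fint_B|f-f_B|\le Cr\fint_{2B}G$. Hölder's inequality then upgrades this to the $(1,\sobolevpower)$-version. Since $X$ is geodesic, \cite[Theorem~9.7]{HK} removes the dilation factor $2$, so the Poincaré inequality holds on balls contained in $\Omega$. The delicate point is making sure the plans stay concentrated in a fixed enlargement of $\Omega$ and that the constant depends only on $K,N,r_0$.

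\textbf{Step 4: conclusion.} With Steps 2 and 3 in hand, \cite[Theorem~5.1]{HK} applied in $\Omega$ with $s=N$ gives the three cases. For $\sobolevpower<N$ it gives the Sobolev inequality \eqref{item: sobolev ineq} with exponent $\sobolevpower^\ast$. For $\sobolevpower=N$ it gives the Trudinger exponential bound \eqref{item: morrey equal}. For $\sobolevpower>N$ it gives the Hölder estimate \eqref{item: morrey ineq} at Lebesgue points. Lebesgue points are of full measure by doubling, and defining the continuous version as the limit of ball averages yields the representative on all of $\Omega$.
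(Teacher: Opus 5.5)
\textbf{Gap: the case $\sobolevpower=1$ of \eqref{item: sobolev ineq}.} You never establish the truncation property of the pair $(f,G)$. Without it, the Haj{\l}asz--Koskela machinery does not give the strong-type estimate at the critical exponent $1^\ast=N/(N-1)$.

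Starting from a Poincar\'e inequality alone, their argument bounds $\lvert f-f_\Omega\rvert$ pointwise by a truncated Riesz potential of $G$. For $\sobolevpower>1$ this potential is bounded $L^{\sobolevpower}\to L^{\sobolevpower^\ast}$, provided you keep the $(1,1)$-inequality you derived rather than only its H\"older-weakened $(1,\sobolevpower)$ form. For $\sobolevpower=1$, however, it only maps $L^1$ into weak-$L^{N/(N-1)}$.

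The upgrade to strong type is Maz'ya's truncation trick. It requires that for all $t_1<t_2$ the pair $\bigl(\phi^{t_2}_{t_1}\circ f,\ G\mathds{1}_{\{t_1<f\le t_2\}}\bigr)$ again satisfies the Poincar\'e inequality, which amounts to $G\mathds{1}_{\{t_1<f\le t_2\}}\in\mathcal{UG}^{\uppergradpower}(\phi^{t_2}_{t_1}\circ f)$.

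Your Step~1 observation $\lvert f_k(\gamma_1)-f_k(\gamma_0)\rvert\le\lvert f(\gamma_1)-f(\gamma_0)\rvert$ does not supply this. It only shows that $G$ itself remains a weak upper gradient of the truncation; it does not localize $G$ to the level band. Without that localization, summing over dyadic level bands does not recover the $L^{N/(N-1)}$ bound.

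This locality statement is exactly what the paper proves in Lemma~\ref{lem:compositeUG} and Corollary~\ref{lem:compositeUG*}. It mollifies $\phi$, applies the chain rule to $\phi_\varepsilon\circ f$, and passes to the limit by dominated convergence. For $\uppergradpower=1$ this requires rechecking the chain rule with $\infty$-test plans, which is not available in the literature.

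\textbf{Secondary issue: the domain on the right-hand side.} The ball version \cite[Theorem 5.1]{HK} loses a dilation: the left-hand side lives on $B_0$, while $G$ is integrated over an enlarged ball ($5\sigma B_0$). So even after removing the dilation in the Poincar\'e inequality, you would get $\int_{5\Omega}G^{\sobolevpower}\,\d\m$ on the right rather than $\int_\Omega G^{\sobolevpower}\,\d\m$ as stated, and $G$ may be large outside $\Omega$. The paper avoids this by using that balls in a geodesic space are John domains \cite[Corollary 9.5]{HK} and applying the John-domain result \cite[Theorem 9.7]{HK}, which gives the estimates with $\Omega$ on both sides.

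\textbf{What matches the paper.} Your Step~2 (the Bishop--Gromov lower mass bound with exponent $N$) is the same as the paper's. So is your Step~3: Rajala's geodesic plans are $q$-test plans for every $q\in\,]1,+\infty]$, which yields a $(1,1)$-Poincar\'e inequality for any $G\in\mathcal{UG}^{\uppergradpower}(f)$.
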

 To obtain the estimate in Theorem~\ref{thm: W1p sobolev}~\eqref{item: sobolev ineq} when $\sobolevpower=1$, we need a few preliminary results. The first is to show the so-called truncation property as in \cite[Definition, p.~9]{HK} for a pair $f\in W^{1, 1}(X)$, $G\in \mathcal{UG}^{\,1}(f)\cap L^1(X)$.

\begin{lem}\label{lem:compositeUG}
    Let $f:X\to \R$ be a Borel function and fix $p\in\,[\,1,\,+\infty\,[$.
 Also, assume that $\phi$ is a unit contraction, i.e.,
 $\phi(t):=0\lor t\land 1$ and $f\in S^{\hspace{0.03cm}p}(X)$ with $G\in\mathcal{UG}^{\,p}(f)\cap L^p(X)$. 
    Then $\phi\circ f\in S^{\hspace{0.03cm}p}(X)$ and $G\mathds{1}_{\{0<f\leq1\}}\in\mathcal{UG}^{\,p}(\phi\circ f)\cap L^p(X)$. 
\end{lem}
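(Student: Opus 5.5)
We must show that for any $q$-test plan $\bm{\pi}$,
\begin{align*}
\int \lvert \phi(f(\gamma_1))-\phi(f(\gamma_0))\rvert\,\bm{\pi}(\d\gamma)\leq \int\int_0^1 G(\gamma_t)\mathds{1}_{\{0<f\leq 1\}}(\gamma_t)\lvert\dot\gamma_t\rvert\,\d t\,\bm{\pi}(\d\gamma).
\end{align*}
Membership in $L^p(X)$ is immediate, since $G\mathds{1}_{\{0<f\leq1\}}\leq G$. The plan is to pass from the integrated inequality \eqref{eq:SpSobolev} to a curvewise statement, and then do a one-dimensional chain-rule argument along curves.

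\textbf{Step 1 (curvewise characterization).} We use the characterization invoked in the proof of Lemma~\ref{lem:lattice}, namely the analogue of \cite[Theorem~2.1.21]{GPLecture}, which holds for any $p\in[\,1,\,+\infty\,[$. It says $G\in\mathcal{UG}^{\,p}(f)$ if and only if, for every $q$-test plan $\bm{\pi}$ and $\bm{\pi}$-a.e. $\gamma$, the function $t\mapsto f(\gamma_t)$ agrees a.e. on $[0,1]$ and at $t=0,1$ with an absolutely continuous $f_\gamma$ satisfying $\lvert f_\gamma'(t)\rvert\leq G(\gamma_t)\lvert\dot\gamma_t\rvert$ for a.e. $t$. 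The "if" direction is trivial by integration. The "only if" direction uses the restriction and rescaling invariance of test plans: if $\bm{\pi}$ is a test plan, so are its restrictions $({\rm restr}_s^t)_\sharp\bm{\pi}$ and $\bm{\pi}|_\Gamma/\bm{\pi}(\Gamma)$. Together with a Fubini argument, this gives the curvewise bound on a countable dense set of pairs $(s,t)$, and the conclusion follows.

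\textbf{Step 2 (one-dimensional truncation).} Fix such a good $\gamma$ and set $u:=f_\gamma$, which is absolutely continuous with $\lvert u'\rvert\leq g:=G(\gamma_\cdot)\lvert\dot\gamma_\cdot\rvert\in L^1(0,1)$. Then $\phi\circ u$ is absolutely continuous, since $\phi$ is $1$-Lipschitz. The classical chain rule for Lipschitz $\phi$ gives $(\phi\circ u)'=\mathds{1}_{\{0<u<1\}}u'$ a.e.; on the level sets $\{u=0\}$ and $\{u=1\}$ we have $u'=0$ a.e. Hence $\lvert(\phi\circ u)'\rvert\leq \mathds{1}_{\{0<u\leq 1\}}g$ a.e. Since $u(t)=f(\gamma_t)$ for a.e. $t$, this bound becomes $\mathds{1}_{\{0<f\leq1\}}(\gamma_t)G(\gamma_t)\lvert\dot\gamma_t\rvert$ for a.e. $t$. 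Moreover $\phi\circ u$ agrees with $\phi\circ f\circ\gamma$ at the endpoints and a.e.
\begin{align*}
\lvert \phi(f(\gamma_1))-\phi(f(\gamma_0))\rvert\leq\int_0^1 \mathds{1}_{\{0<f\leq1\}}(\gamma_t)G(\gamma_t)\lvert\dot\gamma_t\rvert\,\d t.
\end{align*}
Integrating against $\bm{\pi}$ gives the claim, and hence $\phi\circ f\in S^{\hspace{0.03cm}p}(X)$.

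\textbf{Main obstacle.} The delicate point is Step 1 in the case $p=1$ (so $q=\infty$). There, test plans have bounded Lipschitz constant rather than finite $q$-energy, so one must check that restriction and conditioning preserve the $\infty$-test plan class. This does hold: compression is unchanged up to the factor $1/\bm{\pi}(\Gamma)$, and restriction only rescales ${\rm Lip}$ by $t-s$. One must also check that the exceptional sets of curves can be chosen uniformly over the countably many pairs $(s,t)$.

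A second minor issue is measurability of $\mathds{1}_{\{0<f\leq1\}}G$ and the identification of $u$ with $f\circ\gamma$ a.e. This is handled because $(\gamma,t)\mapsto f(\gamma_t)$ is Borel and $(\mathsf e_t)_\sharp\bm{\pi}\ll\m$, so modifying $f$ on $\m$-null sets is harmless.
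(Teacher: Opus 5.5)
Your argument is correct, but it takes a different route from the paper's proof. The paper does not work curve by curve here. It approximates $\phi$ by smooth $\phi_\varepsilon$ with $\phi_\varepsilon(0)=0$ and $\lvert\phi_\varepsilon'\rvert\le 1$. It then applies the chain rule for smooth functions, \cite[Theorem~2.1.28 B2)]{GPLecture}; for $p=1$ this rule rests on the same curvewise characterization \cite[Theorem~2.1.21]{GPLecture} that you use. This gives $\lvert\phi_\varepsilon'\circ f\rvert G\in\mathcal{UG}^{\,p}(\phi_\varepsilon\circ f)$. Finally it lets $\varepsilon\to0$ in the integrated inequality by dominated convergence, using bounded compression together with ${\rm Lip}(\gamma)\le C_{\bm\pi}$ when $p=1$, or H\"older when $p>1$.

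You instead apply the characterization directly and reduce everything to the one-dimensional chain rule for the piecewise linear $\phi$. The paper's approach is modular, since it reuses an off-the-shelf calculus rule. Yours is shorter and handles the level sets more carefully. The fact that $u'=0$ a.e. on $\{u=0\}\cup\{u=1\}$ is exactly what justifies discarding $G$ on $\{f=0\}$, and it even gives $G\mathds{1}_{\{0<f<1\}}$. In the paper's limit, by contrast, $\phi_\varepsilon'\to\mathds{1}_{]0,1]}$ is only stated for a.e. $t\in\R$. That does not by itself control $\phi_\varepsilon'\circ f$ on $\{f\in\{0,1\}\}$ when this set has positive $\m$-measure. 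One has to choose $\phi_\varepsilon$ with pointwise convergence at $0$ and $1$, for instance with $\phi_\varepsilon'=0$ on $]-\infty,0]$ and $\phi_\varepsilon'=1$ on $[\varepsilon,1]$.

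One correction to your sketch of Step 1. A bound on a countable dense set of pairs $(s,t)$ only yields an absolutely continuous extension of $f\circ\gamma$ restricted to a countable set. It says nothing about $f\circ\gamma$ at other times, so it cannot give $f_\gamma=f\circ\gamma$ a.e. What conditioning plus Fubini actually provides, for $\bm\pi$-a.e. $\gamma$, is
\[
\lvert f(\gamma_t)-f(\gamma_s)\rvert\le\int_s^t G(\gamma_r)\lvert\dot\gamma_r\rvert\,\d r \quad\text{for $\mathcal{L}^2$-a.e. $(s,t)$},
\]
together with the analogous bounds for a.e. $t$ with $s=0$ and a.e. $s$ with $t=1$ to handle the endpoints. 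The representative then follows from the standard one-dimensional lemma. Since you are citing the characterization, as the paper does, this does not affect the validity of your argument.
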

\begin{proof}[{\bf Proof.}]
Let $\phi_{\varepsilon}$ be smooth functions constructed by mollifying $\phi$ (see \cite[Exercise~1.2.1]{FOT}) such that $\lvert \phi'_{\varepsilon}\rvert\leq 1$, $-\varepsilon\leq \phi_\varepsilon\leq 1+\varepsilon$, and $\phi_{\varepsilon}(0)=0$ for all $\varepsilon>0$, while $\phi_{\varepsilon}(t)\to \phi(t)$ and $\phi_{\varepsilon}'(t)\to \mathds{1}_{]\,0,\,1\,]}(t)$ for a.e.~$t$ as $\varepsilon\to 0$. Since $\phi_{\varepsilon}$ is a smooth function satisfying $\phi_{\varepsilon}(0)=0$, we can see that
$\phi_{\varepsilon}\circ f\in S^p(X)$ and $\lvert \phi_{\varepsilon}'\circ f\rvert G\in\mathcal{UG}^{\,p}(\phi_{\varepsilon}\circ f)\cap L^p(X)$. 
Indeed, when $p>1$ this follows directly from the chain rule, \cite[Theorem 2.1.28 B2)]{GPLecture}. When $p=1$, we note that one only needs to verify that Steps 1, 2, 3, and 7 from the proof of \cite[Theorem 2.1.28 B2)]{GPLecture} hold: Steps 1, 2, and 7 clearly still hold when $p=1$, while Step 3 relies on the characterization \cite[Theorem 2.1.21]{GPLecture}, which can be seen remains true when $p=1$. 
In particular, we have
\begin{align}\label{eqn: chain rule}
\int_{C([0,1], X)}\lvert\phi_{\varepsilon}(f(\gamma_1))
-\phi_{\varepsilon}(f(\gamma_0))\rvert
\text{\boldmath$\pi$}(\d\gamma)\leq
\int_{C([0,1], X)}\int_0^1\lvert\phi_{\varepsilon}'(f(\gamma_t))\rvert G(\gamma_t)\lvert\dot{\gamma}_t\rvert\d t\text{\boldmath$\pi$}(\d\gamma).
\end{align}
Since $\lvert\phi_{\varepsilon}(t)-\phi_{\varepsilon}(s)\rvert\leq \lvert t-s\rvert$, by dominated convergence theorem the sequence of integrals on the left of~\eqref{eqn: chain rule} converges to $\int_{C([0,1], X)}\lvert\phi(f(\gamma_1))-\phi(f(\gamma_0))\rvert
\text{\boldmath$\pi$}(\d\gamma)$. On the other hand, if $p=1$, since $\bm{\pi}$ is an $\infty$-test plan there is a constant $C_{\bm{\pi}}>0$ such that $\lvert \dot \gamma_t\rvert\leq C_{\bm{\pi}}$ for $\bm{\pi}$-a.e. $\gamma$. Thus for any $\varepsilon>0$ the integrand on the right of~\eqref{eqn: chain rule} is bounded for all $t\in [0, 1]$ and $\bm{\pi}$-a.e. $\gamma$ by $C_{\bm{\pi}}G(\gamma_t)$, and using the bounded compression of $\bm{\pi}$ this satisfies
\begin{align*}
    \int_0^1\int_X G\d(({\sf e}_t)_\sharp{\bm{\pi}})\d t
    &\leq  C\int_0^1\lVert G\rVert_{L^1(X)}\d t
    =C\lVert  G\rVert_{L^1(X)}.
\end{align*}
If $p>1$, the integrand is bounded by $G(\gamma_t)\lvert \dot \gamma_t\rvert$ and we have (again using bounded compression)
\begin{align*}
    \int_{C([0,1], X)}&\int_0^1 G(\gamma_t)\lvert\dot{\gamma}_t\rvert\d t\text{\boldmath$\pi$}(\d\gamma)\\
    &\leq \left(\int_{C([0,1], X)}\int_0^1 (G^p\circ \gamma_t)\d t\bm{\pi}(\d\gamma)\right)^{\frac{1}{p}} \left(\int_{C([0,1], X)}{\sf Ke}_q(\gamma)\bm{\pi}(\d\gamma)\right)^{\frac{1}{q}}\\
    &= \left(\int_0^1 \int_XG^p\d(({\sf e}_t)_\sharp{\bm{\pi}})\d t\right)^{\frac{1}{p}} 
    \left(\int_{C([0,1], X)}{\sf Ke}_q(\gamma)\bm{\pi}(\d\gamma)\right)^{\frac{1}{q}}\\
    &\leq C^{\frac{1}{p}}\lVert G\rVert_{L^p(X)}\left(\int_{C([0,1], X)}{\sf Ke}_q(\gamma)\bm{\pi}(\d\gamma)\right)^{\frac{1}{q}};
\end{align*}
in all cases we can apply the dominated convergence theorem to obtain the conclusion. 
\end{proof}
\begin{cor}\label{lem:compositeUG*}
Let $f:X\to \R$ be a Borel function.  
    Set $p\in\,[\,1,\,+\infty\,[$ and $q:=p/(p-1)$ if $p>1$ and $q:=\infty$ if $p=1$. 
    Take $t_1,t_2\in\R$ with $t_1<t_2$ and consider 
    $\phi^{t_2}_{t_1}(t)=0\lor(t-t_1)\land(t_2-t_1)$. 
    Then $\phi^{t_2}_{t_1}\circ f\in S^p(X)$ and $G\mathds{1}_{\{t_1<f\leq t_2\}}\in\mathcal{UG}^{\,p}(\phi^{t_2}_{t_1}\circ f)\cap L^p(X)$. 
\end{cor}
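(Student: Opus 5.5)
The plan is to reduce the corollary to Lemma~\ref{lem:compositeUG} by an affine change of variables. Note that $\phi^{t_2}_{t_1}(t)=(t_2-t_1)\,\phi\bigl((t-t_1)/(t_2-t_1)\bigr)$, where $\phi(s)=0\lor s\land 1$ is the unit contraction. Set $\lambda:=t_2-t_1>0$ and $\tilde f:=(f-t_1)/\lambda$. Then $\phi^{t_2}_{t_1}\circ f=\lambda\,(\phi\circ\tilde f)$, and $\{0<\tilde f\le 1\}=\{t_1<f\le t_2\}$. As in Lemma~\ref{lem:compositeUG}, I read the hypothesis as $f\in S^p(X)$ with $G\in\mathcal{UG}^{\,p}(f)\cap L^p(X)$.

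The first step is to show that $G/\lambda\in\mathcal{UG}^{\,p}(\tilde f)\cap L^p(X)$. For any $q$-test plan $\bm{\pi}$, constants cancel in $\tilde f(\gamma_1)-\tilde f(\gamma_0)$. So the left side of \eqref{eq:SpSobolev} for $\tilde f$ equals $\lambda^{-1}$ times that for $f$. Dividing the inequality for $(f,G)$ by $\lambda$ gives the claim.

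In particular $\tilde f\in S^p(X)$, since membership in $S^p$ is defined purely through \eqref{eq:SpSobolev} and imposes no integrability on $\tilde f$ itself. Lemma~\ref{lem:compositeUG} then gives $\phi\circ\tilde f\in S^p(X)$ with $\lambda^{-1}G\mathds{1}_{\{0<\tilde f\le1\}}\in\mathcal{UG}^{\,p}(\phi\circ\tilde f)\cap L^p(X)$.

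The last step is to multiply by $\lambda$: the same scaling argument, read in reverse, gives $G\mathds{1}_{\{t_1<f\le t_2\}}\in\mathcal{UG}^{\,p}(\lambda\,\phi\circ\tilde f)=\mathcal{UG}^{\,p}(\phi^{t_2}_{t_1}\circ f)$. This function lies in $L^p(X)$ because it is dominated by $G$.

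I do not expect a real obstacle. The one point to check is that Lemma~\ref{lem:compositeUG} was proved for an arbitrary Borel $f\in S^p(X)$, with no $L^p$ assumption on $f$, which is how it is stated, so it applies to the shifted function $\tilde f$. The role of $q$ in the statement is only to fix which class of test plans ($q$-test plans, or $\infty$-test plans when $p=1$) is used in \eqref{eq:SpSobolev}, and the scaling argument is uniform in that class.
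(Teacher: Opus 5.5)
Your proposal is correct and matches the paper's proof: the paper also passes to $(f-t_1)/(t_2-t_1)$ with upper gradient $G/(t_2-t_1)$, applies Lemma~\ref{lem:compositeUG}, and rescales using $\phi^{t_2}_{t_1}(t)=(t_2-t_1)\,\phi^{1}_{0}\bigl(\frac{t-t_1}{t_2-t_1}\bigr)$. Your reading of the unstated hypothesis ($f\in S^p(X)$ with $G\in\mathcal{UG}^{\,p}(f)\cap L^p(X)$) is the same one the paper's proof assumes.
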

\begin{proof}[{\bf Proof.}]
    Since $f\in S^{\hspace{0.03cm}p}(X)$ with $G\in\mathcal{UG}^{\,p}(f)\cap L^p(X)$, we have 
    $f-t_1\in S^{\hspace{0.03cm}p}(X)$ with $G\in\mathcal{UG}^{\,p}(f-t_1)\cap L^p(X)$, hence $\frac{f-t_1}{t_2-t_1}\in S^{\hspace{0.03cm}p}(X)$ with $\frac{G}{t_2-t_1}\in\mathcal{UG}^{\,p}\left(\frac{f-t_1}{t_2-t_1}\right)\cap L^p(X)$. Then by Lemma~\ref{lem:compositeUG}, 
    $\phi_0^1\left(\frac{f-t_1}{t_2-t_1}\right)\in S^{\hspace{0.03cm}p}(X)$ with 
    $\frac{G}{t_2-t_1}\mathds{1}_{]\,0,\,1\,]}\left(\frac{f-t_1}{t_2-t_1}\right)\in 
    \mathcal{UG}^{\,p}\left(
    \phi_0^1\left(\frac{f-t_1}{t_2-t_1}\right)\right)\cap L^p(X)$.
    From $\phi^{t_2}_{t_1}(t)=(t_2-t_1)\phi^{1}_{0}(\frac{t-t_1}{t_2-t_1})$, we can deduce the assertion. 
\end{proof}

\begin{defn}
    If $\Omega\subset X$, a positive Borel measure $\mu$ on $X$ is said to be \emph{doubling on $\Omega$} if there exists a constant $C>0$ such that for any $x\in \Omega$ and $0<r\leq 5\diam(\Omega)$, it holds that
    \begin{align*}
        \mu(B_{2r}(x))\leq C\mu(B_r(x)).
    \end{align*}
\end{defn}
\begin{proof}[\bf Proof of Theorem~\ref{thm: W1p sobolev}]

We first note that regardless of the value of $\uppergradpower\in [\,1,\, +\infty\,[$, the pair $(f, G)$ satisfies a local $(1, \sobolevpower\,)$-Poincar{\'e} inequality; meaning there exists a $C_P>0$ such that for any $x\in \Omega$ and $r>0$ such that $B_{2r}(x)\subset \Omega$,
    \begin{align}\label{eqn: poincare}
            \m(B_r(x))^{-1}\int_{B_r(x)}\lvert f-f_{B_r(x)}\rvert \d\m
            \leq C_P r\left(\m(B_{2r}(x))^{-1}\int_{B_{2r}(x)}\lvert G\rvert^{\sobolevpower} \d\m\right)^{\frac{1}{\sobolevpower}}.
        \end{align}
        Indeed, fix an open ball $B$, and let {\boldmath$\pi$} be the measure constructed as in \cite[Proof of Theorem 1]{Rajala12} from $B$. By \cite[Lemma 1]{Rajala12} we see that {\boldmath$\pi$} satisfies Definition~\ref{def:$q$-test}~\eqref{item:qtest1}. At the same time, {\boldmath$\pi$}-a.e. curve $\gamma$ is a geodesic with endpoints in $B$, hence for such a curve by \cite[Proposition 1.2.10]{GPLecture},
    \begin{align*}
        \lip(\gamma)&={\sf d}(\gamma_0, \gamma_1)\leq \diam(B),\\
        {\sf Ke}_q(\gamma)&={\sf d}(\gamma_0, \gamma_1)^q\leq \diam(B)^q,\qquad \forall q\in \,]\,1,\, +\infty\,[
    \end{align*}
    thus, we find that {\boldmath$\pi$} is a $q$-test plan for any $q\in \,]\,1,\, +\infty\,]$. Hence in the final calculation in \cite[Proof of Theorem 1]{Rajala12}, one can replace $g$ there by any $G\in \mathcal{UG}^{\,\uppergradpower}(f)$, then the fact that $N<\infty$ implies that $(f, G)$ satisfies a local $(1, 1)$-Poincar{\'e} inequality (that is,~\eqref{eqn: poincare} with $\sobolevpower=1$ on the right hand side) in the same way one obtains \cite[Theorem 2]{Rajala12} from \cite[Theorem 1]{Rajala12}. Then~\eqref{eqn: poincare} with general $\sobolevpower$ follows by H\"older's inequality.

    Next, by the Bishop--Gromov inequality (see Remark~\ref{rem: RCD prop}~\eqref{item: bishop gromov}) we can see that $\m$ is doubling on $\Omega$ with a constant depending on $K$, $N$, and the radius of $\Omega$ (see \cite[Corollary 30.14]{Villani03book}), and also for any $x\in \Omega$ and $r\leq \diam(\Omega)$ we have
    \begin{align*}
     \m(B_r(x))\geq C\left(\frac{r}{\diam(\Omega)}\right)^N\m(\Omega)
    \end{align*}
    for some $C>0$, depending on $K$, $N$, and $\Omega$.
    Since $(X, {\sf d})$ is a geodesic space, from  \cite[Corollary 9.5]{HK} we can apply \cite[Theorem 9.7]{HK} to obtain the conclusion of Theorem~\ref{thm: W1p sobolev} for $\sobolevpower>1$ without using the truncation property of the pair $f$ and $G$.  
    Finally, we can still apply \cite[Theorem 9.7]{HK} in the case $\sobolevpower=1$ if $G\in L^1(X)$, as the pair $f$ and $G$ satisfy the truncation property \cite[Definition, p.~9]{HK}, by Corollary~\ref{lem:compositeUG*}, and the inequality is trivially true if $G\not\in L^1(X)$.
\end{proof}

We are now ready to give the proof of the higher order Morrey's inequality.

\begin{apf}{Theorem~\ref{thm: W2,p morrey}}
     Let $p\in[\,1,\,+\infty\,[$, and fix $f\in W^{\hspace{0.03cm}2, p}_\ast(X)$ and an open ball $B\subset X$; by definition, there exists some $G\in \mathcal{UG}^{\,p}(f)\cap W^{1, p}(X)$. 

     \begin{enumerate}[{\rm (1)}]
     \item  Suppose $p>N$. 
     Since $W^{2,p}_*(X)\subset W^{1,p}(X)$, 
     the assertion follows from Theorem~\ref{thm: W1p sobolev}~\eqref{item: morrey ineq} directly.   
  \item Suppose $p=N$. Since $G\in W^{1, p}(X)$, we may apply Theorem~\ref{thm: W1p sobolev}~\eqref{item: morrey equal} to the pair $(G, \lvert DG\rvert)$ with $\uppergradpower=\sobolevpower=p=N$ there, combined with triangle inequality for $L^N$ and the inequality $x^{\,q}\leq 
 C_qe^x$ $(x\geq0)$ with 
 \begin{align*}
 C_q:=\max\left\{\dfrac{\Gamma([q]+2)}{\Gamma(q+1)}, \dfrac{\Gamma([q]+1)}{\Gamma(q+1)} \right\}    
 \end{align*}
 via $\Gamma$-functions to obtain, 
  for any $q\geq0$
     \begin{align*}
         \lVert G\rVert_{L^{\,q}(B)}
         \leq C\left(\lVert G\rVert_{L^N(X)}+\lVert \lvert DG\rvert\rVert_{L^N(X)}\right)
     \end{align*}
     where $C>0$ depends on the ball $B$ and $q$, in particular $G\in L^q(B)$ for any $q\geq0$. For any $q>N$, we can apply Theorem~\ref{thm: W1p sobolev}~\eqref{item: morrey ineq} with $\uppergradpower=p$ and $\sobolevpower=q$  there to the pair $(f, G)$ which yields the theorem.
     
    \item  Suppose $N/2<p<N$. 
     Since $p<N$ and $G\in W^{1, p}(X)$, we may apply Theorem~\ref{thm: W1p sobolev}~\eqref{item: sobolev ineq} to the pair $(G, \lvert DG\rvert)$ with $\uppergradpower=\sobolevpower=p$ there, combined with the triangle inequality for $L^p$ norms to obtain
     \begin{align*}
         \lVert G\rVert_{L^{\frac{Np}{N-p}}(B)}
         \leq C\left(\lVert G\rVert_{L^p(X)}+\lVert \lvert DG\rvert\rVert_{L^p(X)}\right)
     \end{align*}
     where $C>0$ depends on the ball $B$, in particular $G\in L^{\frac{Np}{N-p}}(B)$. As $p>N/2$, we can see $\frac{Np}{N-p}>N$ and
    \begin{align*}
        1-\frac{N}{\frac{Np}{N-p}}=2-\frac{N}{p}>0,
    \end{align*}
    thus applying Theorem~\ref{thm: W1p sobolev}~\eqref{item: morrey ineq} with $\uppergradpower=p$ and $\sobolevpower=\frac{Np}{N-p}$  there to the pair $(f, G)$ yields the theorem.
     \end{enumerate}
\end{apf}

\section{Existence of  \texorpdfstring{$W^{\hspace{0.03cm}2, p}_\ast(X)$}{} and \texorpdfstring{$H^{\hspace{0.03cm}2, p}_\ast(X)$}{} functions}\label{sec: W2,p functions} 
In this section, we will discuss existence of functions in $H^{\hspace{0.03cm}2, p}_\ast(X)$ and $W^{\hspace{0.03cm}2, p}_\ast(X)$, under different conditions on $(X, {\sf d}, \m)$.
\subsection{There are many $W^{\hspace{0.03cm}2, p}_\ast(X)$ functions}

Fix $p\in [\,1, \,+\infty\,[$. First assume that $\m$ has finite $p$th moment, meaning that for some point $x_0\in X$ 
\begin{align*}
    \int_X {\sf d}(x, x_0)^p \d\m(x)<\infty.
\end{align*}
Note that since open balls have finite $\m$-measure, this implies 
\begin{align*}
    \m(X)
    &= \m(X\setminus B_1(x_0))+\m(B_1(x_0))
    \leq \int_{X\setminus B_1(x_0)} {\sf d}(x, x_0)^p \d\m(x)+\m(B_1(x_0))<\infty,
\end{align*}
thus, we also have $\int_X {\sf d}(x, \tilde x_0)^p \d\m(x)<\infty$ for any other point $\tilde x_0\in X$. Then we see that $\lip(X)\subset W_*^{\hspace{0.03cm}2,p}(X)$. Indeed, if $f\in \lip(X)$ then it is clear that $G\equiv \lip(f)$ is a $p$-weak upper gradient of $f$ for any $p\in [\,1,\, +\infty\,[$ with $G\in W^{1, p}(X)$, while we have
\begin{align*}
    \int_X \lvert f\rvert^p\d\m
    &\leq 2^{p-1}\int_X (\lip(f){\sf d}(x, x_0)^p+\lvert f(x_0)\rvert^p)\d\m(x)<\infty,
\end{align*}
hence $f\in W^{1, p}(X)$.

For general $\m$, we can show that $\lip(X)_{\rm bs}\subset W_*^{\hspace{0.03cm}2,p}(X)$. To see this, take $f\in \lip(X)_{\rm bs}$ which is zero outside a ball $B_{r_0}(x_0)$, then by \cite[Lemma 6.2.14]{GPLecture} there exists a nonnegative function $G\in W^{1, p}(X)$ such that $G\equiv \lip(f)$ on $B_{r_0}(x_0)$ and $G\equiv 0$ on $X\setminus B_{2r_0}(x_0)$. Now, let $\gamma\in C([0,1], X)$ be arbitrary. If $\gamma_0$, $\gamma_1\in B_{r_0}(x_0)$, by continuity we have 
\begin{align*}
    t_0:&=\sup\{t\in [0, 1]\mid \gamma_s\in B_{r_0}(x_0),\ \forall s\in [\,0, \,t\,[\,\}>0,\\
    t_1:&=\inf\{t\in [0, 1]\mid \gamma_s\in B_{r_0}(x_0),\ \forall s\in \,]\,t,\, 1\,]\,\}<1.
\end{align*}
If $t_0=1$ or $t_1=0$, then $\gamma_t\in B_{r_0}(x_0)$ for all $t\in [0, 1]$, hence 
\begin{align*}
    \lvert f(\gamma_0)-f(\gamma_1)\rvert
    &\leq \lip(f) {\sf d}(\gamma_0, \gamma_1)
    \leq \int_0^1 G(\gamma_t)\lvert \dot \gamma_t\rvert\, \d t.
\end{align*}
Otherwise, $t_0$ and $t_1\in ]0, 1[$ with $f(\gamma_{t_0})=f(\gamma_{t_1})=0$, hence
\begin{align*}
    \lvert f(\gamma_0)-f(\gamma_1)\rvert
    &\leq \lvert f(\gamma_0)-f(\gamma_{t_0})\rvert+\lvert f(\gamma_{t_1})-f(\gamma_1)\rvert
    \leq \lip(f) ({\sf d}(\gamma_0, \gamma_{t_0})+{\sf d}(\gamma_{t_1}, \gamma_1))\\
    &\leq \int_0^1 G(\gamma_t)\lvert \dot \gamma_t\rvert \,\d t.
\end{align*}
If $\gamma_0\in B_{r_0}(x_0)$, $\gamma_1\not\in B_{r_0}(x_0)$, we have $t_0\in ]0, 1[$ with $f(\gamma_{t_0})=f(\gamma_{1})=0$ hence
\begin{align*}
    \lvert f(\gamma_0)-f(\gamma_1)\rvert
    &= \lvert f(\gamma_0)-f(\gamma_{t_0})\rvert
    \leq \lip(f) {\sf d}(\gamma_0, \gamma_{t_0})
    \leq \int_0^1 G(\gamma_t)\lvert \dot \gamma_t\rvert\, \d t.
\end{align*}
A symmetric argument holds when $\gamma_0\not\in B_{r_0}(x_0)$, $\gamma_1\in B_{r_0}(x_0)$, and the same inequality is clear when $\gamma_0$, $\gamma_1\not\in B_{r_0}(x_0)$, thus integrating against any $q$-test plan for $q\in \,]\,1, \,+\infty\,]$ implies that $G\in \mathcal{UG}^{\hspace{0.03cm}p}(f)$ for any $p\in [\,1,\, +\infty\,[$, and clearly $G\in W^{1, p}(X)$. Since $\lip(X)_{\rm bs}\subset L^p(X)$, this yields the claimed inclusion.

\begin{remark}\label{rem: W1p not contained in W2p*}
    We note that although $W^{2, p}_*(X)$ contains all Lipschitz functions with bounded support, hence is larger than $W^{2, p}(X)$ when $(X, {\sf d}, \m)$ is a smooth space, we still do not expect $W^{1, p}(X)\subset W^{2, p}(X)$. Indeed, if $(X, {\sf d}, \m)$ is a bounded set in $\R^N$ (say, containing the origin) equipped with the usual Euclidean metric and $N$-dimensional Lebesgue measure, the function $f_\alpha(x):=\alpha^{-1}\lvert x\rvert^\alpha$ belongs to $W^{1, p}(X)$ whenever $\alpha>1-N/p$. Now suppose that $G\in \mathcal{UG}^p(f_\alpha)\cap W^{1, p}(X)$, this implies that $G(x)\geq \lvert x\rvert^{\alpha-1}$ Lebesgue a.e. on $X$. If $p>N$, by Morrey's embedding we would have that $G$ is essentially bounded, which is a contradiction if $\alpha\in \,]\,1-N/p,\, 1\,[$. If $p<N$, by the Sobolev embedding, this would imply $G\in L^{\frac{np}{N-p}}(X)$, hence $\lvert \cdot \rvert^{\alpha-1}\in L^{\frac{np}{N-p}}(X)$, however this is a contradiction if $\alpha\in \,]\,1-N/p,\, 2-N/p\,[$. Finally, if $p=N$ then $G\in L^r(X)$ for any $r\in [\,1,\, +\infty\,[$, which again yields a contradiction if $\alpha\in \,]\,1-N/p, \,1\,[$. Thus for the above ranges of $\alpha$, we have $f_\alpha\in W^{1, p}(X)\setminus W^{2, p}_*(X)$.
\end{remark}

\subsection{Limits of Riemannian manifolds}

In this subsection we show that in some cases, $H^{2, p}_*(X)$ contains many functions. Since we will need to keep track of varying measures, we will reintroduce the reference measure into the notation for various function spaces.

In what follows, $P_\varphi f$ will be the mollified heat flow of $f$ with respect to $\varphi\in C^\infty_c(]\,0,\, +\infty\,[)_+$ defined by \eqref{eqn: mollified heat flow}; recall that $P_{\varphi}f\in W^{1, 2}(X; \m)$ if $f\in L^2(X; \m)$, and $P_{\varphi}f\in {\rm Test}^{\infty}(X)$ if $f\in W^{1, p}(X; \m)_{\rm bs}\cap W^{1, 2}(X; \m)\cap L^\infty(X; \m)$.
\begin{lem}\label{lem: grad squared rep}
    For any $\varphi\in C^\infty_c(]\,0,\, +\infty\,[)_+$ and $f\in L^2(X; \m)$, we have
    \begin{align*}
        \lvert DP_\varphi f\rvert^2=\int_0^\infty\int_0^\infty \langle DP_tf, DP_sf\rangle \varphi(t)\varphi(s)\d t\d s,
    \end{align*}
    where the expression on the right is a Bochner integral valued in $L^1(X; \m)$; in particular the maps
    \begin{align}
    \begin{split}\label{eqn: measurable maps}
        t&\mapsto \langle DP_tf, DP_sf\rangle \varphi(t)\varphi(s),\qquad \text{each fixed }s\in ]0, +\infty[,\\
        s&\mapsto \int_0^\infty \langle DP_tf, DP_sf\rangle \varphi(t)\varphi(s)\d t,
    \end{split}
    \end{align}
    are strongly measurable as $L^1(X; \m)$-valued maps.
\end{lem}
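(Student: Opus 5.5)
The plan is to write $P_\varphi f=\int_0^\infty \varphi(t)P_tf\,\d t$ as a Bochner integral in $W^{1,2}(X;\m)$ and to use the bilinearity and continuity of $(u,v)\mapsto\langle Du,Dv\rangle$ from $W^{1,2}\times W^{1,2}$ into $L^1(X;\m)$. The product of two Bochner integrals should then become the double integral of the pointwise products.

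\textbf{Step 1: Bochner integrability of $t\mapsto\varphi(t)P_tf$ in $W^{1,2}(X;\m)$.} Since $\varphi$ has compact support in $]0,+\infty[$, say in $[a,b]$ with $a>0$, the standard heat flow estimate
\begin{align*}
\lVert\,\lvert DP_tf\rvert\,\rVert_{L^2}^2\le \lVert f\rVert_{L^2}^2/(2t)
\end{align*}
gives a uniform $W^{1,2}$ bound on $[a,b]$. Moreover, $t\mapsto P_tf$ is continuous from $]0,+\infty[$ into $W^{1,2}(X;\m)$. This follows from analyticity of the semigroup, or from $\mathscr{E}(P_tf-P_sf)\to 0$ via the spectral theorem. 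Hence $t\mapsto\varphi(t)P_tf$ is continuous and bounded into the separable Banach space $W^{1,2}(X;\m)$, so it is strongly measurable and Bochner integrable. Its Bochner integral coincides with the $L^2$-valued integral defining $P_\varphi f$, because the inclusion $W^{1,2}\hookrightarrow L^2$ is continuous. In particular $P_\varphi f\in W^{1,2}$.

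\textbf{Step 2: measurability of the maps in \eqref{eqn: measurable maps}.} The map $\Phi(u,v):=\langle Du,Dv\rangle$ is bilinear with $\lVert\Phi(u,v)\rVert_{L^1}\le\lVert u\rVert_{W^{1,2}}\lVert v\rVert_{W^{1,2}}$ by Cauchy--Schwarz, so it is continuous. Therefore $(t,s)\mapsto\varphi(t)\varphi(s)\Phi(P_tf,P_sf)$ is continuous, bounded and compactly supported as an $L^1$-valued map. This gives strong measurability in $t$ for fixed $s$, and strong measurability of the inner integral as a function of $s$, since that integral is continuous in $s$ by dominated convergence.

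\textbf{Step 3: the identity.} A bounded linear operator commutes with the Bochner integral. For fixed $v$, the map $u\mapsto\Phi(u,v)$ is bounded linear $W^{1,2}\to L^1$, so
\begin{align*}
\Phi(P_\varphi f,v)=\int_0^\infty\varphi(t)\Phi(P_tf,v)\,\d t.
\end{align*}
Apply this with $v=P_sf$, then apply the same fact in the second slot with $v=P_\varphi f$, and exchange the two linear operations. This yields
\begin{align*}
\lvert DP_\varphi f\rvert^2=\Phi(P_\varphi f,P_\varphi f)=\int_0^\infty\!\!\int_0^\infty\Phi(P_tf,P_sf)\varphi(t)\varphi(s)\,\d t\,\d s.
\end{align*}

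The main obstacle is Step 1: justifying continuity, or at least strong measurability, of $t\mapsto P_tf$ into $W^{1,2}$ away from $t=0$, together with the identification of the $W^{1,2}$ Bochner integral with the definition of $P_\varphi f$. I would handle this with the spectral calculus for the self-adjoint operator $-\Delta$. That calculus gives
\begin{align*}
\mathscr{E}(P_tf-P_sf)=\int\lambda\,(\e^{-\lambda t}-\e^{-\lambda s})^2\,\d\mu_f(\lambda)\to0
\end{align*}
by dominated convergence, using the bound $\lambda\e^{-2\lambda a}\le C_a$.
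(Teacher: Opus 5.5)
Your proposal is correct and is essentially the paper's argument: the paper also treats $h\mapsto\langle Dh,Dg\rangle$ as a bounded linear map $W^{1,2}(X;\m)\to L^1(X;\m)$, bounds $\lVert\lvert DP_sf\rvert\rVert_{L^2}$ by $\lVert f\rVert_{L^2}/\sqrt{2s}$, and applies Hille's theorem twice to pass both Bochner integrals through the bilinear form. The only difference is how strong measurability of $s\mapsto P_sf$ in $W^{1,2}$ is obtained: the paper takes it from the weak-continuity-plus-Pettis argument in its appendix, while your spectral-calculus proof of norm continuity on $]0,+\infty[$ is more direct for $f\in L^2$ and also gives continuity of the integrands outright.
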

\begin{pf}
    For any fixed $g\in W^{\hspace{0.03cm}1, 2}(X; \m)$, define the operator $T_g: W^{\hspace{0.03cm}1, 2}(X; \m)\to L^1(X; \m)$ by
    \begin{align*}
        T_g(h):=\langle Dh, Dg\rangle,
    \end{align*}
    this is clearly linear. Since
    \begin{align*}
        \lVert T_g(h)\rVert_{L^1(X; \m)}
        =\int_X \lvert \langle Dh, Dg\rangle\rvert \d\m
        &\leq \lVert \lvert Dh\rvert\rVert_{L^2(X; \m)}\lVert \lvert Dg\rvert\rVert_{L^2(X; \m)}
        \\&\leq \lVert  h\rVert_{W^{\hspace{0.03cm}1,2}(X;\m)}\lVert  g\rVert_{W^{\hspace{0.03cm}1, 2}(X; \m)}
    \end{align*}
     we see $T_g$ is bounded, hence continuous. We now claim that the map $s\mapsto T_g(P_sf\varphi(s))$ belongs to $L^1(]\,0,\, +\infty\,[;$ $ \d t, L^1(X; \m))$. From the proof of Lemma~\ref{lem:denseness} we can see that $s\mapsto P_sf\varphi(s)$ is strongly measurable as a $W^{\hspace{0.03cm}1, 2}(X; \m)$-valued map, since composition with a continuous mapping preserves strong measurability, 
    $s\mapsto T_g(P_sf\varphi(s))$ is strongly measurable as an $L^1(X; \m)$-valued map. Then we calculate, using~\cite[Remark 5.2.11]{GPLecture},
    \begin{align*}
        \int_0^\infty \lVert T_g(P_sf\varphi(s))\rVert_{L^1(X; \m)}\d s
        &=\int_0^\infty \lvert\varphi(s)\rvert\int_X\lvert \langle DP_sf, Dg\rangle\rvert \d \m \d s\\
        &\leq \sup\lvert \varphi\rvert\lVert g\rVert_{W^{\hspace{0.03cm}1, 2}(X; \m)}\int_{\supp \varphi}\lVert \lvert DP_sf\rvert\rVert_{L^2(X; \m)}\,\d s\\
        &\leq\sup\lvert \varphi\rvert\lVert g\rVert_{W^{\hspace{0.03cm}1, 2}(X; \m)}\int_{\supp \varphi}\frac{\lVert f\rVert_{L^2(X; \m)}}{\sqrt{2s}}  \d s<\infty,
    \end{align*}
    proving the claim.

    Now by applying Hille's theorem (see, for example, \cite[Theorem 1.3.15]{GPLecture}) twice, we find that
    \begin{align*}
        \lvert DP_\varphi f\rvert^2
        &=T_{P_\varphi f}(P_\varphi f)
        =T_{P_\varphi f}\left(\int_0^\infty P_sf\varphi(s)\d s\right)
        =\int_0^\infty T_{P_\varphi f}(P_sf\varphi(s))\d s\\
        &=\int_0^\infty \langle DP_\varphi f, DP_sf\rangle \varphi(s)\d s
        =\int_0^\infty T_{P_s f}(P_\varphi f)\varphi(s)\d s\\
        &=\int_0^\infty T_{P_s f}\left(\int_0^\infty P_tf\varphi(t)\d t\right)\varphi(s)\d s
        =\int_0^\infty \int_0^\infty T_{P_s f}(P_tf\varphi(t))\d t\,\varphi(s)\d s\\
        &=\int_0^\infty\int_0^\infty \langle DP_tf, DP_sf\rangle \varphi(t)\varphi(s)\d t\,\d s,
    \end{align*}
    finishing the proof.
\end{pf}
We now recall some basic definitions from \cite{AmbrosioHonda17}.
\begin{defn}
 We write $C_{\rm bs}(X)$ to denote the set of all continuous, bounded functions on $(X, {\sf d})$ with bounded support. Denote by 
    $\mathcal{M}_{\rm loc}(X)$ the family of Borel measures having finite mass for 
    $\sf {d}$-bounded Borel sets.
    Then a sequence $\{\m_n\}_{n\in \N}\subset \mathcal{M}_{\rm loc}(X)$ \emph{converges weakly} to $\m\in \mathcal{M}_{\rm loc}(X)$ if
    \begin{align*}
        \lim_{n\to\infty}\int_X \varphi\, \d\m_n=\int_X \varphi\, \d\m
    \end{align*}
    for all $\varphi\in C_{\rm bs}(X)$.

    If $\{\m_n\}_{n\in \N}\subset \mathcal{M}_{\rm loc}(X)$ converges weakly to $\m\in \mathcal{M}_{\rm loc}(X)$, we say a sequence of functions $\{f_n\}_{n\in \N}\subset L^2(X; \m_n)$ for all $n\in \N$ \emph{$L^2$-weakly converges} to $f\in L^2(X; \m)$ if $\{f_n\m_n\}_{n\in \N}$ weakly converges to $f\m$ as $n\to\infty$ and $\varlimsup_{n\to\infty}\lVert f_n\rVert_{L^2(X; \m_n)}<\infty$. The sequence \emph{$L^2$-strongly converges} to $f$ if in addition, $\varlimsup_{n\to\infty}\lVert f_n\rVert_{L^2(X; \m_n)}\leq \lVert f\rVert_{L^2(X; \m)}$. 

We say $\{f_n\}_{n\in \N}\subset L^1(X; \m_n)$ for all $n\in \N$ \emph{$L^1$-strongly converges} to $f\in L^1(X; \m)$ if the sequence $\sigma\circ f_n$ $L^2$-strongly converges to $\sigma\circ f$, where $\sigma: \R\to \R$ is defined by $\sigma(z):=\sgn(z)\sqrt{\lvert z\rvert}$.
    
 A sequence $\{f_n\}_{n\in \N}\subset W^{\hspace{0.03cm}1,2}(X; \m_n)$ 
 \emph{$W^{\hspace{0.03cm}1, 2}$-weakly converges} to $f\in W^{\hspace{0.03cm}1, 2}(X; \m)$ if it $L^2$-weakly converges and $\sup_{n\in \N}\lVert \lvert Df_n\rvert\rVert_{L^2(X; \m_n)}<\infty$. Finally, the sequence \emph{$W^{\hspace{0.03cm}1, 2}$-strongly converges} if it $L^2$-strongly converges, and $\lim_{n\to\infty}\lVert \lvert Df_n\rvert\rVert_{L^2(X; \m_n)}=\lVert \lvert Df\rvert\rVert_{L^2(X; \m)}$.
\end{defn}

We now make some assumptions on the space $(X, {\sf d}, \m)$ in order to prove the existence of $H^{\hspace{0.03cm}2, p}_\ast(X; \m)$ functions. Specifically:
\begin{assumption}\label{assumption}
{\rm We assume there is a sequence of $N$-dimensional Riemannian manifolds $\{(M_n, g_n)\}_{n\in \N}$ whose Riemannian curvature tensors satisfy $\lvert\Riem_{g_n}\rvert_{\rm HS}\leq K_0$ for some $K_0>0$, the metric measure spaces $(M_n, {\sf d}_{g_n}, \Vol_{g_n})$ (where ${\sf d}_{g_n}$ and $\Vol_{g_n}$ are the geodesic distance and canonical volume associated to $g_n$ respectively) converge in the measured Gromov--Hausdorff sense to $(X, {\sf d}, \m)$, and
\begin{align}
    &\m(X)<\infty,\notag\\
    &\sup_{n\in \N}\int_{M_n}\min(1, {\sf d}_{g_n}(x, \partial M_n))^{\frac{N^2-4N-4}{N}}\d\Vol_{g_n}(x)<\infty,\label{assumption: distance integral bound}\\
    &\inf_{n\in \N}\inf_{x\in M_n}\Vol_{g_n}(B_r(x))>0\text{ for any }r>0.\label{assumption: volume lower bound}
\end{align}
As in \cite{AmbrosioHonda17}, we may assume there is a sequence of isometric embeddings $\iota_n$ of $(M_n, {\sf d}_{g_n})$ into $(X, {\sf d})$, with $\m_n:=(\iota_n)_\sharp \Vol_{g_n}$ converging weakly to $\m$ in $\mathcal{M}_{\rm loc}(X)$.
}
\end{assumption}
Regarding the above assumptions, first note if $N\geq 5$, then $N^2-4N-4\geq 0$. Since $\Vol_{g_n}(M_n)$ is uniformly bounded from the measured Gromov--Hausdorff convergence and finiteness of $\m(X)$, we see condition~\eqref{assumption: distance integral bound} will follow. Of course if $\partial M_n=\emptyset$ for all $n$, the condition is also trivially satisfied. Condition~\eqref{assumption: volume lower bound} is more subtle, but is known to hold under certain conditions. For example, since we assume a uniform bound on the Riemannian curvatures of $M_n$, if in addition, $(M_n, g_n)$ are open and complete with all sectional curvatures positive, combining \cite[Thm. III.4.2.]{Chavel06book}, condition~\eqref{assumption: volume lower bound}, and \cite[p.17]{BuragoZalgaller77} yields~\eqref{assumption: volume lower bound}.

\begin{prop}\label{prop: lipschitz grad bound}
    Under Assumption~\ref{assumption}, for any $f\in L^2(X; \m)$ and $\varphi\in C_c^\infty(]\,0,\, +\infty\,[)_+$, 
    \begin{align*}
        \lvert D\lvert D P_\varphi f\rvert\rvert\in L^\infty(X; \m).
    \end{align*}
\end{prop}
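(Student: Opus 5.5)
We work on the approximating manifolds, obtain bounds there that are uniform in $n$, and pass to the limit using the stability theory of \cite{AmbrosioHonda17}.

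\textbf{Step 1: approximate $f$.} Take $f_n\in L^2(X;\m_n)$ that $L^2$-strongly converge to $f$. This is possible by \cite{AmbrosioHonda17}, for instance by restricting a $C_{\rm bs}$ approximation of $f$. Set $u_n:=P^{(n)}_\varphi f_n$, the mollified heat flow on $M_n$. The heat flow is stable under measured Gromov--Hausdorff convergence, and so is the mollification, since $\varphi$ has compact support in $]\,0,+\infty\,[$. Hence $u_n\to P_\varphi f$ $W^{1,2}$-strongly. Also $\lvert Du_n\rvert\to\lvert DP_\varphi f\rvert$ $L^2$-strongly, by the Lemma~\ref{lem: grad squared rep} representation together with the strong convergence of $\langle DP_t f_n,DP_s f_n\rangle$ in $L^1$.

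\textbf{Step 2: a uniform Lipschitz bound for $\lvert\nabla u_n\rvert$.} On the smooth manifold $M_n$, Kato's inequality gives
\[
\lvert\nabla\lvert\nabla u_n\rvert\rvert\le\lvert\Hess u_n\rvert_{\rm HS}
\]
wherever $\nabla u_n\neq0$. It therefore suffices to show
\[
\sup_n\lVert\Hess u_n\rVert_{L^\infty}<\infty .
\]
Write $u_n=\int_0^\infty P_tf_n\,\varphi(t)\,\d t$. For $t\in\supp\varphi\subset[a,b]$ with $a>0$, use the heat kernel $p^{(n)}_t$. The assumptions $\lvert\Riem_{g_n}\rvert\le K_0$ and the uniform volume lower bound \eqref{assumption: volume lower bound} give uniform Gaussian bounds for $p^{(n)}_t$ and for its first two spatial covariant derivatives. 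These come from Li--Yau/Hamilton-type estimates, or from Bernstein-type maximum principle arguments applied to $\lvert\Hess P_t f\rvert^2$, which evolves under a Bochner formula whose curvature terms are controlled by $K_0$.

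The boundary $\partial M_n$ has to be handled separately. Near the boundary the Hessian bounds degenerate like a negative power of ${\sf d}_{g_n}(x,\partial M_n)$. The integrability condition \eqref{assumption: distance integral bound}, with its specific exponent $\frac{N^2-4N-4}{N}$, is what makes the resulting weights integrable. It is combined with Hölder's inequality and $\lVert f_n\rVert_{L^2}$ via $\int\lvert\nabla^2_x p_t(x,y)\rvert\,\lvert f_n(y)\rvert\,\d y$. The target is
\[
\lvert\Hess u_n\rvert(x)\le C(K_0,N,\varphi)\,\sup_n\lVert f_n\rVert_{L^2}
\]
with a constant independent of $n$. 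I expect this step to be the main obstacle: producing the uniform $L^\infty$ Hessian bound, especially near $\partial M_n$ and under only $L^2$ control of $f_n$. I would first try the heat kernel derivative estimates, using the volume lower bound to turn on-diagonal decay into a pointwise bound. If the boundary behaviour resists this, I would try instead a weighted estimate using the distance integral assumption.

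\textbf{Step 3: pass to the limit.} By Step 2, $\lvert\nabla u_n\rvert$ is $L$-Lipschitz with $L$ independent of $n$. The functions $\lvert\nabla u_n\rvert\circ\iota_n^{-1}$ are uniformly Lipschitz and, by the bound on $\lvert\nabla u_n\rvert$ via heat kernel gradient bounds, locally uniformly bounded. By Arzelà--Ascoli in the Gromov--Hausdorff setting, a subsequence converges uniformly on bounded sets to an $L$-Lipschitz function $\Phi$ on $X$. The $L^2$-strong convergence from Step 1 identifies $\Phi=\lvert DP_\varphi f\rvert$ $\m$-a.e. Finally, since $(X,{\sf d})$ is geodesic, the constant $L$ is a weak upper gradient of the Lipschitz function $\Phi$, so
\[
\lvert D\lvert DP_\varphi f\rvert\rvert\le L\quad \m\text{-a.e.},
\]
which is the claim.
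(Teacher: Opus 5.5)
The gap is in Step~2, and it is more than an unfinished estimate: the mechanism you propose near $\partial M_n$ cannot give what Step~3 needs. Step~3 requires a constant $L$, independent of $n$, with $\lvert\nabla\lvert\nabla u_n\rvert\rvert\le L$ \emph{pointwise}. Hypothesis \eqref{assumption: distance integral bound} only bounds $\int_{M_n}\rho_n^{(N^2-4N-4)/N}\,\d\Vol_{g_n}$, where $\rho_n:=\min(1,{\sf d}_{g_n}(\cdot,\partial M_n))$. For $N\le 4$ this weight blows up at $\partial M_n$, and integrability in the evaluation point $x$ says nothing about $\sup_x$. Applying H\"older in $y$ to $\int\lvert\nabla_x^2p_t(x,y)\rvert\lvert f_n(y)\rvert\,\d y$ does not touch the degeneration in $x$. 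So when $\partial M_n\neq\emptyset$ and $N\le4$, your route gives at best $\sup_n\lVert\nabla\lvert\nabla u_n\rvert\rVert_{L^2}<\infty$. That yields no equi-Lipschitz family for Arzel\`a--Ascoli and no $L^\infty$ bound in the limit. This matches how the paper uses the hypothesis: only to get $\sup_n\lVert D\langle DP^n_tf_n,DP^n_sf_n\rangle\rVert_{L^2}<\infty$, which feeds the $W^{1,2}$-weak convergence of $\lvert DP^n_{\tilde\varphi}f_n\rvert^2$. The paper's $L^\infty$ conclusion rests instead on the sup bound \eqref{eqn: hess uniform bound}. That bound comes from a pointwise estimate carrying the factor $1+\rho_n^{(N^2-4N-4)/(2N)}$, which is uniformly bounded only when $\partial M_n=\emptyset$ or $N\ge5$.

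At points a definite distance from $\partial M_n$ the Hessian bound you want is available, but you have not supplied it, and your suggested justifications do not deliver it. In $(\partial_t-\Delta)\lvert\nabla^2u\rvert^2$, commuting covariant derivatives produces a term $\nabla\Riem*\nabla u*\nabla^2u$, which $\lvert\Riem_{g_n}\rvert_{\rm HS}\le K_0$ does not control. Li--Yau and Hamilton gradient estimates are first order, and Hamilton's matrix Harnack estimate needs nonnegative sectional curvature and parallel Ricci. The paper gets second-order control from three ingredients:
\begin{enumerate}[(i)]
\item a uniform bound on $\sup\lvert P^n_t\tilde f_n\rvert$ for $t\in[t_0/2,t_0^{-1}]$, from the heat kernel upper bound of \cite{JiangLiZhang16} together with \eqref{assumption: volume lower bound}, see \eqref{eqn: uniform sup bound};
\item the uniform gradient bound \eqref{eqn: uniform grad bound}, obtained by pre-smoothing $f_n:=P^n_{2t_0}\tilde f_n$ and applying \cite[Proposition 6.1.6]{GPLecture};
\item the local Hessian estimate \cite[Corollary 2.2]{Sung18} for solutions of the heat equation, expressed through $\sup\lvert u\rvert$.
\end{enumerate}
Once a uniform pointwise bound is in hand, your Steps~1 and~3 are a valid and simpler alternative to the paper's limit argument. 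The paper instead uses $W^{1,2}$-weak convergence, \cite[Lemma 1.5.8]{AmbrosioHonda17} on balls, and Lebesgue differentiation. Because you pass through Kato's inequality, you also bound $\lvert D\lvert DP_\varphi f\rvert\rvert$ itself, whereas the paper's argument as written controls $\lvert D\lvert DP_\varphi f\rvert^2\rvert$.
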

\begin{pf}
   Take a sequence $\tilde{f}_n\in L^2(X; \m_n)$ which $L^2$-strongly converges to $f$ (see comment in \cite[Section 6]{AMS}), let $t_0>0$ be such that $\supp \varphi\subset ]3t_0, \frac{1}{3t_0}[$, and define $\tilde{\varphi}(t):=\varphi(t+2t_0)$; then note that $\supp \tilde\varphi\subset ]t_0, t_0^{-1}[$. We also write $(P_t^n)_{t\geq 0}$ for the heat flow on $(X, {\sf d})$ with respect to $\m_n$ and $P^n_\varphi$ for the associated mollified heat flow with respect to $\varphi$, recall~\eqref{eqn: mollified heat flow}. Then if we define $f_n:=P_{2t_0}^n\tilde{f}_n\in W^{\hspace{0.03cm}1, 2}(X; \m_n)$, by \cite[Remark 5.2.11]{GPLecture} we have
   \begin{align*}
       \sup_{n\in \N}\lVert \lvert Df_n\rvert\rVert^2_{L^2(X; \m_n)}
       \leq \sup_{n\in \N}\frac{\lVert \tilde{f}_n\rVert^2_{L^2(X; \m_n)}}{4t_0}<\infty.
   \end{align*}
We also record a number of estimates for later use. First by applying the upper heat kernel bound in \cite[Theorem 1.2]{JiangLiZhang16} with $\varepsilon=1$ there, writing $p_t^n(x, y)$ for the heat kernel associated to $(P^n_t)_{t\geq 0}$, for some $C_1$, $C_2>0$ depending only on $K_0$ and $N$,
\begin{align}\label{eqn: uniform sup bound}
    \lvert P^n_t\tilde{f}_n(x)\rvert
    &=\left \lvert \int_X \tilde{f}_n(y)p^n_{t}(x, y)\d\m_n(y)\right\rvert\notag\\
    &\leq \lVert \tilde{f}_n\rVert_{L^2(X; \m_n)}\lVert p^n_{t}(x, \cdot)\rVert_{L^2(X;\m_n)}\notag\\
    &\leq \lVert \tilde{f}_n\rVert_{L^2(X; \m_n)}\lVert \sqrt{p^n_{2t}(x, x)}\\
    &\leq \frac{C_1e^{C_2t}}{\sqrt{\m_n(B_{\sqrt{2t}}(x))}}\lVert \tilde{f}_n\rVert_{L^2(X; \m_n)}\notag\\
    &\leq \frac{C_1e^{C_2t_0^{-1}}}{\sqrt{\m_n(B_{\sqrt{t_0}}(x))}}\lVert \tilde{f}_n\rVert_{L^2(X; \m_n)}\notag
\end{align}
whenever $t\in [\,t_0/2, \,t_0^{-1}\,]$. Since $\lvert\Riem_{g_n}\rvert_{HS}$ is uniformly bounded in $n$, the last expression above is bounded from above uniformly in $n\in \N$, $x\in \supp [\m_n]$, and $t\in [t_0/2, t_0^{-1}]$. Combining this with \cite[Proposition 6.1.6]{GPLecture}, we find another $C_3>0$ depending only on $K_0$, $N$, $t_0$, and $\sup_{n\in \N}\lVert \tilde{f}_n\rVert_{L^2(X; \m_n)}$ such that
\begin{align}\label{eqn: uniform grad bound}
    \sup_{n\in \N}\sup_{(t, x)\in [0, t_0^{-1}]\times \supp [\m_n]}\lvert DP^n_t f_n\rvert^2(x)
    &=\sup_{n\in \N}\sup_{(t, x)\in [0, t_0^{-1}]\times \supp [\m_n]}\lvert DP^n_{t+t_0} P^n_{t_0}\tilde{f}_n\rvert^2(x)\leq \frac{C_3}{t_0}.
\end{align}

   Next we claim that the sequence of functions $\lvert DP^n_{\tilde{\varphi}} f_n\rvert^2$ $W^{\hspace{0.03cm}1, 2}$-weakly converges to $\lvert DP_{\tilde{\varphi}} P_{2t_0}f\rvert^2$. Note by \cite[(6.9) and Lemma 6.1.9]{GPLecture}, we immediately obtain that $\lvert DP_{\tilde{\varphi}}^n f_n\rvert^2\in W^{\hspace{0.03cm}1, 2}(X; \m_n)$ for each $n\in \N$ and $\lvert DP_{\tilde{\varphi}} P_{2t_0}f\rvert^2\in W^{\hspace{0.03cm}1, 2}(X; \m)$. First fix a bounded $\eta\in C(X)$. For each $n$, the linear map $T_n: L^1(X; \m_n)\to \R$ defined by $T_n(h):=\int_X h\eta \,\d \m_n$ is clearly bounded. Also, using \cite[Remark 5.2.11 and Proposition 5.2.14 iii)]{GPLecture},
   \begin{align}\label{eqn: inner prod bound}
       \left\lvert T_n(\langle DP^n_tf_n, DP^n_sf_n\rangle)\right\rvert
       &=\left\lvert\int_X\langle DP^n_tf_n, DP^n_sf_n\rangle\,\eta\, \d\m_n\right\rvert\notag\\
       &\leq \sup \lvert \eta\rvert \lVert \lvert DP^n_t f_n\rvert\rVert_{L^2(X; \m_n)}\lVert \lvert DP^n_t f_n\rvert\rVert_{L^2(X; \m_n)}\notag\\
       &\leq \frac{\sup \lvert \eta\rvert \lVert  f_n\rVert_{L^2(X; \m_n)}^2}{2\sqrt{ts}}
       \leq \frac{\sup \lvert \eta\rvert \lVert  \tilde{f}_n\rVert_{L^2(X; \m_n)}^2}{2\sqrt{ts}},
   \end{align}
   thus recalling that $\supp{\tilde{\varphi}}$ is compact yields that the maps 
   \begin{align*}
       s&\mapsto T_n\left(\int_0^\infty\langle DP^n_tf_n, DP^n_sf_n\rangle {\tilde{\varphi}}(t){\tilde{\varphi}}(s)\d t\right),\\
       t&\mapsto T_n\left(\langle DP^n_tf_n, DP^n_sf_n\rangle {\tilde{\varphi}}(t){\tilde{\varphi}}(s)\right),\quad s>0\text{ fixed,}
   \end{align*}
   belong to $L^1(]\,0,\, +\infty\,[; \d t)$. 
   Thus, using Hille's theorem twice along with Lemma~\ref{lem: grad squared rep},
   \begin{align}\label{eqn: mollified integral rep}
       \int_X \lvert DP^n_{\tilde{\varphi}} f_n\rvert^2\, \eta\, \d \m_n
   &=T_n\left(\int_0^\infty\int_0^\infty \langle DP^n_tf_n, DP^n_sf_n\rangle {\tilde{\varphi}}(t){\tilde{\varphi}}(s)\d t\d s\right)\notag\\
   &=\int_0^\infty T_n\left(\int_0^\infty \langle DP^n_tf_n, DP^n_sf_n\rangle {\tilde{\varphi}}(t){\tilde{\varphi}}(s)\d t\right)\d s\notag\\
   &=\int_0^\infty \int_0^\infty T_n\left(\langle DP^n_tf_n, DP^n_sf_n\rangle {\tilde{\varphi}}(t){\tilde{\varphi}}(s)\right)\d t\d s\\
   &=\int_0^\infty \int_0^\infty \int_X\langle DP^n_tf_n, DP^n_sf_n\rangle\eta\, \d\m_n {\tilde{\varphi}}(t){\tilde{\varphi}}(s)\d t\d s.\notag
   \end{align}
   Now fix $0<s\leq t<t_0^{-1}$, then by \cite[Corollary 1.5.5 (b)]{AmbrosioHonda17} the sequence $P^n_{t-s}f_n$ $W^{\hspace{0.03cm}1,2}$-strongly, hence $L^2$-strongly converges to $P_{t-s}P_{2t_0}f$, thus using \cite[Proposition 1.3.3. (b)]{AmbrosioHonda17}, the sequences $P^n_{t-s}f_n\pm f_n$ $L^2$-strongly converge to $P_{t-s}P_{2t_0}f\pm P_{2t_0}f$. Another application of \cite[Corollary 1.5.5]{AmbrosioHonda17} then yields that the sequences $P^n_tf_n\pm P^n_sf_n=P^n_s(P^n_{t-s}f_n\pm f_n)$ $W^{\hspace{0.03cm}1,2}$-strongly converge to $P_tP_{2t_0}f\pm P_sP_{2t_0}f=P_s(P_{t-s}P_{2t_0}f\pm P_{2t_0}f)$. Thus by \cite[Theorem 1.5.7 (c)]{AmbrosioHonda17}, we find that the sequence $\lvert D(P^n_tf_n\pm P^n_sf_n)\rvert^2$ $L^1$-strongly converges to $\lvert D(P_tP_{2t_0}f\pm P_sP_{2t_0}f)\rvert^2$, which from the definition is equivalent to $\lvert D(P^n_tf_n\pm P^n_sf_n)\rvert$ $L^2$-strongly converging to $\lvert D(P_tP_{2t_0}f\pm P_sP_{2t_0}f)\rvert$. In particular, taking $\eta\equiv 1$ yields
    \begin{align*}
      &\lim_{n\to \infty} \int_X\langle DP^n_tf_n, DP^n_sf_n\rangle\, \d\m_n {\tilde{\varphi}}(t){\tilde{\varphi}}(s)\\
      &=\frac{1}{4}\lim_{n\to \infty} \int_X\left(\lvert D(P^n_tf_n+ P^n_sf_n)\rvert^2-\lvert D(P^n_tf_n-P^n_sf_n)\rvert^2\right)\, \d\m_n {\tilde{\varphi}}(t){\tilde{\varphi}}(s)\\
      &=\frac{1}{4}\int_X\left(\lvert D(P_tP_{2t_0}f+ P_sP_{2t_0}f)\rvert^2-\lvert D(P_tP_{2t_0}f-P_sP_{2t_0}f)\rvert^2\right)\,\, \d\m\,{\tilde{\varphi}}(t){\tilde{\varphi}}(s).
   \end{align*}
   Additionally, by \eqref{eqn: uniform sup bound} we may apply \cite[Proposition 1.3.3. (e)]{AmbrosioHonda17} to see that $\lvert D(P^n_tf_n\pm P^n_sf_n)\rvert^2$ $L^2$-strongly converges to $\lvert D(P_tP_{2t_0}f\pm P_sP_{2t_0}f)\rvert^2$. 
   At this point, if $\eta\in C_{\rm bs}(X)$, we may apply \cite[(6.6)]{GigliMondinoSavare} with the choice $\zeta(y, r)=\eta(y)\lvert r\rvert^2$ to obtain
   \begin{align*}
      &\lim_{n\to \infty} \int_X\langle DP^n_tf_n, DP^n_sf_n\rangle\eta\, \d\m_n {\tilde{\varphi}}(t){\tilde{\varphi}}(s)\\
      &=\frac{1}{4}\lim_{n\to \infty} \int_X\left(\lvert D(P^n_tf_n+ P^n_sf_n)\rvert^2-\lvert D(P^n_tf_n-P^n_sf_n)\rvert^2\right)\eta\, \d\m_n {\tilde{\varphi}}(t){\tilde{\varphi}}(s)\\
      &=\frac{1}{4}\int_X\left(\lvert D(P_tP_{2t_0}f+ P_sP_{2t_0}f)\rvert^2-\lvert D(P_tP_{2t_0}f-P_sP_{2t_0}f)\rvert^2\right)\,\eta\, \d\m\,{\tilde{\varphi}}(t){\tilde{\varphi}}(s)\\
      &= \int_X\langle DP_tP_{2t_0}f, DP_sP_{2t_0}f\rangle\eta\, \d\m {\tilde{\varphi}}(t){\tilde{\varphi}}(s),
   \end{align*}
   whenever $t\geq s>0$; since the roles of $s$ and $t$ are symmetric, the claim holds for all $t$, $s>0$. Thus by \eqref{eqn: inner prod bound} we may apply dominated convergence in \eqref{eqn: mollified integral rep} and use Lemma~\ref{lem: grad squared rep} to see that for any $\eta\in C_{\rm bs}(X)$,
   \begin{align*}
      \lim_{n\to\infty} \int_X \lvert DP^n_{\tilde{\varphi}} f_n\rvert^2\, \eta\, \d \m_n
      =\int_X \lvert DP_{\tilde{\varphi}} P_{2t_0}f\rvert^2\, \eta\, \d \m.
   \end{align*}
   Next, using the dual characterization
   \begin{align*}
       \lVert  g\rVert_{L^2(X; \m_n)}
       &=\sup\left\{\left.\int_X gh \d\m_n\;\right|\; h\in L^2(X; \m_n),\ \lVert h\rVert_{L^2(X; \m_n)}\leq 1 \right\}\\
       &=\sup\left\{\left.\int_X gh \d\m_n\;\right|\; h\in L^\infty(X; \m_n)\cap L^2(X; \m_n),\ \lVert h\rVert_{L^2(X; \m_n)}\leq 1 \right\},
   \end{align*}
   we can see that $g\mapsto \lVert g\rVert_{L^2(X; \m_n)}$  is lower semicontinuous as a (possibly infinite valued) function on $L^1(X; \m_n)$, and is convex. Also by their strong measurability, the maps as in~\eqref{eqn: measurable maps} are Borel measurable as maps from $[0, +\infty[$ to $L^1(X; \m_n)$. Thus by applying Jensen's inequality for Bochner integrals twice, (\cite[Theorem 3]{Vesely17} with the choices $X=C=L^1(X; \m_n)$, $\Sigma$ as the Borel $\sigma$-algebra of $L^1(X; \m_n)$, $\Omega=[0, +\infty[$ with $\mathcal{A}$ as the Borel $\sigma$-algebra on $\Omega$, and $\mu=\left(\int_0^\infty\tilde{\varphi}\right)^{-1}\tilde\varphi dt$),  we obtain 
   \begin{align*}
       \lVert \lvert DP_{\tilde{\varphi}}^nf_n\rvert^2\rVert_{L^2(X; \m_n)}
       &=\left\lVert \int_0^\infty \int_0^\infty \langle DP^n_tf_n, DP^n_sf_n\rangle\, {\tilde{\varphi}}(t){\tilde{\varphi}}(s)\d t\d s\right\rVert
       _{L^2(X; \m_n)}\\
       &\leq \int_0^\infty \int_0^\infty \left\lVert \langle DP^n_tf_n, DP^n_sf_n\rangle\right\rVert
       _{L^2(X; \m_n)} {\tilde{\varphi}}(t){\tilde{\varphi}}(s)\d t\d s\\
       &\leq \left( \int_0^\infty \left\lVert \lvert DP^n_tf_n\rvert\right\rVert
       _{L^4(X; \m_n)} {\tilde{\varphi}}(t)\d t\right)^2,
   \end{align*}
   which is bounded uniformly in $n$ by \eqref{eqn: uniform grad bound}; combined with the above argument this proves that $\lvert DP_{\tilde{\varphi}}^nf_n\rvert^2$ $L^2$-weakly converges to $\lvert DP_{\tilde{\varphi}} P_{2t_0}f\rvert^2$. At this point, by an abuse of notation we will identify $P^n_t$ with its counterpart on $(M_n, g_n)$ and continue to write $f_n$ for its pullback under $\iota_n$. If $\nabla^n$ is the covariant derivative/gradient on $(M_n, g_n)$, for any $x\in M_n$ and vector $v\in T_xM_n$ we have, at $x$,
   \begin{align*}
       D\langle DP^n_t f_n, DP^n_s f_n\rangle_{g_n}(v)
       &=D\langle \nabla^nP^n_t f_n, \nabla^nP^n_s f_n\rangle_{g_n}(v)\\
       &=\langle \nabla^n_v\nabla^nP^n_t f_n, \nabla^nP^n_s f_n\rangle_{g_n}+\langle \nabla^nP^n_t f_n, \nabla^n_v\nabla^nP^n_s f_n\rangle_{g_n}\\
       &=\Hess_n P^n_t f_n(v, \nabla^nP^n_s f_n)+\Hess_n P^n_s f_n(v, \nabla^nP^n_t f_n),
   \end{align*}
   thus
   \begin{align*}
       \lvert D\langle DP^n_t f_n, DP^n_s f_n\rangle_{g_n}\rvert_{g_n}^2
       &\leq (\lvert\Hess_n P^n_t f_n\rvert_{\rm HS} \lvert \nabla^nP^n_s f_n\rvert_{g_n}+\lvert\Hess_n P^n_s f_n\rvert_{\rm HS} \lvert \nabla^nP^n_t f_n\rvert_{g_n})^2.
   \end{align*}
   Fix $x\in M_n$, by Assumption~\ref{assumption} we can apply \cite[Corollary 2.2]{Sung18} with $\rho=\rho_n(x):=\min(1, {\sf d}_{g_n}(x, \partial M_n))$ and $T=2^kt_0$ where $k\in \N$ runs from $0$ to $k_{t_0}$ where $k_{t_0}$ is such that $2^{k_{t_0}-1}t_0\leq t_0^{-1}<2^{k_{t_0}}t_0$, and combine with~\eqref{eqn: uniform sup bound} and~\eqref{eqn: uniform grad bound} to obtain that for all $s$, $t\in [t_0, t_0^{-1}]$,
\begin{align*}
    &\lvert\Hess_n P^n_t f_n(x)\rvert_{\rm HS} \lvert \nabla^nP^n_s f_n(x)\rvert_{g_n}\\
    &\leq C(K_0, t_0, N)(1+\rho_n(x)^{\frac{N^2-4N-4}{2(N+1)}})^{\frac{N+1}{N}}\sup_{(\tilde{s}, y)\in [0, 2t_0^{-1}]\times \supp [\m_n]} \lvert P^n_{\tilde{s}}f_n(y)\rvert_{g_n}\lvert \nabla^nP^n_s f_n(x)\rvert_{g_n}\\
    &\leq C(K_0, t_0, N, \sup_{n\in \N}\lVert \tilde{f}_n\rVert_{L^2(X; \m_n)})\sup_{y\in M_n}\Vol_{g_n}(B_{\sqrt{t_0}}(y))^{-1/2}(1+\rho_n(x)^{\frac{N^2-4N-4}{2N}}).
\end{align*}
Then recalling Assumption~\ref{assumption}~\eqref{assumption: distance integral bound} and~\eqref{assumption: volume lower bound} yields that
\begin{align*}
    \sup_{n\in \N}\lVert D\langle DP^n_t f_n, DP^n_s f_n\rangle_{g_n}\rVert_{L^2(\Vol_{g_n})}<\infty.
\end{align*}
Thus by dominated convergence combined with Lemma~\ref{lem: grad squared rep} we see that 
\begin{align}\label{eqn: hess uniform bound}
\sup_{n\in \N}\sup_{y\in M_n}\lvert D\lvert DP^n_{\tilde{\varphi}} f_n(y)\rvert_{g_n}^2\rvert_{g_n}<\infty,
\end{align}
in particular (after pushing forward under $\iota_n$) we may square and integrate with respect to $\m_n$, finishing the claim that $\lvert DP^n_{\tilde{\varphi}} f_n\rvert^2$ $W^{\hspace{0.03cm}1, 2}$-weakly converges to $\lvert DP_{\tilde{\varphi}} P_{2t_0}f\rvert^2$.

Now fix $x\in X$, then since $\m(X)<\infty$ we must have $\m(\partial B_r(x))=0$ for all but at most countably many $r>0$. Thus for such $r>0$ by the Portmanteau theorem we have $\lim_{n\to\infty}\m_n(B_r(x))= \m(B_r(x))$, then by \cite[Lemma 1.5.8 (1.31)]{AmbrosioHonda17} we obtain
\begin{align*}
    &\m(B_r(x))^{-1}\int_{B_r(x)}\lvert D\lvert DP_{\tilde{\varphi}} P_{2t_0}f\rvert^2\rvert^2 \d\m
    \leq \varliminf_{n\to\infty}\left(\m_n(B_r(x))^{-1}\int_{B_r(x)}\lvert D\lvert DP^n_{\tilde{\varphi}} f_n\rvert^2\rvert^2 \d\m_n\right)\\  
    &=\varliminf_{n\to\infty}\left(\Vol_{g_n}(B_r(\iota_n^{-1}(x)))^{-1}\int_{B_r(\iota_n^{-1}(x))}\lvert D\lvert DP^n_{\tilde{\varphi}} f_n\rvert^2_{g_n}\rvert^2_{g_n} \d\Vol_{g_n}\right).
\end{align*}
By \eqref{eqn: hess uniform bound} and the Lebesgue differentiation theorem on $(X, {\sf d}, m)$, taking $r\to 0$ along such admissible $r$ we see that $\lvert D\lvert DP_{\tilde{\varphi}} P_{2t_0}f\rvert^2\rvert\in L^\infty(X; \m)$. Finally, note from the choice of $t_0$ that
\begin{align*}
    P_{\tilde{\varphi}}P_{2t_0}f
    =\int_0^\infty P_{t+2t_0}f \varphi(t+2t_0)dt
    =\int_{2t_0}^\infty P_sf \varphi(s)ds
    =\int_0^\infty P_sf \varphi(s)ds
    =P_\varphi f,
\end{align*}
in order to finish the proof.
\end{pf}
Since $\m(X)<\infty$, the above immediately yields the following corollary.
\begin{cor}
    Under Assumption~\ref{assumption}, for any $\varphi\in C_c^\infty(]\,0, \,+\infty\,[)$ and $f\in L^2(X;\m)$, we have $P_\varphi f\in H^{\hspace{0.03cm}2, p}_\ast(X)$ for all $p\in [1, +\infty]$.
\end{cor}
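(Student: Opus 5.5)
The plan is to show directly that $P_\varphi f$ belongs to $W^{\hspace{0.03cm}2,p}_\ast(X)\cap{\rm Test}^\infty(X)$. Since $H^{\hspace{0.03cm}2,p}_\ast(X)$ is by definition the closure of this intersection, it contains it, and the corollary follows. Throughout, $\m(X)<\infty$ from Assumption~\ref{assumption} is used to pass from $L^\infty$ to $L^p$ for every finite $p$. The statement writes $p\in[1,+\infty]$, but the space $W^{\hspace{0.03cm}2,p}_\ast(X)$ was only defined for $p<\infty$, so I treat $p\in[1,+\infty[$. I take $\varphi$ nonnegative, as in Proposition~\ref{prop: lipschitz grad bound}; for general real $\varphi\in C^\infty_c$, I split $\varphi=\varphi^+-\varphi^-$ after a smooth decomposition (any $\varphi\in C_c^\infty$ is a difference of two nonnegative $C_c^\infty$ functions) and use linearity together with the vector-space structure from Proposition~\ref{prop: Banach space}.

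\textbf{Step 1: $P_\varphi f$ is a test function.} First I check that $P_\varphi f\in{\rm Test}^\infty(X)$. This should follow from the standard regularizing properties of the heat flow on ${\sf RCD}(K,N)$ spaces with $\m(X)<\infty$ (the mollified heat flow facts recalled before Lemma~\ref{lem: grad squared rep}, and the appendix).
\begin{itemize}
\item The $L^\infty$ bound on $P_tf$ for $t\in\supp\varphi$ comes from the heat kernel bound~\eqref{eqn: uniform sup bound} in the limit, or from $L^2\to L^\infty$ ultracontractivity.
\item The gradient $|DP_\varphi f|$ is bounded by Bakry--\'Emery.
\item The Laplacian is $\Delta P_\varphi f=-\int P_sf\,\varphi'(s)\,\d s$, which is bounded and lies in $W^{1,2}$.
\end{itemize}
Finiteness of $\m$ then places $P_\varphi f$ and $|DP_\varphi f|$ in $L^p(X)$ for every $p$, so $P_\varphi f\in W^{1,p}(X)$.

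\textbf{Step 2: an admissible upper gradient.} I take $G:=|DP_\varphi f|$, which is a $p$-weak upper gradient of $P_\varphi f$ for every $p$. This is where independence from $p$ enters, via Remark~\ref{rem: RCD prop}(3); for $p=1$ I argue directly that a bounded minimal $2$-weak upper gradient is a $1$-weak upper gradient, since $\infty$-test plans are $2$-test plans. By Step 1, $G\in L^\infty\subset L^p$.

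\textbf{Step 3: $G\in W^{1,p}(X)$.} This is the main point, and it is exactly Proposition~\ref{prop: lipschitz grad bound}: $|D|DP_\varphi f||\in L^\infty(X;\m)\subset L^p(X;\m)$. There is one subtlety. The proposition controls the gradient of $|DP_\varphi f|$, while its proof really bounds the gradient of $|DP_\varphi f|^2$. I will use the proposition as stated; if needed, I pass from the square to the function itself via $G=\sqrt{G^2}$ and the chain rule on $\{G>0\}$, where $|DG|=|DG^2|/(2G)$. I also note that $|DG|=0$ a.e. on $\{G=0\}$ by locality.

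With Steps 1--3, we have $G\in\mathcal{UG}^{\,p}(P_\varphi f)\cap W^{1,p}(X)$. Hence $P_\varphi f\in W^{\hspace{0.03cm}2,p}_\ast(X)\cap{\rm Test}^\infty(X)\subset H^{\hspace{0.03cm}2,p}_\ast(X)$. The only real obstacle is the upper-gradient bookkeeping across exponents, in particular at $p=1$. I expect that to be routine given the lattice and chain-rule arguments already used in Lemma~\ref{lem:compositeUG}.
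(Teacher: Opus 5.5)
Your proposal is correct and follows the paper's argument. The paper deduces the corollary immediately from Proposition~\ref{prop: lipschitz grad bound} together with $\m(X)<\infty$, giving $P_\varphi f\in W^{\hspace{0.03cm}2,p}_\ast(X)\cap{\rm Test}^\infty(X)\subset H^{\hspace{0.03cm}2,p}_\ast(X)$. Your extra bookkeeping (membership in ${\rm Test}^\infty(X)$, the exponents, writing a signed $\varphi$ as a difference of nonnegative ones, discarding $p=\infty$) only makes explicit what the paper leaves implicit.

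One caution: your fallback of recovering $\lvert DG\rvert$ from $\lvert DG^2\rvert$ via $\lvert DG\rvert=\lvert DG^2\rvert/(2G)$ on $\{G>0\}$ would not work. It gives no control where $G$ is small but positive, so you genuinely need the proposition in the form stated, or a Kato-type bound $\lvert D\lvert DP_\varphi f\rvert\rvert\leq\lvert\Hess P_\varphi f\rvert_{\rm HS}$ combined with a Hessian estimate.
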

\appendix
\begin{ack}
{\rm The authors would like to thank Shouhei Honda for various discussions, in particular for bringing their attention to the possibility of second order Sobolev spaces only containing constant functions.

JK was supported in part by National Science Foundation grant DMS-2246606. Part of this paper was written while JK was visiting the Department of Applied Mathematics at Fukuoka University, and he would like to thank them for their hospitality. KK was supported in part by JSPS Grant-in-Aid for Scientific Research (S) (No. 22H04942, No. 25K24482) and by fund (No.:197004) from the Central Research Institute of Fukuoka University.
}
\end{ack}

\section{Properties of {${\rm Test}^\infty(X)$}}\label{sec: test functions}
Throughout this section, we fix an $\mathsf{RCD}(K,N)$-space 
$(X,\mathsf{d},\m)$ and prove some properties about the space ${\rm Test}^\infty(X)$. 

We will denote by $(P_t)_{t\geq0}$ the $\m$-symmetric semigroup on $L^2(X)$ associated with $(\mathscr{E}, D(\mathscr{E}))$, and call $(P_t)_{t\geq0}$ the \emph{heat flow}. Since $(X,{\sf d},\m)$ is an $\mathsf{RCD}(K,\infty)$-space, the following Bakry--\'Emery estimate (see \cite[Proposition~3.1]{GigliHan}) is known to hold:
\begin{align*}
\lvert DP_tf\rvert\leq e^{-Kt}P_t\lvert Df\rvert\quad \m\text{-a.e.}\quad \text{for} \quad f\in W^{\hspace{0.03cm}1,p}(X).
\end{align*}
Combining with \cite[Proposition 5.2.14 iii)]{GPLecture},
we have for any $f\in W^{\hspace{0.03cm}1,p}(X)$,
\begin{align}\label{eqn: P_tf W1,p est}
\lVert P_tf \lVert_{W^{\hspace{0.03cm}1,p}(X)}^p&=\lVert P_tf \lVert_{L^p(X)}^p+\lVert DP_tf \lVert_{L^p(X)}^p\notag\\
&\leq \lVert P_tf \lVert_{L^p(X)}^p+e^{-pKt}\lVert P_t\lvert Df\rvert\lVert_{L^p(X)}^p\notag\\
&\leq \lVert f\lVert_{L^p(X)}^p+e^{-pKt}\lVert \lvert Df\rvert\lVert_{L^p(X)}^p\\
&\leq\max\{1,e^{-pKt}\}\lVert f\lVert_{W^{\hspace{0.03cm}1,p}(X)}^p,\notag
\end{align}
thus $P_tf\in W^{\hspace{0.03cm}1,p}(X)$. Additionally, combining \cite[Proposition 5.2.14]{GPLecture} and \cite[Proposition 2.2]{Shigekawa97} we see $P_t$ is a strongly continuous semigroup on $L^p(X)$ for any $p\in [1, +\infty[$, that is, for any $f\in L^p(X)$,
\begin{align}\label{eqn: P_t strongly cont}
    \lim_{t\searrow 0}\lVert P_tf-f\rVert_{L^p(X)}=0.
\end{align}

We first need the following functional analysis lemma. 

\begin{lem}\label{lem:Dense}
Let $B_1, B_2$ be Banach spaces such that $B_1$ is continuously embedded into $B_2$, and 
$B_1^*, B_2^*$ their topological dual Banach spaces, respectively.    
Suppose also that $B_2$ is reflexive. Then $B_2^*$ is densely embedded into $B_1^*$.  
\end{lem}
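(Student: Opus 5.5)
The natural route is to work with the adjoint of the embedding. Let $\iota\colon B_1\to B_2$ be the continuous injective linear embedding, with $\lVert \iota x\rVert_{B_2}\le C\lVert x\rVert_{B_1}$. The candidate map $B_2^*\to B_1^*$ is the adjoint (restriction) $\iota^*\varphi:=\varphi\circ\iota$. It is linear and satisfies $\lVert\iota^*\varphi\rVert_{B_1^*}\le C\lVert\varphi\rVert_{B_2^*}$, so it is continuous. The statement then has two parts. The first is that $\iota^*$ is injective, which is what makes it an embedding. The second is that $\iota^*(B_2^*)$ is norm dense in $B_1^*$.

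For injectivity, I would use Hahn--Banach. If $\iota^*\varphi=0$, then $\varphi$ vanishes on $\iota(B_1)$, hence on $\overline{\iota(B_1)}$. So $\iota^*$ is injective exactly when $\iota(B_1)$ is dense in $B_2$. I will therefore read ``embedded'' in the statement as including this density, which is the standard meaning in the Sobolev-type situations where the lemma is applied, e.g.\ $W^{1,p}\hookrightarrow L^p$. Without it, $\iota^*$ has nonzero kernel and only the density claim below makes sense.

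For density, the plan is again a Hahn--Banach argument. Suppose $\overline{\iota^*(B_2^*)}\ne B_1^*$. Then there is $\Lambda\in B_1^{**}$ with $\Lambda\ne0$ and $\Lambda(\iota^*\varphi)=0$ for every $\varphi\in B_2^*$. This says exactly that $\iota^{**}\Lambda=0$ in $B_2^{**}$. Reflexivity of $B_2$ identifies $B_2^{**}$ with $B_2$, so $\iota^{**}\Lambda$ is an honest element $y\in B_2$, and $y=0$. To reach a contradiction I must show that $\iota^{**}\Lambda=0$ forces $\Lambda=0$, i.e.\ that $\iota^{**}$ is injective on $B_1^{**}$.

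This is the main obstacle: injectivity of $\iota^{**}$. On the canonical copy of $B_1$ inside $B_1^{**}$, $\iota^{**}$ coincides with $\iota$ and is injective. So if $\Lambda$ comes from some $x\in B_1$, then $\iota x=0$ gives $x=0$, a contradiction, and we are done. For a general $\Lambda$, I would first try Goldstine's theorem. Choose a bounded net $x_\alpha\in B_1$ with $x_\alpha\to\Lambda$ weak$^*$. Then $\iota x_\alpha$ is bounded in the reflexive space $B_2$, so a subnet converges weakly to $\iota^{**}\Lambda=0$. From this I would try to upgrade to $\Lambda=0$ by testing $\Lambda$ against functionals of the form $\iota^*\varphi$, using their weak$^*$ density in $B_1^*$; that weak$^*$ density follows from injectivity of $\iota$ by the bipolar theorem.

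I expect this upgrade from weak$^*$ density to norm density to be the genuinely delicate point. If it does not go through directly, the fallback is to verify that in every application in this appendix the relevant $\Lambda$ is represented by an element of $B_1$. There $B_1=W^{1,p}(X)$ with the Clarkson inequality of Section~3 available, so the argument closes along the lines above.
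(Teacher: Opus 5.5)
Your reduction is the paper's: by Hahn--Banach, norm density of $\iota^*(B_2^*)$ in $B_1^*$ is equivalent to injectivity of $\iota^{**}\colon B_1^{**}\to B_2^{**}$. The paper finishes with ``$x\in B_1^{**}\subset B_2^{**}=B_2$'', which simply asserts the injectivity you singled out as the crux.

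Your way through does not work. An element $\Lambda\in B_1^{**}$ is only norm-continuous on $B_1^*$. So vanishing on the weak$^*$-dense subspace $\iota^*(B_2^*)$ forces $\Lambda=0$ when $\Lambda$ is weak$^*$-continuous, i.e.\ $\Lambda\in B_1$, but not in general. The weak convergence $\iota x_\alpha\rightharpoonup 0$ from Goldstine just restates $\iota^{**}\Lambda=0$ and adds nothing.

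No cleverer argument can close this step, because the lemma is false as stated. Take $B_1=\ell^1$, $B_2=\ell^2$ and $\iota$ the inclusion. It is continuous and injective with dense range, and $\ell^2$ is reflexive. Then $\iota^*$ is the inclusion $\ell^2\hookrightarrow\ell^\infty$, whose norm closure is $c_0\neq\ell^\infty$. Equivalently, any nonzero $\Lambda\in(\ell^\infty)^*$ vanishing on $c_0$ lies in $\ker\iota^{**}$. An example is any weak$^*$ cluster point in $(\ell^\infty)^*$ of $x_n=\frac1n\sum_{k\le n}e_k\in\ell^1$: it satisfies $\Lambda(\mathbf{1})=1$, although $\lVert\iota x_n\rVert_{\ell^2}=n^{-1/2}\to0$. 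So there is a genuine gap in your proposal, but it is a gap in the statement and in the paper's proof, not one a better argument can fill.

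The right repair is to assume $B_1$ reflexive; reflexivity of $B_2$ is then not needed at all. In that case $B_1^{**}=B_1$ and $\iota^{**}$ is $\iota$ followed by the canonical injection $B_2\to B_2^{**}$, so your ``canonical copy'' case is the whole proof. Equivalently, the weak$^*$ and weak topologies on $B_1^*$ coincide, and by Mazur the weak$^*$ density you get from the bipolar theorem is norm density.

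This hypothesis holds in the only use of the lemma, $B_1=W^{1,p}(X)\hookrightarrow B_2=L^p(X)$ with $p\in\,]\,1,\,+\infty\,[$. There $W^{1,p}(X)$ is uniformly convex (\cite[Proposition~4.4]{GigliNobili}, or your Clarkson inequality), hence reflexive by Milman--Pettis. So your fallback is correct, but state it as a hypothesis on $B_1$, not as a property of the particular $\Lambda$: Hahn--Banach produces that $\Lambda$, and the application does not control it.

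Your remark that injectivity of $B_2^*\to B_1^*$ requires $\iota(B_1)$ to be dense in $B_2$ is also correct, and the paper glosses over it. The appendix only uses density of the range, so this does not affect the application.
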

\begin{pf}
Denote by $\lVert \cdot\rVert_{B_i}$ the norm of $B_i$ ($i=1,2$). Since $\|x\|_{B_2}\leq\|x\|_{B_1}$ for $x\in B_1$, 
we easily see $(B_2^*,\lVert \cdot\rVert_{B_2^*})$ is continuously embedded into $(B_1^*,\lVert \cdot\rVert_{B_1^*})$ with 
$\|\ell\|_{B_1^*}\leq\|\ell\|_{B_2^*}$ for $\ell\in B_2^*$. Applying this procedure again, 
$(B_1^{**},\lVert \cdot\rVert_{B_1^{**}})$ is also continuously embedded into $(B_2^{**},\lVert \cdot\rVert_{B_2^{**}})$ 
with $\|x\|_{B_2^{**}}\leq\|x\|_{B_1^{**}}$ for $x\in B_1^{**}$.
To prove the denseness of 
$(B_2^*,\lVert \cdot\rVert_{B_2^*})$ in $(B_1^*,\lVert \cdot\rVert_{B_1^*})$, it suffices to show the following: 
if $x\in B_1^{**}$ satisfies that $x(\ell)=0$ for any $\ell\in B_2^*$, then $x=0$ (see \cite[Chapter III, Corollary 6.14]{Conway:FunctionalAnal}). 
This is true because $x\in B_1^{**}\subset B_2^{**}=B_2$. 
\end{pf}
By \cite[Proposition~4.4]{GigliNobili}, it is known that $(W^{\hspace{0.03cm}1,p}(X),\lVert \cdot \lVert_{W^{\hspace{0.03cm}1,p}(X)})$ 
is a uniformly convex Banach space for $p\in\,]1,+\,\infty\,[$, hence in particular it is reflexive. Thus we can apply Lemma~\ref{lem:Dense} to find that $L^p(X)^*$ is densely embedded into $W^{\hspace{0.03cm}1,p}(X)^*$, hence
for each $\ell\in W^{\hspace{0.03cm}1,p}(X)^*$, there exists a sequence $\{\ell_n\}$ of $L^p(X)^*$ such that 
$\|\ell_n|_{W^{\hspace{0.03cm}1,p}(X)}-\ell\|_{W^{1,p}(X)^*}\to 0$ as $n\to\infty$. 
Fix $f\in W^{1,p}(X)$, then by~\eqref{eqn: P_t strongly cont} we see that for each $n\in\mathbb{N}$, $\ell_n(P_tf)\to \ell_n(f)$ as $t\to 0$.
Thus by \eqref{eqn: P_tf W1,p est} we have
\begin{align*}
|\ell(P_tf-f)|&\leq|(\ell-\ell_n|_{W^{\hspace{0.03cm}1,p}(X)})(P_tf)|+|\ell_n(P_tf-f)|+|(\ell_n|_{W^{\hspace{0.03cm}1,p}(X)}-\ell)(f)|\\
&\leq \|\ell-\ell_n|_{W^{\hspace{0.03cm}1,p}(X)}\|_{W^{1,p}(X)^*}(\|P_tf\|_{W^{\hspace{0.03cm}1,p}(X)}+\lVert f\rVert_{W^{\hspace{0.03cm}1,p}(X)})+|\ell_n(P_tf-f)|\\
&\leq 2\|\ell-\ell_n|_{W^{\hspace{0.03cm}1,p}(X)}\|_{W^{\hspace{0.03cm}1,p}(X)^*}\max\{1,e^{-Kt}\}\lVert f\rVert_{W^{\hspace{0.03cm}1,p}(X)}+|\ell_n(P_tf-f)|,
\end{align*}
and we obtain the $W^{\hspace{0.03cm}1,p}$-weak convergence of $\{P_tf\}_{t>0}$ to $f$ as $t\to0$. 
A similar argument proves the continuity of $s\mapsto P_sf$ on $[\,0,\,+\infty\,[$ with respect to 
$W^{\hspace{0.03cm}1,p}$-weak convergence. By~\cite[Lemma A.1]{GPLecture} the space $W^{\hspace{0.03cm}1,p}(X)$ is separable, thus by the $W^{\hspace{0.03cm}1,p}$-weak continuity shown above and Pettis' theorem (\cite[Chapter II.1 Theorem 2]{DiestelUhl77}), we can conclude that the $W^{\hspace{0.03cm}1,p}(X)$-valued function $s\mapsto P_sf$ is strongly measurable. Then if $\varphi\in C_c^{\infty}(]\,0,\,+\infty\,[)_+$, using \eqref{eqn: P_tf W1,p est} we obtain
\begin{align*}
\int_0^{\infty}\|P_sf\|_{W^{\hspace{0.03cm}1,p}(X)}\varphi(s)\d s&\leq \int_0^{\infty}\max\{1,e^{-Ks}\}\lVert f\rVert_{W^{\hspace{0.03cm}1,p}(X)}\varphi(s)\d s\\
&=\lVert f\rVert_{W^{\hspace{0.03cm}1,p}(X)}\int_0^{\infty}\max\{1,e^{-Ks}\}\varphi(s)\d s<\infty,
\end{align*}
which implies the Bochner integrability of $s\mapsto P_sf$ with respect to $\varphi(s)\d s$. 
The \emph{mollified heat flow of $f\in W^{\hspace{0.03cm}1,p}(X)$ (with respect to $\varphi$)} is thus defined by the $W^{\hspace{0.03cm}1,p}(X)$-valued Bochner integral 
\begin{align}\label{eqn: mollified heat flow}
P_{\varphi}f:=\int_0^{\infty}P_sf\varphi(s)\d s. 
\end{align}
Our definition here differs from that of \cite[Proposition 5.2.18]{GPLecture}, which is defined for $f\in L^2(X)\cap L^p(X)$. However, if $f\in L^2(X)\cap W^{\hspace{0.03cm}1,p}(X)$, the map $s\mapsto P_sf$ will be strongly measurable as an $L^p(X)$-valued map, hence $P_\varphi f$ will be the same object as defined in \cite[Proposition 5.2.18]{GPLecture}, thus by an abuse of notation we continue to use the same notation and terminology.

With this in mind, fix $f\in W^{\hspace{0.03cm}1,p}(X)\cap W^{\hspace{0.03cm}1,2}(X)\cap L^{\infty}(X)$, and let $\varphi_k\in C_c^{\infty}(\,]\,0,\,k^{-1}\,[\,)_+$ be such that $\int_0^{\infty}\varphi_k(s)\d s=1$ for each $k\in \N$. By considering positive and negative parts and using 
\cite[(6.9)]{GPLecture}, we have 
$P_{\varphi_k}f\in {\rm Test}^{\infty}(X)\cap W^{\hspace{0.03cm}1,p}(X)$. Now by~\eqref{eqn: P_t strongly cont}, recalling that we have strong measurability of $s\mapsto P_sf$ as an $L^p(X)$-valued map, using \cite[Chapter II.2 Theorem 4 (ii)]{DiestelUhl77} we have
\begin{align*}
\|P_{\varphi_k}f-f\|_{L^p(X)}&=\left\|\int_0^{\infty}P_sf\varphi_k(s)\d s-\int_0^{\infty}f\varphi_k(x)\d s \right\|_{L^p(X)}\\
&\leq\int_0^{k^{-1}}\|P_sf-f\|_{L^p(X)}\varphi_k(s)\d s
\leq \sup_{s\in [0, k^{-1}]}\lVert P_sf-f\rVert_{L^p(X)}\\
&\hspace{1cm}\to 0\quad\text{ as }\quad k\to\infty.
\end{align*} 
Thus, $\{P_{\varphi_k}f\}_{k\in \N}$ converges to $f$ in $L^p(X)$, and in particular 
weakly converges in $L^p(X)$.  
Applying the same argument as before~\eqref{eqn: mollified heat flow} based on Lemma~\ref{lem:Dense}, we can conclude that 
$\{P_{\varphi_k}f\}_{k\in \N}$ $W^{\hspace{0.03cm}1,p}$-weakly converges to $f$. We now apply Kakutani's theorem (see \cite{Kakutani}) on the extension of the Banach-Saks theorem to uniformly convex Banach spaces, hence we may pass to (a not-relabeled) subsequence such that the Ces{\`a}ro means of $\{P_{\varphi_k}f\}_{k\in \N}$ converge to $f$
in $W^{\hspace{0.03cm}1,p}(X)$. Thus, any element in 
$ W^{\hspace{0.03cm}1,p}(X) \cap W^{\hspace{0.03cm}1,2}(X)\cap L^{\infty}(X)$ can be $W^{\hspace{0.03cm}1,p}$-approximated by a sequence in  
${\rm Test}^{\infty}(X)\cap W^{\hspace{0.03cm}1,p}(X)$, in particular 
\begin{align}\label{eqn: test density}
    {\rm Test}^{\infty}(X)\cap W^{\hspace{0.03cm}1,p}(X)\text{ is }\lVert \cdot\rVert_{W^{\hspace{0.03cm}1,p}(X)}\text{-dense in }W^{1,p}(X)\cap W^{\hspace{0.03cm}1,2}(X)\cap L^{\infty}(X),
\end{align} 
whenever $p\in ]1, +\infty[$.

Next by utilizing \cite[Lemma 6.7]{AmbrosioMondinoSavare16}, we can find suitable cut-off functions in ${\rm Test}^{\infty}(X)_{\rm bs}$.
\begin{lem}\label{lem: localizedTest}
Suppose that $(X,{\sf d},\m)$ is an $\mathsf{RCD}(K,N)$-space with $N\in\,[\,1,\,+\infty\,[$ and fix $x_0\in X$. 
Then there exists a sequence $\{\zeta_{\ell}\}_{\ell\in \N}\subset {\rm Test}^{\infty}(X)_{\rm bs}$ such that $0\leq\zeta_{\ell}\leq1$ on $X$, 
$\zeta_{\ell}=1$ on $B_{\ell}(x_0)$, $\zeta_{\ell}=0$ on $B_{\ell+1}(x_0)^c$, and 
$\sup_{\ell\in\mathbb{N}}{\rm Lip}(\zeta_{\ell})<\infty$. 
\end{lem}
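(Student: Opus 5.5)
The plan is to invoke \cite[Lemma 6.7]{AmbrosioMondinoSavare16}, which in an $\mathsf{RCD}(K,N)$-space (indeed $\mathsf{RCD}(K,\infty)$) supplies, for each pair of concentric balls $B_R(x_0)\subset B_{R'}(x_0)$ with $R'>R$, a function $\chi\in{\rm Test}^\infty(X)$ with $0\le\chi\le1$, $\chi\equiv1$ on $B_R(x_0)$, ${\rm supp}\,\chi\subset B_{R'}(x_0)$, and with $\lip(\chi)$ and $\lVert\Delta\chi\rVert_{L^\infty}$ bounded by a constant depending only on $K$, $N$ and the gap $R'-R$ (not on $R$). We apply it with $R=\ell$ and $R'=\ell+1$ for each $\ell\in\N$ and set $\zeta_\ell:=\chi$. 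Since the gap is always $1$, the bound $\sup_\ell\lip(\zeta_\ell)<\infty$ comes straight from the lemma.

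The membership $\zeta_\ell\in{\rm Test}^\infty(X)_{\rm bs}$ then needs only a short check. Boundedness of ${\rm supp}[\zeta_\ell]$ holds because it sits inside $\overline{B_{\ell+1}(x_0)}$. The remaining conditions in Definition~\ref{def:TestFunc} are $\zeta_\ell\in D(\Delta)\cap L^\infty(X)$, $\lvert D\zeta_\ell\rvert\in L^\infty(X)$, and $\Delta\zeta_\ell\in W^{1,2}(X)\cap L^\infty(X)$. Properness of $X$ (Remark~\ref{rem: RCD prop}) and $\m$ being boundedly finite make $\zeta_\ell$, $\lvert D\zeta_\ell\rvert$ and $\Delta\zeta_\ell$ all lie in $L^2$, because they vanish off a ball of finite measure.

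The main obstacle is the point where the literature statement does not exactly match our requirements. Either the vanishing or the value $1$ may be stated on slightly different sets, or $\Delta\chi\in W^{1,2}$ may not be stated explicitly. We handle the first issue by applying the lemma to $B_\ell(x_0)$ and $B_{\ell+1/2}(x_0)$. Then $\zeta_\ell=0$ on $B_{\ell+1}(x_0)^c$ with room to spare, and the gap $1/2$ is still independent of $\ell$. For the second, if needed we replace $\chi$ by $\psi\circ P_\varepsilon\chi$. Here $P_\varepsilon$ is a short heat flow and $\psi$ is a smooth function, constant near $0$ and near $1$, that restores the exact plateau values. Since ${\rm Test}^\infty(X)$ is stable under composition with smooth functions vanishing at $0$ (by the chain rule together with the calculus in \cite[Section 6.1]{GPLecture}), this keeps the function in the test class. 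The Lipschitz constant stays controlled by $\sup\lvert\psi'\rvert\,e^{-K\varepsilon}\lip(\chi)$ via the Bakry--\'Emery estimate, and the plateaus survive because $\lVert P_\varepsilon\chi-\chi\rVert_{L^\infty}$ is small uniformly in $\ell$, given the uniform Lipschitz bound and the heat kernel Gaussian estimates.
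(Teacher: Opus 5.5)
\textbf{There is a gap in your first paragraph, and it sits at the one point of the lemma that is not already in the literature.} The cut-off lemma in \cite{AmbrosioMondinoSavare16} is a qualitative existence statement: for a compact set inside an open set it produces a test cut-off with values in $[0,1]$, equal to $1$ near the compact set and supported in the open set. It gives no control of $\lip(\chi)$, in particular nothing depending only on $K$, $N$ and $R'-R$ and independent of $R$. So $\sup_\ell\lip(\zeta_\ell)<\infty$ does not come ``straight from the lemma''; this is exactly why the paper has to open up the \emph{proof} of that lemma rather than quote its statement. Your fallback $\psi\circ P_\varepsilon\chi$ does not close the gap. Both the bound $\sup\lvert\psi'\rvert e^{-K\varepsilon}\lip(\chi)$ and the uniform-in-$\ell$ smallness of $\lVert P_\varepsilon\chi-\chi\rVert_{L^\infty}$ that you invoke already assume a uniform bound on $\lip(\chi)$.

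The repair uses your own tools, applied to an explicit input instead of to the output of the black box. This is what the paper does: it runs the construction of \cite{AmbrosioMondinoSavare16} on $\eta_\ell$ and reads off the uniform Lipschitz bound from $\sup_\ell\lip(\eta_\ell)\le 1$.
\begin{enumerate}
\item Start from $\eta_\ell:=(\ell+1-{\sf d}(\cdot,x_0))^+\land 1$. It equals $1$ on $B_\ell(x_0)$, vanishes off $B_{\ell+1}(x_0)$, and satisfies $\lip(\eta_\ell)\le 1$ for every $\ell$.
\item Regularize with a mollified heat flow $P_\varphi$, where $\varphi\in C_c^\infty(]\,0,\varepsilon\,[)_+$ and $\int\varphi=1$. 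You need the mollified flow rather than the plain $P_\varepsilon\eta_\ell$ to guarantee a bounded Laplacian: $\Delta P_\varphi\eta_\ell=-\int_0^\infty P_s\eta_\ell\,\varphi'(s)\,\d s\in L^\infty(X)$.
\item Compose with a smooth $\psi$ that vanishes on $]-\infty,1/3]$ and equals $1$ on $[2/3,+\infty[$, and set $\zeta_\ell:=\psi\circ P_\varphi\eta_\ell$.
\item The Bakry--\'Emery estimate gives $\lvert D\zeta_\ell\rvert\le\sup\lvert\psi'\rvert\max\{1,e^{-K\varepsilon}\}$ $\m$-a.e. The Sobolev-to-Lipschitz property \cite{AGS_Riem} turns this into a bound on $\lip(\zeta_\ell)$ that is uniform in $\ell$.
\item The Gaussian bounds of \cite{JiangLiZhang16}, combined with Bishop--Gromov, give $\sup_x\int{\sf d}(x,y)\,p_s(x,y)\,\d\m(y)\le C(K,N)\sqrt{s}$ for $s\le 1$. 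Hence $\lVert P_\varphi\eta_\ell-\eta_\ell\rVert_{L^\infty}\le C(K,N)\sqrt{\varepsilon}<1/3$ for some $\varepsilon$ independent of $\ell$. This keeps the plateau $\zeta_\ell=1$ on $B_\ell(x_0)$ and the vanishing off $B_{\ell+1}(x_0)$.
\end{enumerate}
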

\begin{pf}
    For $\ell\in \N$, define $\eta_{\ell}(x):=(\ell+1-{\sf d}(x,x_0))^+\land 1\in \lip(X)$, satisfying  
$\eta_{\ell}=1$ on $B_{\ell}(x_0)$ and $\eta_{\ell}=0$ on $B_{\ell+1}(x_0)^c$. Thus applying the construction in the proof of \cite[Lemma 6.7]{AmbrosioMondinoSavare16} we obtain the desired sequence $\{\zeta_\ell\}_{\ell\in \N}$; note that the construction along with the Sobolev-to-Lipschitz property (\cite[Theorem 6.2]{AGS_Riem}) yields $\zeta_\ell\in \lip(X)$, with $\sup_{\ell\in\mathbb{N}}
\lip(\zeta_{\ell})\leq 1$ following from the fact that $\sup_{\ell\in\mathbb{N}}
\lip(\eta_{\ell})\leq 1$.
\end{pf}
Finally we are ready to prove our desired density result.
\begin{lem}\label{lem:denseness}
Let $(X,{\sf d},\m)$ be an $\mathsf{RCD}(K,N)$-space with $p\in \,]1, \,+\infty\,[$ and $N\in\,[1,\,+\infty\,[$. 
Then ${\rm Test}^{\infty}(X)_{\rm bs}$ {\rm(}hence ${\rm Test}(X)\cap W^{\hspace{0.03cm}1,p}(X)${\rm)} is a $W^{\hspace{0.03cm}1,p}$-dense subspace 
of $W^{\hspace{0.03cm}1,p}(X)\cap W^{\hspace{0.03cm}1,2}(X)$. If $p\in[\,2,\,+\infty\,[$, then ${\rm Test}^{\infty}(X)_{\rm bs}$ {\rm(}hence ${\rm Test}(X)\cap W^{\hspace{0.03cm}1,p}(X)${\rm)} is a $W^{\hspace{0.03cm}1,p}$-dense subspace 
of $W^{\hspace{0.03cm}1,p}(X)$. 
\end{lem}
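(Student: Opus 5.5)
The plan has two stages: first reduce to bounded, compactly supported functions, and then apply the density result \eqref{eqn: test density} together with a cut-off by the functions of Lemma~\ref{lem: localizedTest}.

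\textbf{Stage 1: reduction to bounded functions with bounded support.} Fix $f\in W^{1,p}(X)\cap W^{1,2}(X)$, or $f\in W^{1,p}(X)$ alone when $p\ge2$.

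\emph{Truncation.} Set $f_M:=(-M)\vee f\wedge M$. By Corollary~\ref{lem:compositeUG*} (applied with $p$ and with $2$), $f_M$ stays in the same space and $\lvert Df_M\rvert\le \mathds 1_{\{\lvert f\rvert<M\}}\lvert Df\rvert$. Since also $\lvert f-f_M\rvert\le\lvert f\rvert\mathds 1_{\{\lvert f\rvert> M\}}$, dominated convergence gives $f_M\to f$ in $W^{1,p}(X)$.

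\emph{Cut-off.} Multiply by $\zeta_\ell$ from Lemma~\ref{lem: localizedTest}. The Leibniz rule gives
\begin{align*}
\lvert D(f-\zeta_\ell f)\rvert\le (1-\zeta_\ell)\lvert Df\rvert+\sup_{\ell}\lip(\zeta_\ell)\,\lvert f\rvert\,\mathds 1_{X\setminus B_\ell(x_0)} .
\end{align*}
Hence $\zeta_\ell f\to f$ in $W^{1,p}(X)$ by dominated convergence. The uniform Lipschitz bound on $\zeta_\ell$ is exactly what is needed here.

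\emph{The case $p\ge2$ without the $W^{1,2}$ assumption.} A bounded function with bounded support lying in $W^{1,p}(X)$ automatically lies in $W^{1,2}(X)$, because $\m$ of bounded sets is finite and Hölder's inequality gives $L^p\subset L^2$ on bounded sets. So after truncation and cut-off we are inside $W^{1,p}\cap W^{1,2}\cap L^\infty$ with bounded support in both cases.

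\textbf{Stage 2: approximation by ${\rm Test}^\infty(X)_{\rm bs}$.} Take $g$ bounded with support in $B_{\ell}(x_0)$.

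\emph{Density step.} By \eqref{eqn: test density} there are $h_k\in{\rm Test}^\infty(X)\cap W^{1,p}(X)$ with $h_k\to g$ in $W^{1,p}(X)$.

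\emph{Localization.} Then $\zeta_{\ell}h_k\in{\rm Test}^\infty(X)_{\rm bs}$, because ${\rm Test}^\infty(X)$ is an algebra. Since $\zeta_\ell g=g$, we need
\begin{align*}
\lVert \zeta_\ell(h_k-g)\rVert_{W^{1,p}}\le C(1+\lip(\zeta_\ell))\lVert h_k-g\rVert_{W^{1,p}}\to0 .
\end{align*}
This again uses the Leibniz rule and $0\le\zeta_\ell\le1$.

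A diagonal argument over $M$, $\ell$, $k$ concludes. The parenthetical claims follow since ${\rm Test}^\infty(X)_{\rm bs}\subset{\rm Test}(X)\cap W^{1,p}(X)$.

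\textbf{Main obstacle.} The delicate point is the validity of the Leibniz rule $\lvert D(\zeta u)\rvert\le \zeta\lvert Du\rvert+\lvert u\rvert\lip(\zeta)$ for $p$-weak upper gradients with $p\ne2$. I would derive it directly from Definition~\ref{def:Sobolev} along test plans, in the same way as the chain rule was handled in Lemma~\ref{lem:compositeUG}, or cite \cite{GigliNobili}.

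A second concern is that \eqref{eqn: test density} needs $g\in W^{1,2}$. In the $p<2$ case this is part of the hypothesis, and it is preserved by truncation and cut-off.
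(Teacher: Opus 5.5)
Your proposal is correct and follows essentially the same route as the paper: truncation, the density statement \eqref{eqn: test density}, the cut-offs $\zeta_\ell$ of Lemma~\ref{lem: localizedTest} together with the algebra property of ${\rm Test}^\infty(X)$, and, for $p\ge 2$, a preliminary cut-off to land in $W^{1,p}(X)_{\rm bs}\subset W^{1,2}(X)$. The only difference is the order of operations, which is equally valid: the paper approximates first and then uses $\zeta_\ell h\to h$ as $\ell\to\infty$ for test functions $h$, while you localize first and multiply the approximants by a fixed $\zeta_\ell$; note also that the inclusion $W^{1,p}(X)_{\rm bs}\subset W^{1,2}(X)$ for $p\ge2$ needs, besides H\"older's inequality, the fact that every $2$-test plan is a $q$-test plan for $q=p/(p-1)\le 2$, so that $p$-weak upper gradients are also $2$-weak upper gradients.
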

\begin{pf}
First suppose $p\in \,]1, \,+\infty\,[$. If $f\in W^{\hspace{0.03cm}1,p}(X)\cap W^{\hspace{0.03cm}1, 2}(X)$ and we set 
$f^k:=(-k)\lor f\land k$ for $k\in \N$, since 
\begin{align*}
\lvert f^k-f\rvert&\leq 2\lvert f\rvert,\qquad
    \lvert D(f^k-f)\rvert\leq 2\lvert Df\rvert,
\end{align*}
by dominated convergence we see $f^k\to f$ in $\lVert \cdot\rVert_{W^{\hspace{0.03cm}1,p}(X)}$ as $k\to\infty$; in particular $W^{\hspace{0.03cm}1,p}(X)\cap W^{\hspace{0.03cm}1, 2}(X)\cap L^{\infty}(X)$ is $W^{1, p}$-dense in $W^{\hspace{0.03cm}1,p}(X)\cap W^{\hspace{0.03cm}1, 2}(X)$. Thus by~\eqref{eqn: test density}, we see ${\rm Test}^\infty(X)\cap W^{\hspace{0.03cm}1,p}(X)$ is $W^{\hspace{0.03cm}1,p}$-dense in $W^{\hspace{0.03cm}1,p}(X)\cap W^{\hspace{0.03cm}1, 2}(X)$.

 Next fix $x_0\in X$, and let $\{\zeta_\ell\}_{\ell\in \N}$ be the sequence constructed in Lemma~\ref{lem: localizedTest}. If $f\in W^{\hspace{0.03cm}1,p}(X)$ 
 then since 
 \begin{align*}
 \lvert (\zeta_{\ell}-1)f\rvert&\leq 2\lvert f\rvert,\\
\lvert D(\zeta_{\ell}-1)f\rvert&\leq\lvert \zeta_{\ell}-1\rvert\lvert Df\rvert+ \lvert f\rvert\lvert D\zeta_{\ell}\rvert\leq  \lvert Df\rvert+\lvert f\rvert,
\end{align*}
we see $\{\zeta_\ell f\}_{\ell\in \N}\subset W^{\hspace{0.03cm}1,p}(X)_{\rm bs}$
and since after passing to a subsequence the above implies $
    \lvert (\zeta_{\ell}-1)f\rvert+\lvert D(\zeta_{\ell}-1)f\rvert\to 0$ pointwise 
$\m$-a.e., 
the dominated convergence theorem applies to yield $\zeta_{\ell}f\to f$ in $\lVert \cdot\rVert_{W^{\hspace{0.03cm}1,p}(X)}$. In particular, by taking $f\in {\rm Test}^\infty(X)\cap W^{\hspace{0.03cm}1,p}(X)$, since ${\rm Test}^\infty(X)$ is an algebra by \cite[Theorem 6.1.11]{GPLecture}, this implies that ${\rm Test}^\infty(X)_{\rm bs}$ is $W^{\hspace{0.03cm}1, p}$-dense in ${\rm Test}^\infty(X)\cap W^{\hspace{0.03cm}1,p}(X)$, finishing the proof in this case.

When $p\in\,[\,2,\,+\infty\,[$, the argument above shows $W^{\hspace{0.03cm}1,p}(X)_{\rm bs}\cap W^{\hspace{0.03cm}1, 2}(X)=W^{\hspace{0.03cm}1,p}(X)_{\rm bs}$ is dense in $W^{\hspace{0.03cm}1, p}(X)$. Thus, we immediately find that
${\rm Test}^{\infty}(X)\cap W^{\hspace{0.03cm}1,p}(X)$ is $W^{1,p}$-dense in $(W^{\hspace{0.03cm}1,p}(X),\lVert \cdot\rVert_{W^{\hspace{0.03cm}1,p}(X)})$ when $p\in\, [\,2,\, +\infty\,[$. 
\end{pf}
\bibliography{sobolev.bib}
\bibliographystyle{plain}
\end{document}